\title{Geodesics and admissible-path spaces in Carnot Groups}
\author{A. A. Agrachev}
\author{A. Gentile}
\author{A. Lerario}
\newcommand{\sinkt}{\frac{1}{\sqrt{\pi}}\sin kt}
\newcommand{\coskt}{\frac{1}{\sqrt{\pi}}\cos kt}
\newcommand{\dd}{\Delta^{2}}
\newcommand{\dds}{\left(\Delta^{2}\right)^{*}}
\newcommand{\g}{\mathfrak{g}}
\newcommand{\sod}{\mathfrak{so}\mathnormal{(d)}}
\newcommand{\he}{\textrm{He}}
\newcommand{\R}{\mathbb{R}}
\newcommand{\omegat}{\widehat{\Omega}_{\epsilon p}}
\newcommand{\gammas}{\Gamma_{\mathnormal{k|m_{1},\ldots,m_{r}}}}
\newcommand{\gammasn}{\Gamma_{\mathnormal{k|m_{1},\ldots,m_{r}|\vec{n}}}}
\newcommand{\diag}{\textrm{Diag}}
\newcommand{\ii}{\textrm{i}}
\newcommand{\coker}{\textrm{coker}}
\newcommand{\Id}{\mathbbm{1}}
\newcommand{\xps}{\Omega_p^s}
\newcommand{\lpj}{l_{j}^{+}}
\newcommand{\sgn}{\textrm{sgn}}
\newcommand{\bra}{\left\langle}
\newcommand{\ket}{\right\rangle}
\newtheorem{teo}{Theorem}
\newtheorem*{teoi}{Theorem}
\newtheorem*{propoi}{Proposition}
\newtheorem{teoa}{Theorem}[section]
\newtheorem{propoa}[teoa]{Proposition}
\newtheorem{lemma}[teo]{Lemma}
\newtheorem{coro}[teo]{Corollary}
\newtheorem{propo}[teo]{Proposition}
\theoremstyle{remark} 
\newtheorem{remark}[]{Remark}
\newtheorem{example}[]{Example}
\begin{document}

\maketitle

\begin{abstract}
We study the topology of the space $\Omega_p$ of \emph{admissible} paths between two points $e$ (the origin) and $p$ on a step-two Carnot group $G$:
$$\Omega_p=\{\gamma:I\to G\,|\, \textrm{$\gamma$ admissible, $\gamma(0)=e,\,\gamma(1)=p$}\}.$$ As it turns out, $\Omega_p$ is homotopy equivalent to an infinite dimensional sphere and in particular it is contractible. The \emph{energy} function: 
$$J:\Omega_p\to \mathbb{R}$$
is defined by $J(\gamma)= \frac{1}{2}\int_I\|\dot{\gamma}\|^2$; critical points of this function are sub-Riemannian geodesics between $e$ and $p$. We study the asymptotic of the number of geodesics and the topology of the sublevel sets:
$$\Omega_p^s=\{\gamma\in \Omega_p\, |\, J(\gamma)\leq s\} \quad\textrm{as $s\to \infty$}.$$
If $p$ \emph{is not} a vertical point in $G$, the number of geodesics joining $e$ and $p$ is bounded and the homology of $\Omega_p^s$ stabilizes to zero for $s$ large enough. 

A completely different behavior is experienced for the generic \emph{vertical} $p$. In this case we show that $J$ is a Morse-Bott function: geodesics appear in isolated \emph{families} (critical manifolds), indexed by their energy. Denoting by $l$ the corank of the horizontal distribution on $G$, we prove that:
$$\textrm{Card}\{\textrm{Critical manifolds with energy less than $s$}\}\leq O(s)^l.$$ Despite this evidence, Morse-Bott inequalities $b(\Omega_p^s)\leq O(s)^l$ are far from being sharp and we show that the following stronger estimate holds:
$$b(\Omega_p^s)\leq O(s)^{l-1}.$$
Thus each single Betti number $b_i(\Omega_p^s)$ ($i>0$) becomes eventually zero as $s\to \infty$, but the sum of all of them can possibly increase as fast as $O(s)^{l-1}.$ In the case $l=2$ we show that indeed 
$$b(\Omega_p^s)=\tau(p) s+o(s)\quad \textrm{($l=2$)}.$$
The leading order coefficient $\tau(p)$ can be \emph{analytically} computed using the structure constants of the Lie algebra of $G$.

Using a dilation procedure, reminiscent to the rescaling for Gromov-Hausdorff limits, we interpret these results as giving some local information on the geometry of $G$ (e.g. we derive for $l=2$ the rate of growth of the number of geodesics with bounded energy as $p$ approaches $e$ along a vertical direction).

\end{abstract}

\section{Introduction}
How many geodesics are there between two fixed points on a Riemannian manifold?\\
The classical way to give an answer to this question, due to M. Morse, is to consider the space $\Omega$ of \emph{all} curves (defined on the same interval) joining the two points, together with the function $J:\Omega \to \R$ that associates to each curve $\gamma$ the number $\frac{1}{2}\int_I\|\dot{\gamma}\|^2$ (the \emph{Energy}). Geodesics joining the two points are thus regarded as \emph{critical points} of the Energy, and the celebrated \emph{Morse inequalities} state that, in the generic case, the number of such geodesics with Energy bounded by $s$ is \emph{at least} the number of ``holes'' of the topological space $\{J\leq s\}.$

In this Riemannian framework, if the two points $p_0$ and $p$ are close enough and we only allow curves with energy bounded by a small constant $c>0$, there is only one geodesic joining them and the space of curves connecting $p_0$ with $p$ with energy bounded by $c$ is contractible (bounding the energy ensures that the curves we consider are contained in a small neighborhood of $p_0$). 

In fact Morse inequalities can be used both ways: geometers study the topology of $\{J\leq s\}$ to get information on geodesics, topologists use properties of geodesics in order to understand the path space $\Omega.$ This subject has been widely investigated, see for instance the classical works \cite{Milnor, Bott1, Bott2}

On a \emph{sub-Riemannian} manifold $M$ the situation gets more complicated. Being the velocities of curves constrained to the subbundle $\Delta\subset TM$,  the \emph{admissible}-path space is no longer the same as above and new phenomena can occurr. 

Typically, the space of admissible curves with energy less than $s$ joining $p_0$ and $p$ has finite dimensional homology but its homological
dimension and total Betti number may grow to infinity as $p\to p_0$. We are interested in the asymptotic behavior of these quantities and 
related properties of the space of sub-Riemannian geodesics joining $p_0$ and $p$.

In this paper, we focus on the case of a step-two Carnot group where the situation is well controlled. A step-one Carnot group is just a Euclidean space.
The general step-$k$ Carnot group $G$ is a simply connected step-$k$ nilpotent Lie group equipped with a``dilation''. Such a dilation is a one-paramentic group
of automorphisms that generalizes homotheties of the Euclidean space. A typical example is the group of  $n\times n$ lower triangular matrices with units
on the main diagonal, zeros over the diagonal and dilation
 $\delta_t:\{a_{ij}\}_{i,j=1}^n\mapsto  \{t^{i-j}a_{ij}\}_{i,j=1}^n, t>0$ (Example 1 below is the case $n=3$).

First order elements with respect to the dilation form a vector subspace of the Lie algebra of the group and thus define a left-invariant
vector distribution $\Delta\subset TG$. It is assumed that the first order elements generate the whole Lie algebra (see \cite{Pansu}  and \cite{AgrachevBarilariBoscain} for details
and for the explanation of the role of Carnot groups in sub-Riemannian geometry).

There are no one or two dimensional nonabelian Carnot groups. The only three dimensional nonabelian Carnot group is the Heisenberg group: we discuss its geometry as it will be an enlightening, leading example for the rest of the paper.
\begin{example}[The Heisenberg group]The \emph{Heisenberg group} $G$ is the smooth manifold $\R^3$ with the usual coordinates $(x,y,z)$ and the distribution:
\begin{equation}\label{deltah}\Delta=\textrm{span}\left\{\frac{\partial}{\partial x}-\frac{y}{2}\frac{\partial}{\partial z}, \frac{\partial}{\partial y}+\frac{x}{2}\frac{\partial}{\partial z}\right\}.\end{equation}
This means that one is allowed to move only along curves $\gamma:I=[0, 2\pi]\to G$ whose velocity pointwise belongs to $\Delta$ (these curves are called admissible and they are all defined on the same interval; we pick this interval to be $[0, 2\pi]$ because it will simplify notations later). In fact, for a technical reason, one considers curves whose derivative is defined a.e. and is square integrable, ending up with absolutely continuous curves. The sub-Riemannian structure is given by declaring the above vector fields to be an orthonormal basis.

We assume one of the two points is the origin $e=(0,0,0)$ (the structure is invariant by translation) and call the other $p$; thus we consider:
$$\Omega_p=\{\textrm{admissible curves starting at $e$ and ending at $p$}\}.$$ 
As we will show in a while, there is a natural inclusion:
 $$\Omega_p\hookrightarrow L^2(I, \R^2),$$ and we endow it with the induced topology. Recall that $\Delta_p=\textrm{span}\{X(p), Y(p)\}$, where $X,Y$ are the two vector fields given in (\ref{deltah}); in particular for every admissible curve $\gamma$ we can write $\dot \gamma =u_xX+u_yY$, where $u_x, u_y\in L^2(I).$ This correspondence is one-to-one: because of Cauchy's theorem given $u\in L^2(I, \R^2)$, there is only one solution $\gamma_u$ to the system $\dot \gamma=u, \gamma(0)=e$. Thus we can identify the space of \emph{all} admissible curves starting at $e$ with $L^2(I, \R^2)$ and $\Omega_p$ coincides with the set of curves such that $\gamma_u$ is defined for $t=2\pi$ and $\gamma_u(2\pi)=p$ (thus $u$ represents the ``coordinates'' of $\gamma_u$ and it is usually called its \emph{control}).
 
The energy of an admissible curve is by definition (one half) the square of the $L^2$ norm of its control:
$$J(\gamma_u)=\frac{1}{2}\int_{0}^{2\pi}\|\dot{\gamma}_u(t)\|^2dt=\frac{\|u\|^2}{2}.$$
Critical points of this function restricted to the various $\Omega_p$ (i.e. geodesics between $e$ and $p$) are curves whose projection on the $(x,y)$-plane is an arc of a circle (possibly with infinite radius, i.e. an interval on a straight line); the signed area swept out on the circle by this projection equals the $z$-coordinate of the final point.

We see that if $p$ belongs to the $(x,y)$-plane there is only one geodesic joining it with the origin (this is precisely the segment trough $e$ and $p$), as if we were in a riemannian manifold; if $p$ has both nonzero components in the $(x,y)$ plane and the $z$ axis, the number of geodesics is finite; finally, if $p$ belongs to the $z$-axis there are infinitely many such geodesics (moreover given one we obtain infinitely many others by composing it with a rotation in $\R^3$ along the axis).

The behavior of geodesics ending at a point $(0,0,z)$ (slightly abusing of notation we call this point $z$)  is really new if compared to the classical Riemannian situation and is worth discussing it in more details.

Given a natural number $n\in \mathbb{N}_0$ let us pick a circle in the $(x,y)$-plane of area $|z|/n$ passing through the origin; starting at $e$ and moving clockwise around the circle $n$ times, we obtain  the projection of a geodesic ending at $z$ and whose energy is $2\pi n|z|$; rotating this curve along the $z$-axis of an angle $\theta\in SO(2)\simeq S^1$ gives another such geodesic (thus we have an $S^1$ of geodesics with the same energy and endpoints). Hence geodesics  ending at $z$ (i.e. critical points of $J$ on $\Omega_{z}$) arrange into \emph{infinitely} many \emph{families}, indexed by the values of their energy $\{2\pi n |z|\}_{n\in \mathbb{N}}$, each family being homeomorphic to an $S^1$ in $\Omega_{z}$.

It is not possible, therefore, to study the topology of $\Omega_{z}$ using $f=J|_{\Omega_{z}}$ as a Morse function: simply because its critical points are not isolated. Despite this, as we have noted these critical points appear in nice families and in fact these families are \emph{nondegenerate} in a sense that will be specified later, allowing to use the Morse theory machinery (this generalization is usually called \emph{Morse-Bott} theory).

Before proceeding, let us show how to determine the topology of $\Omega_{z}$, by giving its equations in $L^2(I, \R^2).$ The map that associates  to each $u$ the final point $\gamma_u(2\pi)$ is called the \emph{End-point map}; it is a smooth map and will be denoted by $F:L^2(I, \R^2)\to \R^3.$ A careful analysis of the definition of $\gamma_u$ provides the explicit expression for $F=(F_1, F_2, F_3)$, see (\ref{endpointexplicit}). In particular we can write the equations for $\Omega_{z}$ as:
$$\Omega_{z}=\{F_1(u)=0, F_2(u)=0, F_3(u)=z\}.$$
The first two equations define a linear subspace $H=\{F_1=F_2=0\}$ and restricted to this space $F_3$ is a quadratic form $q=F_3|_H$ with \emph{infinitely many} positive and \emph{infinitely many} negative eigenvalues. In particular $\Omega_{z}$ is homotopy equivalent to an infinite dimensional sphere:
$$\Omega_{z}\sim S^\infty$$
and is contractible.
What kind of information can therefore a Morse theoretical study of $\Omega_{z}$ give? As a starting point, the investigation of critical points of $f=J|_{\Omega_{z}}$ gives information on the structure of geodesics; but we will see there is also more. 

Let us find all the critical manifolds of $f$ with energy bounded by $s$. Given a circle trough the origin of area $|z|/n$, it is the projection of a geodesic between $e$ and $z$ with energy bounded by $s$ if and only if $n\leq\frac{s}{2\pi |z|}.$ Rotating each such geodesic we get a whole critical manifold homeomorphic to $S^1$. In particular:
\begin{equation}\label{critH}\{\textrm{geodesics between $e$ and $z$ with $J\leq s$}\}=\underbrace{S^1\cup\cdots \cup S^1}_{\textrm{$\left\lfloor\frac{s}{2\pi |z|}\right\rfloor$ copies}}\end{equation}

In this context the ``amount'' of geodesics can still be used as a measure of the topology of $\Omega_{z}\cap \{J\leq s\}$, in a form that generalizes Morse inequalities. Specifically, whenever the critical points of $f$ arrange into smooth manifolds (with some nondegeneracy conditions), then the sum of the Betti numbers (i.e. the number of ``holes'') of $\{f\leq s\}$ is bounded by the sum of the Betti numbers of all the critical manifolds with $f\leq s$. The inequalities we get are called \emph{Morse-Bott inequalities}, and in our case they give\footnote{For the rest of the paper, $b(X)$ will indicate the \emph{sum} of the $\mathbb{Z}_2$-Betti numbers of $X$, i.e. $b(X)=\sum_i \textrm{rank}H_i(X; \mathbb{Z}_2)$; this sum might a priori be infinite, depending on the topological space $X$, but in the case of our interest it will always be finite.}
\begin{equation}\label{M-Bh}b(\Omega_{z}\cap\{J\leq s\})\leq b(S^1\cup\cdots \cup S^1)\sim \frac{s}{2\pi |z|}.\end{equation}

Thus, as $s\to \infty$ there are more and more critical manifolds but, surprisingly enough, the above inequality is far from being sharp. One can in fact show that $\Omega_z\cap\{J\leq s\}$ has the homotopy type of a finite-dimensional sphere $S^{d_s}$ in $\Omega_{z}$:
$$\Omega_{z}\cap\{J\leq s\}\sim S^{d_s}\quad \textrm{and}\quad b(\Omega_{z}\cap\{J\leq s\})=2.$$
In fact the dimension $d_s$ of this sphere is increasing at a constant rate linear in $s$ but the sum of its Betti numbers is constant: this shows that in (\ref{M-Bh}) using Morse-Bott inequalities we were overcounting.

Hence the picture is the same as of a family of finite-dimensional spheres ``approaching'' an infinite dimensional one; the dimension of the spheres increases with $s$ and the topology eventually vanishes (i.e. $\lim_{s\to \infty} b_i(\Omega_{z}\cap\{J\leq s\})=0$ for every $i\in \mathbb{N}_0$). Nonetheless there is something that ``persists'' at infinity: it is the \emph{sum} of all Betti numbers of $\Omega_{z}\cap \{J\leq s\}.$ What is the meaning of this invariant number?
\begin{figure}

\scalebox{0.6} 
{
\begin{pspicture}(0,-1.86)(9.821015,1.86)
\psline[linewidth=0.04cm,arrowsize=0.05291667cm 2.0,arrowlength=1.4,arrowinset=0.4]{<-}(0.6610156,1.84)(0.6810156,-1.84)
\psline[linewidth=0.04cm,arrowsize=0.05291667cm 2.0,arrowlength=1.4,arrowinset=0.4]{->}(0.70101565,-1.82)(9.801016,-1.82)
\psline[linewidth=0.04cm](0.6610156,-0.4)(2.0810156,-0.4)
\psline[linewidth=0.04cm](4.881016,-0.4)(6.2610154,-0.4)
\psline[linewidth=0.04cm,linestyle=dashed,dash=0.16cm 0.16cm](2.0810156,-0.4)(2.0810156,-1.76)
\psline[linewidth=0.04cm,linestyle=dashed,dash=0.16cm 0.16cm](4.881016,-0.4)(4.881016,-1.76)
\psline[linewidth=0.04cm,linestyle=dashed,dash=0.16cm 0.16cm](6.2610154,-0.4)(6.2610154,-1.78)
\psline[linewidth=0.04cm,arrowsize=0.05291667cm 2.0,arrowlength=1.4,arrowinset=0.4]{->}(6.6810155,-1.06)(7.861016,-1.06)
\usefont{T1}{ptm}{m}{n}
\rput(2.3924707,1.425){$b_i(\Omega_p\cap\{J\leq s\})$}
\usefont{T1}{ptm}{m}{n}
\rput(9.292471,-1.515){$i$}
\usefont{T1}{ptm}{m}{n}
\rput(7.2224708,-0.795){$s\to \infty$}
\end{pspicture} 
}
\caption{The homology of $\Omega_p\cap\{J\leq s\}$ in the Heisenberg group: the top dimensional homology moves to infinity as $s$ grows and eventually ``disappears''.}
\end{figure}
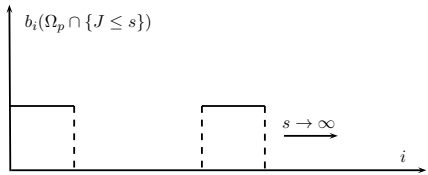

A possible way to answer this question is to consider a local problem  near $e$: assuming $d(e, p_\epsilon)=\epsilon$ how many geodesics are there between $e$ and $p_\epsilon$ with \emph{bounded} energy (say by a constant $c$)? As we have seen, if $p_\epsilon$ is not a verical point, the answer is finitely many (depending on the upper bound on the energy). On the other hand, what happens if $p_\epsilon$ approaches $e$ along the non-riemannian $z$-axis? To answer this question let us take the point:
$$p_{\epsilon}=\left(0, 0, \frac{\epsilon^2}{4\pi} \right) \quad \textrm{such that}\quad d(e, p_\epsilon)=\epsilon.$$ 
Arguing as in (\ref{critH}), we see that the critical manifolds of $J|_{\Omega_{p_\epsilon}}$ with energy bounded by $c$ are $\lfloor\frac{2c}{\epsilon^2}\rfloor$ copies of $S^1$: as we let $p_\epsilon$ closer and closer to $e$ the amount of geodesics keeps on increasing. The equations for $\Omega_{p_\epsilon}\cap \{J\leq c\}$ are:
$$\Omega_{p_\epsilon}\cap \{J\leq c\}=\left\{F_1(u)=F_2(u)=0, \, q(u)=\frac{\epsilon^2}{2\pi},\, \|u\|^2\leq c\right\},$$
and the transformation $u\mapsto u/(\epsilon\sqrt{2})$ gives a homeomorphism between $\Omega_{p_\epsilon}\cap \{J\leq c\}$ and $\Omega_{(0,0,\frac{1}{4\pi})}\cap \{J\leq \frac{2c}{\epsilon^2}\}$. Hence the above discussion applies with $z=(0,0, \frac{1}{4\pi})$ and $s=\frac{2c}{\epsilon^2}$, giving:
$$ \lim_{\epsilon\to 0}b(\Omega_{p_\epsilon}\cap\{J\leq c\})=2.$$
Thus, this persisting number can be interpreted as a local invariant of the Heisenberg group ``in the direction'' of $z$, as we will make more clear later.
 \end{example}
 This example shows that the sub-Riemannian case can be much richer than the riemannian, even at the level of the topology of the set of curves in $\Omega_p$ when $p$ is close to the initial point and the energy is bounded. Let us continue along these lines and ask if we can find,  in the general case, some asymptotic behavior when the distance of the final point become smaller and the energy stays bounded. To this end we need a procedure for taking limit.

 The ``time-one'' picture is the following: we have two points $e$ and $p$ at distance $s$ on a sub-Riemannian manifold $M$, and the space $\Omega_{p}\cap \{J\leq c\}$ of curves reaching a neighborhood of $p$. Now, on a neighborhood $U$ of $e$ there is a well defined family of non-isotropic ``dilations'' centered at $e$ (we assume $p$ is close enough to be in $U$):
$$ \delta_\lambda:U\to U, \quad \lambda>0$$ with the property that $d(e, \delta_\lambda(p))=\lambda d(e, p)$
(for example, in the Heisenberg group this family operates as: $\delta_\lambda(x,y,z)=(\lambda x, \lambda y, \lambda^2 z)).$ 
We are thus led to consider the family of spaces:
\begin{equation}\label{eq:scaling}\Omega_{\delta_{\epsilon}(p)}\cap \{J\leq c\}, \quad \epsilon\to 0\end{equation}
(this is exactly what we did in the Heisenberg case).
This limiting procedure is reminiscent of the one used to construct the so called \emph{sub-Riemannian} tangent space at $e$: as a vector space it is simply $T_eM$, but it comes endowed with much more structure that makes of it a Carnot group. 

\emph{Step-two} Carnot groups are ``tangent spaces'' to a generic sub-Riemannian manifold (the step-two condition needs some assumptions on the corank of $\Delta$ to be generic).\\
Thus, on one hand our results should be interpreted as an ``infinitesimal'' version of a more general local theory (that the authors plan to discuss in a subsequent paper); on the other hand, Carnot groups are sub-Riemannian manifolds by themselves (plenty of literature has been devoted to them) and we believe this study is of autonomous interest.

The questions we will be interested in are: 
\begin{itemize}
\item[(i)] What is the structure of geodesics connecting two points $e$ and $p$ (i.e. critical points of the energy on $\Omega_p$) in a step-two Carnot group? 
\item[(ii)] What is the growth rate of the ``number'' of critical points at infinity? How does the topology of $\Omega_p\cap\{J\leq s\}$ behave as $s\to \infty$? 
\item[(iii)] Is there some invariant that can be captured for $\Omega_{\delta_{\epsilon}(p)}\cap \{J\leq c\}$ when we perform the limit $\epsilon\to 0$?
\end{itemize}

To start with we need to make our definition of a step-two Carnot group more precise. As a differentiable manifold it will simply be $\R^{d+l}$, together with the distribution $\Delta\subset T\R^{d+l}$ generated by the span of the vector fields defined in coordinates $(x,y)$, $x\in \R^d$ and $y\in \R^l$, by:
$$E_i(x,y)=\frac{\partial}{\partial x_i}(x,y)-\frac{1}{2}\sum_{k=1}^l \sum_{j=1}^{d}a_{ij}^kx_j\frac{\partial}{\partial y_k}(x,y), \quad i=1, \ldots, d.$$
The numbers $\{a_{ij}^k\}_{i,j,k}$ come from skew symmetric matrices $A_k=(a_{ij}^k)\in \sod, \,k=1,\ldots, l;$ these matrices span a $l$-dimensional vector space:
$$W=\textrm{span}\{A_1, \ldots, A_l\}\subset \sod.$$

The sub-Riemannian structure is given by declaring the $E_i$ to be an orthonormal basis of $\Delta$. Two different choices of bases for $W$ will produce \emph{isomorphic} Carnot groups; for this reason we will call $W\subset \sod$ the \emph{Carnot group structure}.\\
Admissible curves will be absolutely continuous curve, whose velocities (a.e. defined) are in $\Delta$. As we did for the Heisenberg group, $\Omega_p$ will denote the set of admissible curves joining the origin $e$ with $p$ and there will be a natural inclusion:
$$\Omega_p\hookrightarrow L^2(I, \R^d).$$
In this way the \emph{Energy} of a curve $\gamma_u$ with velocity $u$ will simply be $J(\gamma_u)=\frac{\|u\|^2}{2}.$

With this notation we have the homeomorphism:
\begin{equation}\label{eq:scaling2}
\Omega_{\delta_\epsilon(p)}\cap\{J\leq c\}=\delta_\epsilon(\Omega_p\cap\{J\leq  c/\epsilon^2\}).
\end{equation}
If $p$ is not a vertical point (and $d>l$), then the number of geodesics joining it with the origin is finite; in other words, the energy of geodesics connecting the origin with $p$  is uniformly bounded
and the  homology of the space $\Omega_{\delta_\epsilon(p)}\cap\{J\leq c\}$ stabilizes as $\epsilon\to 0$ (this is not true for a vertical $p$). 
\begin{propoi}If $p$ is not a vertical point and $d>l$, the cohomology of $\Omega_{\delta_\epsilon(p)}\cap\{J\leq c\}$ stabilizes for $\epsilon$ small enough.

\end{propoi}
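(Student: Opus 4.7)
The plan is to use the dilation identity (\ref{eq:scaling2}) to reduce the claim to a statement about sublevels of the energy, and then to conclude by a standard Morse-theoretic deformation argument. By (\ref{eq:scaling2}) the rescaling $u\mapsto u/\epsilon$ gives a homeomorphism
\[
\Omega_{\delta_\epsilon(p)}\cap\{J\leq c\}\;\cong\;\Omega_p\cap\{J\leq c/\epsilon^2\},
\]
so, as $\epsilon\to 0$, we are reduced to asking whether $H^*(\Omega_p^s)$ becomes independent of $s$ for $s$ large enough.

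To establish this I would realize $\Omega_p=F^{-1}(p)$ via the end-point map $F\colon L^2(I,\R^d)\to\R^{d+l}$, whose first $d$ components are linear and last $l$ components are quadratic. Under the hypothesis that $p$ is non-vertical and $d>l$, one checks that $p$ is a regular value of $F$, so $\Omega_p$ is a smooth Hilbert submanifold of $L^2(I,\R^d)$; moreover the critical points of $J|_{\Omega_p}$ form a finite set with uniformly bounded energy, which is precisely the assertion recalled in the paragraph preceding the proposition. Let $s_0$ denote this bound. It then remains to show that $\Omega_p^{s_1}$ deformation retracts onto $\Omega_p^{s_0}$ for every $s_1\ge s_0$: this is the classical Morse deformation lemma on a Hilbert manifold, provided one verifies the Palais-Smale condition for $J|_{\Omega_p}$. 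The latter follows from the Lagrange multiplier form of the gradient: on $\Omega_p$ the critical equation reads $u=dF(u)^*\lambda$, and if $(u_n)$ is a sequence with $\|u_n\|^2$ bounded and $\nabla(J|_{\Omega_p})(u_n)\to 0$, then the associated multipliers $\lambda_n$ are bounded and, since the quadratic components of $F$ are given by compact operators built from the skew matrices $A_k$, a subsequence of $(u_n)$ converges strongly in $L^2$. The deformation retract then follows from the negative-gradient flow on $\Omega_p^{s_1}\setminus\Omega_p^{s_0}$, and stabilization of cohomology is immediate.

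The main obstacle I anticipate is precisely the Palais-Smale verification: $L^2(I,\R^d)$ is not locally compact, so the compactness of critical sequences must be extracted from the structure of $F$, namely that its non-linear part is a quadratic map given by Hilbert-Schmidt (hence compact) operators. A secondary technical point is the verification that $p$ is a regular value of $F$ under the hypotheses $d>l$ and $p$ non-vertical; this is a direct computation on the differential of the linear--quadratic map $F$, but it is essential for $\Omega_p$ to be a Hilbert manifold on which the Morse machinery applies. Once both ingredients are in place the argument proceeds along standard lines.
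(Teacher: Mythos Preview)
Your approach coincides with the paper's: the paper also reduces via the dilation identity to the study of $\Omega_p^s$ as $s\to\infty$, proves that for generic non-vertical $p$ with $d>l$ there are only finitely many geodesics (Proposition~\ref{propo:bound}), and then asserts stabilization as a corollary of Morse--Bott theory. You simply make explicit the two ingredients (regularity of $\Omega_p$, Palais--Smale) that the paper leaves implicit for the non-vertical case; the Palais--Smale argument you sketch is the same compactness-of-$\omega Q$ argument that the paper carries out in detail for vertical endpoints in Theorem~\ref{morsebottpalaissmale}.

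One correction: your claim ``under the hypothesis that $p$ is non-vertical and $d>l$, one checks that $p$ is a regular value of $F$'' is too strong as stated. Regularity of $p$ for the end-point map is \emph{not} a consequence of $p\notin\dd$ alone; abnormal controls (those $u$ for which $Q_1u,\ldots,Q_lu$ are linearly dependent in $L^2$) can have non-vertical endpoints. The paper's body is careful about this: Proposition~\ref{propo:bound} and the subsequent corollary are stated for the \emph{generic} $p\notin\dd$, and the introduction's unqualified formulation should be read the same way. Once genericity is assumed, both the Hilbert-manifold structure and the finiteness of critical points hold, and your deformation argument goes through.
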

Because of this, we are going
to study asymptotics of the homological dimension and total Betti number of $\Omega_{\delta_\epsilon(p)}\cap\{J\leq c\}$ as $\epsilon\to 0$, under the assumption:
\begin{equation}\label{vertical}p\textrm{ is a vertical point}.\end{equation}
Using the $(x,y)$ coordinates as above the ``vertical'' direction is identified with the $y$-space and we denote such space by $\Delta^2\simeq \R^l$; in this way the condition (\ref{vertical}) can be rewritten as $p\in \dd.$

All the above questions can be addressed by studying the Morse-Bott theory of $J|_{\Omega_p}$, by taking appropriate rescalings.
In fact, assuming (\ref{vertical}), we have $\delta_\epsilon(p)=\epsilon^2p$ and the homogeneity of the end-point map on the vertical directions gives the homeomorphism \eqref{eq:scaling2}. In particular the limit \eqref{eq:scaling} can be studied by simply fixing $p$ and letting the energy grow (at a rate $\epsilon^{-2}$).

Our first result concerns the topology of the space $\Omega_p$: first we show that its homotopy type does not depend on $p$: in particular $\Omega_p\sim \Omega_e$ and since the latter is defined by homogeneous equations, then these spaces are contractible. The homotopy equivalence is realized by a simple modification of the construction for ordinary loop spaces. Notice that $\Omega_e$ is a singular space (it is homeomorphic to a cone based at the constant curve $\gamma \equiv e$); despite this, for the generic choice of the Carnot group structure and the final point in the vertical direction, $\Omega_p$ is a smooth (infinite-dimensional) manifold.
\begin{teoi}[The topology of $\Omega_p$]The topological space $\Omega_p$ is homotopy equivalent to $\Omega_e$ and is therefore contractible; moreover for the generic $p\in \dd$ it is a Hilbert manifold.\end{teoi}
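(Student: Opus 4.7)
I would prove the theorem in three steps: contractibility of $\Omega_e$, the homotopy equivalence $\Omega_p\simeq \Omega_e$, and the generic Hilbert manifold structure.

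\textbf{Step 1: Contractibility of $\Omega_e$.} Following the Heisenberg example, I would identify $\Omega_p$ with $F^{-1}(p)\subset L^2(I,\R^d)$, where the endpoint map $F=(F_1,F_2)$ has $F_1$ linear in the control $u$ (the horizontal components) and $F_2$ quadratic (the vertical components). Then $\Omega_e=\{u : F_1(u)=0,\ F_2(u)=0\}$ is cut out by homogeneous equations, so the straight-line homotopy $H(u,t)=tu$, $t\in[0,1]$, stays inside $\Omega_e$ and deformation retracts it onto the zero control $u\equiv 0$.

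\textbf{Step 2: Homotopy equivalence.} Here I would mimic the classical proof that $\Omega_{p_0}X\simeq \Omega_{p_1}X$ for path-connected $X$. Fix an admissible $\gamma_p\colon I\to G$ joining $e$ and $p$; left-invariance of the distribution $\Delta$ (and its closure under sign change) implies that both left translations and time-reversal preserve admissibility. The maps
$$\Phi(\sigma)=\gamma_p\star(p\cdot\sigma)\colon \Omega_e\to\Omega_p,\qquad \Psi(\eta)=\eta\star\bar\gamma_p\colon \Omega_p\to\Omega_e,$$
suitably reparametrized to $I$, are continuous, and the usual ``slide the concatenation point'' argument produces homotopies $\Phi\circ\Psi\simeq \mathrm{id}$ and $\Psi\circ\Phi\simeq\mathrm{id}$; all intermediate paths are admissible because they are built from left translates, reparametrizations, and restrictions of admissible curves.

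\textbf{Step 3: Hilbert manifold structure.} For $p\in\dd$ we have $\Omega_p=\ker F_1\cap F_2^{-1}(p)$, and $\ker F_1$ is a closed Hilbert subspace of $L^2(I,\R^d)$. The regular value theorem in Hilbert space yields the manifold structure whenever $p$ is a regular value of the quadratic map $q:=F_2|_{\ker F_1}\colon \ker F_1\to\R^l$. Writing $q_k(u)=\langle B_k u,u\rangle$ with bounded symmetric operators $B_k$ built from the $A_k$ and Volterra integration, a Lagrange-multiplier calculation identifies the critical set of $q$ as $\{u\in\ker F_1 : \exists\lambda\in\R^l\setminus\{0\},\ \pi_{\ker F_1}(\sum_k\lambda_k B_k u)=0\}$. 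The main obstacle here is that Sard's theorem fails for general smooth maps from infinite-dimensional Hilbert spaces to $\R^l$: to bypass this, one must exploit the quadratic structure, parametrizing the critical set by $\lambda\in\mathbb{RP}^{l-1}$ together with the (typically finite-dimensional) null space of the projected self-adjoint operator $\pi_{\ker F_1}\circ\sum_k\lambda_k B_k$, and verifying that the image of this set under $q$ is a proper semi-analytic subset of $\R^l$. Points $p$ in the complement are then regular values, furnishing the desired Hilbert submanifold structure.
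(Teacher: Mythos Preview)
Your Steps 1 and 2 match the paper's argument essentially verbatim: the paper contracts $\Omega_e$ via the straight-line homotopy (homogeneous equations) and proves $\Omega_p\simeq\Omega_e$ by concatenation with a fixed admissible path $\gamma_0$ and its time-reversal, exactly as you describe.

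Step 3 has the right high-level strategy but a real gap. The null space of $\lambda Q=\sum_k\lambda_k B_k$ on $H=\ker F_1$ is \emph{not} typically finite-dimensional: by the Fourier decomposition $H=\bigoplus_{k\ge 1}T_k$ one has $\lambda Q|_{T_k}=\tfrac{1}{k}\lambda P$, so $\ker(\lambda Q)=\bigoplus_k\ker(\lambda P)$ is infinite-dimensional whenever $\lambda A$ is singular. Hence you cannot parametrize the critical set by a finite-dimensional object and invoke semi-analytic geometry directly. The paper bypasses this with a geometric lemma: if $u$ is critical with multiplier $\omega$, and $\omega$ lies in the rank-$r$ stratum $W_r\subset W$, then differentiating $\omega(t)Pz(t)=0$ along a curve in $W_r$ and pairing with $z$ gives $\eta(q(u_k))=0$ for every $\eta\in T_\omega W_r$, whence $q(u)\in (T_\omega W_r)^\perp$. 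So the critical \emph{values} (not the critical points) are contained in the semialgebraic set $\Sigma_1'=\bigcup_r\bigcup_{\omega\in W_r}(T_\omega W_r)^\perp$, whose dimension is at most $\dim W_r+(l-\dim W_r-1)=l-1$ by homogeneity in $\omega$. This ``normal to the rank stratum'' observation is the missing idea in your sketch; without it the dimension count does not close.
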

In particular, analysis can be performed over $\Omega_p$ and critical points of $J|_{\Omega_p}$
 (which is a smooth function) can be found using the Lagrange multipliers rule. As for Heisenberg, we can write down explicit equations: here the crucial point is that the End-point map sends a curve to $\Delta^2$ if and only if its control has zero mean (i.e. $0=\int_I u\in \R^d$) and the restriction of this map to the space of such controls is \emph{quadratic}.

A careful investigation of the critical points of $f$ shows that they appear in families, i.e. the arrange into critical manifolds. These manifolds are tori $S^1\times\cdots \times S^1$ and are indexed by their Lagrange multipliers. The whole structure can be recovered by $W\subset \sod$ only, as follows.

Consider the infinite union of algebraic sets:
$$\Lambda=\bigcup_{n\in \mathbb{N}}\Lambda_n,\quad \Lambda_n=\{A\in W\, |\, \det(A-in\mathbbm{1})=0\}.$$
Each set $\Lambda_n$ is an hypersurface in $W$: generically it is smooth, but there can be points where $iA$ has multiple integer eigenvalues and the corresponding hypersurface is singular. In fact all these hypersurfaces (except $\Lambda_0$) can be obtained by 
dilations of $\Lambda_1$ (i.e. $\Lambda_n=n\Lambda_1$). Hence $\Lambda$ looks like an ``infinite net'' with two kinds of singularities (they might appear at the same time): one kind comes from the singularities of each $\Lambda_n$ and the other from the intersections $\Lambda_{n_1}\cap\cdots \cap \Lambda_{n_\nu}$, for $\nu\leq l.$

As we will see $\Lambda$ represents the set of all possible Lagrange multipliers, for all possible final points, hence to each point of $\Lambda$ there corresponds a family of geodesics. Notice that, once the basis $\{A_1, \ldots, A_l\}$ of $W$ and coordinates are fixed, the correspondence: 
$$\omega=(\omega_1, \ldots, \omega_l)\mapsto \omega A=\omega_1A_1+\cdots +\omega_l A_l$$
defines a linear isomorphism $(\Delta^2)^*\simeq W$.

The next theorem gives a detailed answer to question (i) above on the structure of geodesics.
\begin{teoi}[The structure of geodesics]Let $u$ be the control associated to a geodesic from $e$ to a point in $\dd$ with Lagrange multiplier $\omega$. Then:
\begin{itemize}
\item[1.] $u(t)=e^{-t \omega A}u_0$ with $u_0=e^{-2\pi \omega A}u_0.$
\item[2.]The final point $q(u)=(q_1(u), \ldots, q_l(u))\in \dd$ of such geodesic is given by:
 $$q_i(u)=\left\langle u_0,L_i(\omega)u_0\right\rangle,\quad L_i(\omega)=\frac{1}{2} \int_{0}^{2\pi}\left(\int_{0}^t e^{\tau \omega A}d\tau\right)A_ie^{-t\omega A}dt.$$
 \item[3.] The energy of such geodesic is given by $J(u)=\omega (F(u))$. 
 \end{itemize}
In addition, for the generic choice of $W\subset \sod$ and of $p\in \dd$:
\begin{itemize}
\item[4.] The set $\Lambda(p)\subset \Lambda$ of Lagrange multipliers for geodesics whose final point is $p$ is a discrete set. 
\item[5.] If $\omega\in \Lambda(p)$ then the integer eigenvalues of the matrix $i\omega A$ are simple and there exist $n_1, \ldots, n_\nu\in \mathbb{N}_0$ such that $\omega$ belongs to $\Lambda_{n_1}\cap\cdots\cap \Lambda_{n_\nu}$ (the number $\nu\leq l$ will be called the number of \emph{resonances} of $\omega$).
\item[6.] If $\omega$ has $\nu$ resonances, the union of all the geodesics from $e$ to $p$ with Lagrange multiplier $\omega$ is a smooth manifold $C_\omega$ homeomorphic to:$$C_\omega \simeq \underbrace{S^1\times \cdots\times S^1}_{\nu \textrm{ times}}.$$
\end{itemize} \end{teoi}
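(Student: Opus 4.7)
The plan is to split the proof into two halves. Parts 1--3 come from Lagrange multipliers applied to the constraint $F(u)=p$, where $F=(F_h,F_v)$ is the end-point map split into horizontal and vertical components; parts 4--6 come from a spectral decomposition of $u_0$ along the eigenplanes of $\omega A$, combined with a transversality argument for the genericity.

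For parts 1--3 I would first record that $F_h(u)=\int_0^{2\pi}u\,dt$ is linear and $F_v^k(u)=-\frac{1}{2}\int_0^{2\pi}\langle u(t),A_kx(t)\rangle\,dt$, where $x(t)=\int_0^t u$. A single integration by parts, using skew-symmetry of the $A_k$, yields $dF_v^k|_u\cdot v=\frac{1}{2}\int_0^{2\pi}\langle A_k(x(2\pi)-2x(t)),v(t)\rangle\,dt$. Writing the Lagrange equation $u=\alpha\cdot dF_h+\omega\cdot dF_v$ in the case $x(2\pi)=0$ (forced by $p\in\dd$) gives $u(t)=\alpha-\omega A\, x(t)$; differentiating produces the linear ODE $\dot u=-\omega A\, u$, hence $u(t)=e^{-t\omega A}u_0$ with $u_0=\alpha$. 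The closure condition $\int_0^{2\pi}e^{-t\omega A}u_0\,dt=0$ then collapses to the periodicity $u_0=e^{-2\pi\omega A}u_0$, which is part 1. Part 2 is a direct substitution of this $u(t)$ into the integral defining $F_v^k$, followed by the change of variable $s=t-\tau$ to bring the kernel into the stated form $L_i(\omega)$. For part 3, orthogonality of $e^{-t\omega A}$ gives $J(u)=\pi\|u_0\|^2$, while $\omega\cdot F_v(u)=\langle u_0,(\sum_k\omega_k L_k)u_0\rangle$; the identity $\sum_k\omega_k L_k=\pi I$ on the periodic subspace is an elementary computation using $\int_0^t e^{\tau\omega A}\,d\tau=(\omega A)^{-1}(e^{t\omega A}-I)$ together with $e^{2\pi\omega A}u_0=u_0$.

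The interesting content is in parts 4--6. Assuming the integer eigenvalues $\pm in_1,\ldots,\pm in_\nu$ of $i\omega A$ are simple, decompose $\R^d=P_1\oplus\cdots\oplus P_\nu\oplus P_0$, where each $P_j$ is the real $2$-plane for the conjugate pair $\pm in_j$ and $P_0$ is the sum of the remaining eigenspaces. The periodicity of part 1 forces $u_0\in P_1\oplus\cdots\oplus P_\nu$. Writing $u_0=\sum_j u_0^{(j)}$ and using that on $[0,2\pi]$ only matched Fourier frequencies $\pm n_j$ integrate to nonzero values, the cross terms in $F_v^k(u)$ between distinct eigenplanes vanish and one obtains $F_v^k(u)=\sum_j c_k^{(j)}\|u_0^{(j)}\|^2$ for real constants $c_k^{(j)}$ depending only on the projection $\pi_jA_k|_{P_j}$. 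The same computation shows that the rotation $R_\theta$ acting on a single plane $P_j$ (and trivially on the others) preserves every $F_v^k$, because $\pi_jA_k|_{P_j}$ is skew-symmetric on a $2$-plane and hence commutes with $R_\theta$. These $\nu$ commuting $S^1$-actions are free on the locus $\{\|u_0^{(j)}\|>0\ \forall j\}$ and preserve the endpoint, yielding $C_\omega\simeq (S^1)^\nu$ (part 6), once one knows by genericity that the radii $r_j=\|u_0^{(j)}\|^2$ are uniquely determined by the $l$ linear equations $\sum_j r_j c_k^{(j)}=p_k$.

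Parts 4 and 5 I would treat by stratified transversality. The set $\Lambda$ decomposes as a countable disjoint union of smooth semi-algebraic strata indexed by which integer eigenvalues of $i\omega A$ appear and whether they are simple; the endpoint equations $\sum_j r_j c^{(j)}(\omega)=p$ define a smooth map from each stratum times $(0,\infty)^\nu$ into $\R^l$. A Sard argument shows that for $(W,p)$ in a residual set the preimage of $p$ avoids all strata of positive codimension (so all integer eigenvalues along $\Lambda(p)$ are simple, which is part 5) and is discrete (part 4). The a priori bound $\|u_0\|^2\le J/\pi$ coming from part 3 confines the argument to finitely many strata on each energy sublevel, which is what makes the countable union manageable. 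The main obstacle is precisely controlling this stratified transversality simultaneously over the infinite net $\Lambda=\bigcup_n\Lambda_n$; once the local, stratum-by-stratum picture is in place, the algebra of parts 1--3 and the eigenplane decomposition underlying part 6 are essentially mechanical.
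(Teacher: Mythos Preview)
Your treatment of parts 1--3 is correct and essentially the same as the paper's (Lemma~\ref{endpoint} and the computation in the proof of Proposition~\ref{propo:bound}). Your argument for part~6 is actually a pleasant alternative to the paper's Lemma~\ref{distinct}: where the paper shows $q(V_j(\omega))$ is a half-line by differentiating along the stratum $\Lambda_{n_j}$ and showing $q(v)\perp T_\omega\Lambda_{n_j}$, you observe directly that the compression $\pi_jA_k|_{P_j}$ is a multiple of $J_2$ on a $2$-plane, which forces $q_k|_{P_j}$ to be a multiple of $\|\cdot\|^2$. Both routes give the torus; yours is more elementary but uses less of the stratified structure that the paper needs elsewhere anyway.

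The genuine gap is in part~5. Your endpoint parameterization $\sum_j r_j\,c^{(j)}(\omega)=p$ with $r_j\in(0,\infty)^\nu$ is only valid on the strata where every integer eigenvalue of $i\omega A$ is \emph{simple}; on a stratum where some integer eigenvalue has multiplicity $m_j\ge 2$, the corresponding real eigenspace is $2m_j$-dimensional and $q$ restricted to it is a genuine quadratic map to $\R^l$, not a single radius. So on the ``bad'' strata your map is not the one you wrote down, and a naive dimension count does not kill them: for instance, with one double integer eigenvalue the stratum in $W$ has codimension $(2^2-1)+1=4$ while the fiber $E(\omega)$ is $4$-dimensional, giving a domain of dimension exactly $l$, which is enough to hit an open set of $p$'s under a generic smooth map. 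What actually makes the image small is a rank bound: on a $2m_j$-dimensional eigenspace the map $v\mapsto q(e^{-t\omega A}v)$ collapses $m_j$ orthogonal circles to points, so its fiberwise rank is at most $m_j$, and combining this with the base dimension gives total rank at most
\[
l-\nu-\sum_{i}(m_i^2-1)-\tfrac{k(k-1)}{2}+\sum_{j\in I}m_j\;<\;l-1
\]
whenever some $m_j\ge 2$. This is the content of the paper's Lemma~\ref{nomultiple}, and it is the step your sketch is missing. The obstacle you name at the end (the countable union over $\Lambda_n$) is in fact harmless---measure-zero sets are stable under countable unions---whereas the rank estimate on multiple-eigenvalue strata is the real work.
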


Before proceeding any further, it is interesting to discuss one more example.
\begin{example}[Corank-two distributions]Let us consider the case of $\R^{d+2}$ with Carnot group structure $W=\textrm{span}\{A_1, A_2\}\subset \sod$ having the above genericity property.\\
Let us consider the curve $\Lambda_1\subset W$ first: since the set of matrices with double eigenvalues has codimension three in $\sod$, $\Lambda_1$ is a smooth curve (with possibly many components). In this case the set $\{\omega \in \Lambda_{1}\, |\, \omega(p)\geq0,\,p\in (T_{\omega}\Lambda_1)^{\perp}\}$
consists of Lagrange multipliers whose critical manifold $C_\omega$ is homeomorphic to $S^1;$ all the other Lagrange multipliers whose associated critical manifold is a circle are obtained multiplying these covectors by a positive natural number:
$$\{\omega \in \Lambda(p)\,|\, C_\omega\simeq S^1\}=\bigcup_{n\in \mathbb{N}_0}\{\omega \in \Lambda_{n}\, |\, \omega(p)\geq0,\,p\in (T_{\omega}\Lambda_1)^{\perp}\}.$$
\begin{center}
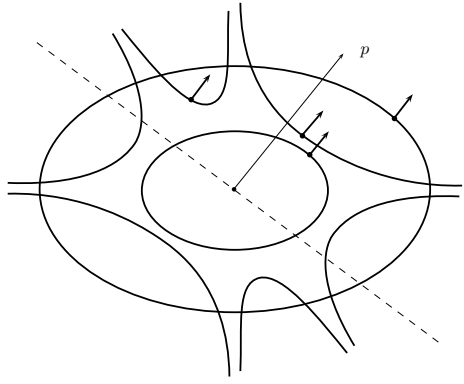
\begin{figure}
\scalebox{0.6} 
{
\begin{pspicture}(0,-4.15)(9.98,4.15)
\psellipse[linewidth=0.04,dimen=outer](4.98,-0.02)(2.06,1.33)
\psellipse[linewidth=0.04,dimen=outer](4.98,0.01)(4.3,2.74)
\psbezier[linewidth=0.04](2.5,3.55)(3.08,2.83)(3.9,1.77)(4.42,1.89)(4.94,2.01)(4.8,2.97)(4.84,3.95)
\psbezier[linewidth=0.04](5.1,4.13)(5.12,2.09)(6.28,1.31)(6.7,1.03)(7.12,0.75)(8.28,0.05)(9.96,0.09)
\psbezier[linewidth=0.04](4.86,-4.13)(4.84,-2.09)(3.68,-1.31)(3.26,-1.03)(2.84,-0.75)(1.68,-0.05)(0.0,-0.09)
\psbezier[linewidth=0.04](2.3,3.45)(3.12,2.09)(2.983359,1.3054222)(2.78,1.01)(2.5766408,0.7145778)(1.96,0.07)(0.0,0.15)
\psbezier[linewidth=0.04](7.6,-3.45)(6.78,-2.09)(6.9166408,-1.3054222)(7.12,-1.01)(7.323359,-0.7145778)(7.94,-0.07)(9.9,-0.15)
\psdots[dotsize=0.1](4.96,0.01)
\psline[linewidth=0.02cm,arrowsize=0.05291667cm 2.0,arrowlength=1.4,arrowinset=0.4]{->}(4.94,-0.01)(7.36,3.01)
\usefont{T1}{ptm}{m}{n}
\rput(7.831455,3.055){$p$}
\psline[linewidth=0.02cm,linestyle=dashed,dash=0.16cm 0.16cm](0.64,3.25)(9.44,-3.37)
\psdots[dotsize=0.12](6.46,1.19)
\psdots[dotsize=0.12](6.62,0.77)
\psdots[dotsize=0.12](8.48,1.57)
\psdots[dotsize=0.12](4.02,1.99)
\psbezier[linewidth=0.04](7.44,-3.61)(6.86,-2.89)(6.04,-1.83)(5.52,-1.95)(5.0,-2.07)(5.14,-3.03)(5.1,-4.01)
\psline[linewidth=0.04cm,arrowsize=0.05291667cm 2.0,arrowlength=1.4,arrowinset=0.4]{->}(8.48,1.59)(8.88,2.11)
\psline[linewidth=0.04cm,arrowsize=0.05291667cm 2.0,arrowlength=1.4,arrowinset=0.4]{->}(6.62,0.77)(7.04,1.29)
\psline[linewidth=0.04cm,arrowsize=0.05291667cm 2.0,arrowlength=1.4,arrowinset=0.4]{->}(6.46,1.21)(6.92,1.75)
\psline[linewidth=0.04cm,arrowsize=0.05291667cm 2.0,arrowlength=1.4,arrowinset=0.4]{->}(4.02,1.99)(4.44,2.55)
\end{pspicture} 
}
\caption{The set $\bigcup_{n\in \mathbb{N}_0}\{\omega \in \Lambda_{n}\, |\, \omega(p)\geq0,\,p\in (T_{\omega}\Lambda_1)^{\perp}\}$}
\end{figure}
\end{center}
Let us now consider two natural numbers $n_1, n_2$ such that $\Lambda_{n_1}\cap \Lambda_{n_2}\neq 0$ and let $\omega$ be a point in this intersection such that $\omega(p)\geq 0.$ Let $E(\omega)=\{v\in \R^d\, |\, e^{2\pi\omega A}v=v\}$ and consider the quadratic forms $q_i|_{\omega}:E(\omega)\to \R$ defined by:
$$q_i|_{\omega}:v\mapsto \langle v, L_i(\omega) v\rangle,\quad i=1,2.$$
Let  $q|_{\omega}:E(\omega)\to \R^2$ be the quadratic \emph{map} whose components are the above $q_i|_{\omega}$. The set of Lagrange multipliers whose associated critical manifold is $S^1\times S^1$ coincides with:
$$\{\omega\in \Lambda(p)\, |\, C_\omega=S^1\times S^1\}=\bigcup_{n_1, n_2\in \mathbb{N}_0}\{\omega \in \Lambda_{n_1}\cap \Lambda_{n_2}\, |\, p\in \textrm{im}(q|_\omega)\}.$$
\begin{center}
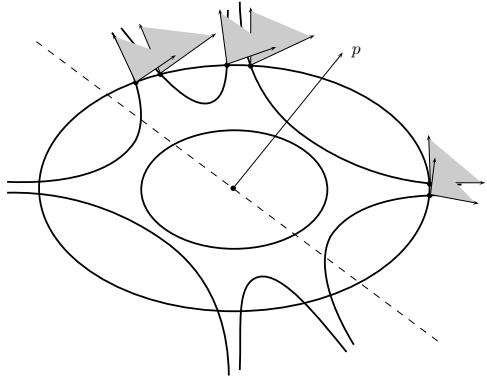
\begin{figure}
\scalebox{0.6} 
{
\begin{pspicture}(0,-4.15)(10.49,4.15)
\definecolor{color4290b}{rgb}{0.8,0.8,0.8}
\psellipse[linewidth=0.04,dimen=outer](4.98,-0.02)(2.06,1.33)
\psellipse[linewidth=0.04,dimen=outer](4.98,0.01)(4.3,2.74)
\psbezier[linewidth=0.04](2.5,3.55)(3.08,2.83)(3.9,1.77)(4.42,1.89)(4.94,2.01)(4.8,2.97)(4.84,3.95)
\psbezier[linewidth=0.04](7.44,-3.61)(6.86,-2.89)(6.04,-1.83)(5.52,-1.95)(5.0,-2.07)(5.14,-3.03)(5.1,-4.01)
\psbezier[linewidth=0.04](5.1,4.13)(5.12,2.09)(6.28,1.31)(6.7,1.03)(7.12,0.75)(8.28,0.05)(9.96,0.09)
\psbezier[linewidth=0.04](4.86,-4.13)(4.84,-2.09)(3.68,-1.31)(3.26,-1.03)(2.84,-0.75)(1.68,-0.05)(0.0,-0.09)
\psbezier[linewidth=0.04](2.3,3.45)(3.12,2.09)(2.983359,1.3054222)(2.78,1.01)(2.5766408,0.7145778)(1.96,0.07)(0.0,0.15)
\psbezier[linewidth=0.04](7.6,-3.45)(6.78,-2.09)(6.9166408,-1.3054222)(7.12,-1.01)(7.323359,-0.7145778)(7.94,-0.07)(9.9,-0.15)
\psdots[dotsize=0.12](4.96,0.01)
\psline[linewidth=0.02cm,arrowsize=0.05291667cm 2.0,arrowlength=1.4,arrowinset=0.4]{->}(4.94,-0.01)(7.36,3.01)
\usefont{T1}{ptm}{m}{n}
\rput(7.651455,3.015){$p$}
\psdots[dotsize=0.12](5.34,2.71)
\psdots[dotsize=0.12](4.82,2.73)
\psdots[dotsize=0.12](3.36,2.53)
\psdots[dotsize=0.12](2.82,2.35)
\psdots[dotsize=0.12](9.26,0.11)
\psdots[dotsize=0.12](9.26,-0.15)
\psline[linewidth=0.02cm,linestyle=dashed,dash=0.16cm 0.16cm](0.64,3.25)(9.44,-3.37)
\psline[linewidth=0.02,arrowsize=0.05291667cm 2.0,arrowlength=1.4,arrowinset=0.4,fillstyle=solid,fillcolor=color4290b]{<->}(5.34,4.01)(5.34,2.73)(6.74,3.39)
\psline[linewidth=0.02,arrowsize=0.05291667cm 2.0,arrowlength=1.4,arrowinset=0.4,fillstyle=solid,fillcolor=color4290b]{<->}(9.24,1.19)(9.2775,0.13)(10.48,0.13)
\psline[linewidth=0.02,arrowsize=0.05291667cm 2.0,arrowlength=1.4,arrowinset=0.4,fillstyle=solid,fillcolor=color4290b]{<->}(9.38,0.69)(9.28,-0.13)(10.34,-0.33)
\psline[linewidth=0.02,arrowsize=0.05291667cm 2.0,arrowlength=1.4,arrowinset=0.4,fillstyle=solid,fillcolor=color4290b]{<->}(2.9,3.65)(3.36,2.55)(4.58,3.43)
\psline[linewidth=0.02,arrowsize=0.05291667cm 2.0,arrowlength=1.4,arrowinset=0.4,fillstyle=solid,fillcolor=color4290b]{<->}(4.66,3.97)(4.82,2.75)(5.9,3.13)
\psline[linewidth=0.02,arrowsize=0.05291667cm 2.0,arrowlength=1.4,arrowinset=0.4,fillstyle=solid,fillcolor=color4290b]{<->}(2.32,3.43)(2.82,2.35)(3.7,2.95)
\end{pspicture} 
}
\caption{The set $\bigcup_{n_1, n_2\in \mathbb{N}_0}\{\omega \in \Lambda_{n_1}\cap \Lambda_{n_2}\, |\, p\in \textrm{im}(q|_\omega)\}.$}
\end{figure}
\end{center}\end{example}
Let us move now to question number (ii) above, on the growth rate of the number of critical manifolds of $J|_{\Omega_p}$ as the energy goes to infinity. As we have seen for the Heisenbger group in (\ref{critH}), once $p$ is fixed the number of critical manifolds with energy bounded by $s$ increases linearly in $s$:
$$\textrm{Card}\{\textrm{critical manifolds of $J|_{\Omega_p}$ such that $J\leq s$}\}\sim O(s)\quad\textrm{(Heisenberg)}.$$
The previous example shows that in the corank two case the critical manifolds of $J|_{\Omega_p}$ can be indexed by points on an infinite two-dimensional net and, naively, it is reasonable to guess that they increase quadratically in $s$. Indeed it is a general fact that this number increases at most as a polynomial in $s$ of degree $l=\textrm{corank}(\Delta)$ (see Figure \ref{fig:lattice}).
\begin{teoi}[The growth rate of the number of critical manifolds]For the generic choice of $W\subset \sod$ and of $p\in \dd$ we have:
\begin{equation}\label{growcrit}\textrm{Card}\{\textrm{critical manifolds of $J|_{\Omega_p}$ such that $J\leq s$}\}\leq O(s)^l,\quad l=\dim(\dd).\end{equation}
\end{teoi}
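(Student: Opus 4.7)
By Theorem 3, each critical manifold of $J|_{\Omega_p}$ is labeled by a unique Lagrange multiplier $\omega\in\Lambda(p)\subset \Lambda\subset W\cong\mathbb{R}^l$, and statement 3 of that theorem gives the energy of the corresponding manifold $C_\omega$ as $J=\omega(F(u))=\omega(p)$. Consequently the left-hand side of \eqref{growcrit} equals
\[
\#\bigl\{\omega\in\Lambda(p)\,:\,\omega(p)\leq s\bigr\},
\]
and our task reduces to bounding this cardinality. The plan is to stratify $\Lambda(p)$ by the number of resonances and count the contribution of each stratum.

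For each $\nu\in\{1,\ldots,l\}$ and each strictly increasing tuple $(n_1,\ldots,n_\nu)\in\mathbb{N}_0^\nu$, I would set
\[
\Lambda^{n_1,\ldots,n_\nu}(p)\;:=\;\Lambda(p)\cap \Lambda_{n_1}\cap\cdots\cap \Lambda_{n_\nu},
\]
so that $\Lambda(p)$ is the disjoint union of the $\Lambda^{n_1,\ldots,n_\nu}(p)$ over all tuples (partitioning points by their exact set of resonances, with $\nu\leq l$ by statement 5 of Theorem 3). Using the dilation identity $\Lambda_n=n\Lambda_1$, any $\omega\in\Lambda_n$ can be written $\omega=n\omega'$ with $\omega'\in\Lambda_1$, so that $\omega(p)=n\,\omega'(p)$. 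A Lagrange multiplier attached to an actual geodesic satisfies $\omega'(p)\geq 0$, and for a generic choice of $p$ and $W$ the discreteness of $\Lambda(p)$ (Theorem 3, part 4) forces the stronger uniform lower bound $\omega'(p)\geq c_0>0$ on the compact set of relevant $\omega'\in\Lambda_1$. Thus every resonance appearing in a critical manifold of energy $\leq s$ obeys $n_i\leq s/c_0=O(s)$.

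Next I would show that $\#\Lambda^{n_1,\ldots,n_\nu}(p)\leq K=K(W,p)$ uniformly in the tuple. Generically, $\Lambda_{n_1}\cap\cdots\cap\Lambda_{n_\nu}$ is an algebraic subvariety of codimension $\nu$ in $W$, while the condition that $p\in\mathrm{im}(q|_\omega)$ (cf.\ part 2 of Theorem 3 and the corank-two example) is an algebraic condition of complementary dimension $l-\nu$ cutting out a finite set. Since the defining equations $\det(A-in_i\mathbbm{1})=0$ are rescalings of $\det(A-i\mathbbm{1})=0$ and have degree in $A$ independent of $n_i$, a Bezout-type estimate yields the required uniform bound $K$. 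Combining with the estimate $n_i=O(s)$ and summing over strata,
\[
\sum_{\nu=1}^l \sum_{\substack{1\leq n_1<\cdots<n_\nu\\ n_\nu\leq s/c_0}}\#\Lambda^{n_1,\ldots,n_\nu}(p)\;\leq\;\sum_{\nu=1}^l K\binom{\lfloor s/c_0\rfloor}{\nu}\;=\;O(s)^l,
\]
which proves the theorem.

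The main obstacle is the uniform constant $K$. While $\bigcap_i\Lambda_{n_i}$ has degree controlled independently of the tuple thanks to $\Lambda_n=n\Lambda_1$, one still has to check that the supplementary algebraic condition $p\in\mathrm{im}(q|_\omega)$ has degree independent of $(n_1,\ldots,n_\nu)$, and that a single genericity condition on $(W,p)$ can be arranged simultaneously for the countable family of strata. This is exactly where the dilation identity pays off, reducing the analysis to the single compact stratification associated to $\Lambda_1$; once this reduction is made rigorous, the remaining counting is essentially lattice-point counting in the slab $\{\omega(p)\leq s\}$.
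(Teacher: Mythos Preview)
Your overall architecture matches the paper's proof (Theorem~\ref{criticalcount}): stratify $\Lambda(p)$ by the resonance tuple $(n_1,\ldots,n_\nu)$, bound each $n_i$ by $O(s)$, bound the number of multipliers per tuple by a constant independent of the tuple, and sum. Two of your steps, however, need substantially more than you give them.

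\textbf{The bound $n_i\leq O(s)$.} Your justification---``the discreteness of $\Lambda(p)$ forces the stronger uniform lower bound $\omega'(p)\geq c_0>0$ on the compact set of relevant $\omega'\in\Lambda_1$''---does not hold up. Discreteness is a local property and says nothing about the asymptotic direction of $\omega$ as $\|\omega\|\to\infty$; a discrete set in $W$ can perfectly well have normalized points accumulating on $p^\perp$. Moreover $\Lambda_1$ is an unbounded algebraic hypersurface, so no ``compact set of relevant $\omega'$'' has been identified. In the paper this step is the nontrivial Lemma~\ref{durazzo}: one supposes a sequence $\omega_n\in\Lambda(p)$ with $\hat\omega_n(p)\to 0$, tracks the limiting eigenspace structure, and derives that $p$ lies in the image of $q$ restricted to $\ker(\lambda A)$ for some limit direction $\lambda$, contradicting the regularity of $p$. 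This is exactly the ``acute cone'' statement $\langle\omega,p\rangle/\|\omega\|\geq 1/c_p$, and it is what makes Proposition~\ref{starcity} (hence $n_i\leq c_p s$) go through.

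\textbf{The uniform constant $K$.} You propose to obtain $K$ by dilating back to $\Lambda_1$ and invoking a B\'ezout-type bound. The paper takes a different route that sidesteps the issue of whether the auxiliary condition ``$p\in\mathrm{im}(q|_\omega)$'' behaves well under dilation: it observes that for every $n$ the polynomial $f_n(\omega)=i^d\det(\omega A-in\mathbbm{1})$ lies in the \emph{fixed} finite-dimensional space $P_{l,d}$ of real degree-$d$ polynomials in $l$ variables, packages the normal-space condition semialgebraically over $P_{l,d}^\nu\times G(l-\nu,l)\times\mathbb{R}^l$, and applies Hardt's semialgebraic triviality to the projection onto $P_{l,d}^\nu$. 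Since only finitely many fiber types occur, the number of solutions is bounded by some $\beta_\nu$ independently of which tuple $(f_{n_1},\ldots,f_{n_\nu})$ one plugs in. Your dilation idea could perhaps be pushed through, but note that when $\nu\geq 2$ the rescalings $\omega/n_i$ for different $i$ land at \emph{different} points of $\Lambda_1$, so ``reducing to the single compact stratification associated to $\Lambda_1$'' is not as clean as you suggest; the paper's parametrize-by-polynomials trick is what makes the uniformity genuinely automatic.
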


Using the inequality in (\ref{growcrit}) and the fact that the topology of the critical manifold is uniformly bounded by a constant depending on $d$ and $l$ only, one is tempted to answer the second part of question (ii), concerning the topology of $\Omega_p\cap\{J\leq s\}$, using Morse-Bott inequalities. What we would get with this strategy is, for a corank $l$ distribution:
\begin{equation}\label{MBI}b(\Omega_p\cap\{J\leq s\})\leq O(s)^l.\end{equation}
Surprisingly enough, this estimate can be improved up to an $O(s)^{l-1}$ (as we already noticed, in the corank-one case $b(\Omega_p\cap\{J\leq s\})$ is indeed constant). Before discussing the general case, let us continue the corank-two example.

\begin{example}[Corank-two: a topological coarea formula] In this case not only we can show that $b(\Omega_p\cap\{J\leq s\})$ grows as $O(s)$, but also we can analiticaly compute the leading coefficient.

To explain the result, consider a unit circle $S^1 \subset W\simeq (\dd)^*$. As the parameter $\theta$ varies on $S^1$ let us consider the positive eigenvalues $\alpha_1(\theta),\ldots, \alpha_j(\theta)$ of the matrix $i\cos(\theta)A_1+i\sin(\theta)A_2$. The genericity assumption ensures that these numbers can be taken as the value of semialgebraic functions $\alpha_j:S^1\to \mathbb{R}.$ Given $p\in \Delta$ we consider the \emph{rational} functions $\lambda_j:S^1\to \mathbb{R}\cup \{ \infty\}$ given by:
$$\lambda_j:\theta \mapsto \left|\frac{\alpha_j(\theta)}{p_1\cos(\theta)+p_2 \sin(\theta)}\right|\quad \textrm{for}\quad j=1, \ldots, d.$$
Notice that when $\omega$ approaches $p^{\perp}$ these functions might explode, that is why they are rational in $\theta$; on the other hand they are semialgebraic and differentiable almost everywhere and it makes sense to consider the integral:
$$\tau(p)\doteq\frac{1}{2}\int_{S^1}\sum_{j=1}^{d}\left|\frac{\partial\lambda_j}{\partial \theta}(\theta)\right|-\left|\sum_{j=1}^d \frac{\partial\lambda_j}{\partial \theta}(\theta)\right|d\theta.$$
The convergence of the integral follows from the fact that where the derivatives explode, they all have the same sign and the integrand vanishes.

\begin{teoi}[Topological coarea formula]If the corank $l=2$, for a generic choice of $W\subset \sod$ and $p\in \dd$ we have:
\begin{equation}\label{topco}b({\Omega}_{p}\cap\{J\leq s\})=\tau(p) s+o(s).\end{equation}
\end{teoi}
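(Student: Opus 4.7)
The plan is to slice the Lagrange-multiplier space $W\simeq \dds$ by directions: for $\theta\in S^1\subset W$ let $\omega_\theta=(\cos\theta,\sin\theta)$, so that the ray $\{r\omega_\theta:r\geq 0\}$ meets $\Lambda=\bigcup_n \Lambda_n$ at the radii $r=n/\alpha_j(\theta)$, $n\in\mathbb{N}$. By item~3 of the structure theorem the critical manifold at such a point has energy $\omega(p)=n/\lambda_j(\theta)$, so the sublevel analysis of $\Omega_p\cap\{J\leq s\}$ reduces to tracking, direction by direction, the discrete set $\{n:\,n\leq s\lambda_j(\theta)\}$ over all branches~$j$, together with the two-resonance points arising at crossings between the branches.

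First I would fix a generic $\theta$ (where every $\omega\neq 0$ on the ray lies in a single $\Lambda_n$) and analyse the Morse--Bott evolution of the sublevel as $s$ sweeps the ray. Each level crossing attaches a handle-bundle over $C_\omega\simeq S^1$, whose rank is the Morse--Bott negative index $k(\omega)$ read off from the spectrum of $\omega A$. Because this index is \emph{monotone} in~$n$ along the ray, successive handle attachments telescope, and the total $\mathbb{Z}_2$ Betti number contributed along a single generic ray is bounded independently of~$s$. Thus the pure one-resonance contribution, although producing $O(s)$ critical manifolds per ray, is homologically negligible.

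Second, I would account for how directions interact. The obstruction to the telescoping above is exactly a \emph{crossing}: a value of $\theta$ at which two branches $n_1/\lambda_{j_1}(\theta)$ and $n_2/\lambda_{j_2}(\theta)$ coincide below level $s$, corresponding to a two-resonance point $\omega\in\Lambda_{n_1}\cap\Lambda_{n_2}$ with $C_\omega\simeq S^1\times S^1$. A careful handle analysis shows that each such crossing ``survives'' cancellation and contributes a uniformly bounded amount to $b(\Omega_p\cap\{J\leq s\})$. The key combinatorial identity is that the number of crossings in an angular slab $[\theta,\theta+d\theta]$ below level $s$ equals, to leading order,
$$
\tfrac{s}{2}\Bigl(\sum_j |\lambda_j'(\theta)|-\bigl|\sum_j \lambda_j'(\theta)\bigr|\Bigr)\,d\theta,
$$
since $\sum |\lambda_j'|-|\sum\lambda_j'|=2\min(S_+,S_-)$ records exactly twice the ``minority'' of derivative signs, i.e. the branches whose graphs must cross the others. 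Integrating over $S^1$ produces the stated coefficient $\tau(p)$.

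The main obstacle is converting the ``telescoping cancellation plus persistent crossings'' heuristic into a rigorous asymptotic rather than merely an upper bound. Two routes seem promising: (a) a direct CW decomposition of the sublevel using the Morse--Bott handles, tracking the boundary operator explicitly in terms of the index data and showing that the only surviving cells are those born at crossings; or (b) a Leray-type spectral sequence for the projection from the critical set onto $W$, in which the $E^2$ page computes the desired Betti number as an integral over $S^1$. A secondary technical issue is the rational character of $\lambda_j(\theta)$, which blows up at directions with $p_1\cos\theta+p_2\sin\theta=0$: convergence of the defining integral $\tau(p)$ follows from the fact that all $\lambda_j$ become positive infinite with the same sign there, but one must also verify that the $o(s)$ error in the asymptotic is uniform near these poles, which should follow from the semialgebraic structure of the eigenvalue branches $\alpha_j$.
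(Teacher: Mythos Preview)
Your heuristic that the leading coefficient is carried by ``crossings'' is correct, and your integrand identity $\tfrac12\bigl(\sum_j|\lambda_j'|-|\sum_j\lambda_j'|\bigr)$ is exactly what emerges in the paper. But the route you propose---direct Morse--Bott handle analysis along rays, with a telescoping cancellation for one-resonance manifolds and survival at two-resonance tori---is not the paper's, and the step you flag as the ``main obstacle'' is a genuine gap you have not closed. In particular the telescoping claim is not obvious: consecutive $S^1$ critical manifolds along a ray have indices $k,k'$ that need not differ by exactly~$2$, so the attached cells in degrees $\{k,k+1\}$ and $\{k',k'+1\}$ have no a priori reason to cancel in the boundary operator. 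Neither your route (a) nor (b) is executed, and making either rigorous would amount to redeveloping the machinery the paper already invokes.

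The paper bypasses handle cancellation entirely. First, by running Morse--Bott theory for $-J$ (whose critical manifolds have \emph{infinite} index, so attaching their cells changes nothing), one gets $H_*(\Omega_p^s)\simeq H_*(\partial\Omega_p^s)$. The boundary $\partial\Omega_p^s$ is the common zero locus of the two quadrics $q_i(v)-p_i\|v\|^2/s$ on the unit sphere in $H$, and the paper applies an index formula for such intersections (Appendix on quadrics): writing $\textrm{i}^-_s(\omega)$ for the negative inertia index of $\omega Q-\omega(p)\mathbbm{1}/s$, one has $\textrm{i}^-_s(\omega)=\sum_j\lfloor s\lambda_j(\omega)\rfloor$ on the half-circle $P=\{\omega(p)<0\}$, and $b(\Omega_p^s)=2\mu(s)+O(1)$ where $\mu(s)$ is the number of local maxima of this integer-valued function. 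The asymptotic is then elementary: on each arc where all $\lambda_j$ are monotone, the number of local maxima of $\sum_j\lfloor s\lambda_j\rfloor$ is half the difference between the total jump count $\sum_j\sigma_j$ and the net jump $|\Delta\textrm{i}^-_s|$; dividing by $s$ and letting $s\to\infty$ converts floor-function jump counts into $\int|\dot\lambda_j|$ and $\int|\sum\dot\lambda_j|$, yielding $\tau(p)$. Your ``crossings'' are exactly these local maxima (a maximum of $\sum_j\lfloor s\lambda_j\rfloor$ occurs where one $\lfloor s\lambda_j\rfloor$ steps up and another steps down), so the combinatorics you isolated is right; what you are missing is the reduction to the boundary and the quadric index formula that turns the count of maxima directly into $b(\Omega_p^s)$, replacing your unproven cancellation argument with a clean spectral statement.
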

The name ``topological coarea'' reminds of integral geometry: in fact the coefficient $\tau(p)$ is computed by considering the sum of the Betti numbers $b=\sum_{i}b_i$ as an integral over the index set, regarding $i$ as the ``variable'' of integration; rearranging this sum in an appropriate way, in the limit gives exactly the coarea formula for functions of one variable. In fact, looking at the inequalities defining $\Omega_p\cap\{J\leq s\}$, one immediately sees that they are quadratic. There is a general theory (see Appendix \ref{appendixquadrics} or \cite{AgrachevLerarioSystems} for more details) for studying the topology of sets defined by quadratic inequalities: the idea is to consider quadratic forms obtained by taking linear combinations of the equations defining the set, i.e. considering quadratic forms depending on some (homogeneous) parameters (there are as many parameters as the number of inequalities minus one). The main ingredient is  the function on this parameter space that counts the number of positive eigenvalues (the \emph{positive inertia index}) of the corresponding quadratic form. Roughly, the theory says that for each ``change'' in the monotony of  this function, there corresponds a ``hole'' in the set. Thus one can count, for each level of the positive inertia index, how many homology classes are at that level, and this gives another rearrangement of the above sum.
\end{example}
\begin{figure}
\scalebox{0.6} 
{
\begin{pspicture}(0,-4.699199)(12.081895,4.719199)
\definecolor{color1004b}{rgb}{0.8,0.8,0.8}
\psline[linewidth=0.04cm,linestyle=dashed,dash=0.16cm 0.16cm](2.94,2.5008008)(2.94,-4.679199)
\psline[linewidth=0.04cm,linestyle=dashed,dash=0.16cm 0.16cm](0.0,-2.1791992)(7.82,-2.1191993)
\pspolygon[linewidth=0.04,linecolor=white,fillstyle=solid,fillcolor=color1004b](2.96,-2.1391993)(4.54,2.460801)(8.32,-1.2391992)
\psline[linewidth=0.04cm](2.26,4.640801)(10.28,-3.1391993)
\psline[linewidth=0.04cm](2.96,-2.1791992)(4.96,3.6408007)
\psline[linewidth=0.04cm](2.98,-2.1391993)(12.06,-0.5591992)
\psline[linewidth=0.04cm,arrowsize=0.05291667cm 2.0,arrowlength=1.4,arrowinset=0.4]{->}(3.0,-2.1591992)(4.6,-0.4991992)
\psdots[dotsize=0.12](4.68,1.3808007)
\psdots[dotsize=0.12](4.96,0.78080076)
\psdots[dotsize=0.12](4.92,0.0)
\psdots[dotsize=0.12](4.1,-0.29919922)
\psdots[dotsize=0.12](5.42,-0.39919922)
\psdots[dotsize=0.12](5.2,2.9408007)
\psdots[dotsize=0.12](7.26,2.3608007)
\psdots[dotsize=0.12](6.08,2.200801)
\psdots[dotsize=0.12](7.24,1.1208007)
\psdots[dotsize=0.12](6.1,1.3008008)
\psdots[dotsize=0.12](6.86,3.1808007)
\psdots[dotsize=0.12](9.7,2.6008008)
\psdots[dotsize=0.12](8.64,1.5408008)
\psdots[dotsize=0.12](8.1,2.5408008)
\psdots[dotsize=0.12](9.1,0.10080078)
\psdots[dotsize=0.12](8.12,0.20080078)
\psdots[dotsize=0.12](10.6,1.4608008)
\psdots[dotsize=0.12](10.14,0.10080078)
\psline[linewidth=0.04cm,arrowsize=0.05291667cm 2.0,arrowlength=1.4,arrowinset=0.4]{->}(9.94,-2.4191992)(10.96,-1.5391992)
\usefont{T1}{ptm}{m}{n}
\rput(11.161455,-2.1341991){$s\to \infty$}
\usefont{T1}{ptm}{m}{n}
\rput(3.471455,4.5258007){$\omega(p)=s$}
\usefont{T1}{ptm}{m}{n}
\rput(4.631455,-0.8741992){$p$}
\end{pspicture} 
}
\caption{(Corank two) The number of critical manifolds with energy less than $s$ can grow quadratically in $s$ (this number is proportional to the volume of the shaded region; dots are Lagrange multipliers).}
\label{fig:lattice}
\end{figure}
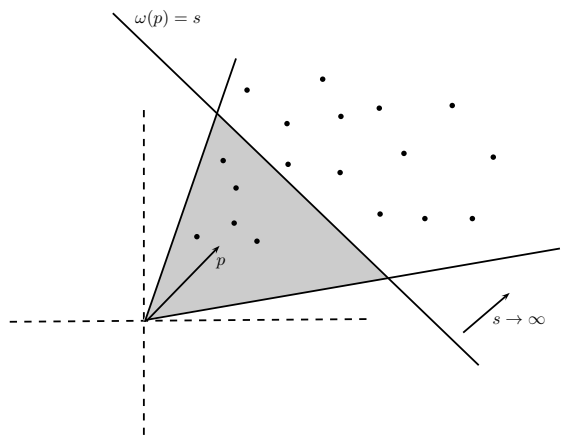

In the general case the upper bound we get is the following (see Figure \ref{fig:bound}).
\begin{teoi}[Bound on the growth rate of the topology]]For the generic choice of $W\subset \sod$ and of $p\in \dd$ we have:
$$b(\Omega_p\cap\{J\leq s\})\leq O(s)^{l-1},\quad l=\dim (\dd).$$

\end{teoi}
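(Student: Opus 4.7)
The plan is to regard $\Omega_p\cap\{J\leq s\}$ as a set defined by quadratic inequalities in a Hilbert space and apply the spectral-sequence machinery of the Appendix (cf.\ \cite{AgrachevLerarioSystems}), which controls the total Betti number of such a set by the topology of an arrangement in the dual space of Lagrange multipliers.

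First, working in the closed Hilbert subspace
$$H=\left\{u\in L^2(I,\R^d)\;\middle|\;\int_I u=0\right\},$$
the previous theorems on the structure of geodesics give
$$\Omega_p\cap\{J\leq s\}=\left\{u\in H\;\middle|\;\langle u,L_i(A)u\rangle=p_i\ (i=1,\dots,l),\ \|u\|^2\leq 2s\right\}.$$
Splitting each equation into two opposite inequalities presents this as the intersection of $H$ with $2l+1$ quadratic inequalities; rescaling $u\mapsto u/\sqrt{2s}$ places the resulting set inside the unit ball. We are thus in the scope of the Appendix.

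Second, introduce the family of self-adjoint operators $L_\omega=\sum_{i=1}^l\omega_i L_i(A)$, $\omega\in S^{l-1}\subset W^*\cong\R^l$, on $H$. By the ``structure of geodesics'' theorem, the positive spectrum of $L_\omega$ is precisely $\{\alpha_k(\omega)/n : k,n\in\mathbb{N}\}$, where $\alpha_1(\omega)\geq\alpha_2(\omega)\geq\cdots$ are the positive eigenvalues of $i\omega A$. Consequently the positive inertia index function
$$i^+(\omega,\lambda)=\#\{(k,n)\colon\alpha_k(\omega)>n\lambda\}$$
is piecewise constant and semialgebraic on $(\omega,\lambda)\in S^{l-1}\times\R_{>0}$.

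Third, the spectral sequence of the Appendix yields an estimate of the form
$$b(\Omega_p\cap\{J\leq s\})\leq 1+b(\Omega^+_s),$$
where $\Omega^+_s\subset S^{l-1}$ is a compact semialgebraic region stratified by the level sets of $i^+$, truncated to the parameters with $\omega(p)\leq s$. The walls of this stratification are exactly the hypersurfaces $\Lambda_n\cap S^{l-1}$ from the ``infinite net'' $\Lambda$, and only those with $n=O(s)$ actually meet $\Omega^+_s$. Hence $\Omega^+_s$ is cut out by an arrangement of $O(s)$ real algebraic hypersurfaces of degree bounded independently of $s$ inside the $(l-1)$-dimensional sphere $S^{l-1}$. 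Applying the classical Oleinik--Petrovsky--Thom--Milnor bound (an arrangement of $N$ hypersurfaces of bounded degree in a $d$-dimensional manifold has total $\mathbb{Z}_2$-Betti number $O(N)^d$) with $N=O(s)$ and $d=l-1$ yields $b(\Omega^+_s)\leq O(s)^{l-1}$, whence the claimed estimate.

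The main obstacle, and the reason the naive Morse--Bott bound $b\leq O(s)^l$ can be improved, is to trade the \emph{$l$-dimensional} count of critical manifolds (which live on the $l$-dimensional net $\Lambda$) for an \emph{$(l-1)$-dimensional} count of topological features of their trace on $S^{l-1}$: the spectral sequence achieves exactly this saving, one dimension being lost upon projectivization in $\omega$. The genericity hypotheses on $W\subset\sod$ and on $p\in\dd$ enter to guarantee transversality of the arrangement, so that singular strata have positive codimension and the Milnor--Thom bound is sharp in its $s$-dependence; the corank-two topological coarea formula proved above shows that this $O(s)^{l-1}$ estimate is tight to leading order in that case.
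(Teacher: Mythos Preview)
Your overall intuition is right---the saving of one power of $s$ ultimately comes from passing to the $(l-1)$-dimensional sphere of normalized covectors---but the argument as written has real gaps, and the paper's route is substantially different.

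First, the setup: having split the $l$ equations into $2l+1$ inequalities, the dual simplex/sphere in the spectral-sequence machinery has dimension $2l$, not $l-1$. To land on $S^{l-1}$ one must keep the $l$ equations as equations; the paper handles the remaining energy inequality separately (either via the retraction $\Omega_p^s\simeq\partial\Omega_p^s$, or via a short Mayer--Vietoris step).

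Second, the inequality ``$b(\Omega_p\cap\{J\leq s\})\leq 1+b(\Omega^+_s)$'' is not something the Appendix provides, and your description of $\Omega^+_s$ is not well-defined: for $\omega\in S^{l-1}$ the quantity $\omega(p)$ is bounded independently of $s$, so ``truncated to $\omega(p)\leq s$'' is vacuous. More seriously, the claim that only $O(s)$ walls $\Lambda_n\cap S^{l-1}$ appear is false as stated. After your rescaling the relevant operator is $\omega Q-\frac{\omega(p)}{2s}\mathbbm{1}$, whose index jumps along $\{\alpha_i(\omega)/k=\omega(p)/(2s)\}$; near the equator $\{\omega(p)=0\}$ these walls accumulate and there are infinitely many of them for every $s$. (In the corank-two proof of Theorem~\ref{coarea} this is exactly why the index function blows up at the boundary of $P$; what saves the day there is monotonicity near the boundary, not finiteness of the walls.) So the Milnor--Thom step, which needs a \emph{finite} arrangement of $O(s)$ hypersurfaces of bounded degree, is not available in the form you invoke it.

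The paper avoids all of this by a finite-dimensional reduction that you skip: Proposition~\ref{vector} shows $\Omega_p^s$ deformation retracts onto $\Omega_p^s\cap T^{\lfloor r_p s\rfloor}$, a semialgebraic set in a space of dimension $O(s)$. A Mayer--Vietoris argument then bounds $b(\Omega_p^s)$ by the total Betti number of an intersection of $l$ quadrics on a sphere of dimension $n=O(s)$, and the black-box estimate $b(Y)\leq O(n)^{l-1}$ from Proposition~\ref{propo:quadrictopology} finishes the job. Thus in the paper the base $O(s)$ is the ambient dimension and the exponent $l-1$ is built into the quadric bound; the passage to $S^{l-1}$ happens inside the proof of that bound in \cite{LerarioComplexity}, not at the level of $\Omega_p^s$ itself.
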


Here the idea is to use the function $-J|_{\Omega_p}$, which is again Morse-Bott, whose critical manifolds are the same of $J|_{\Omega_p}$ but with \emph{infinite} index. Thus passing one of these critical manifolds amounts of attaching an infinite dimensional cell (i.e the homotpy of the sublevel set doesn't change). Thus one can start with $\Omega_p\cap\{J\leq s\}$  and ``push'' its topology to its boundary $\Omega_p\cap\{J=s\}$ via $\nabla J |_{\Omega_p}$. This observation will have important consequences for our study, including the above result.

If we look back again at the Heseinberg example, we see that $b_i(\Omega_p\cap\{J\leq s\})$ is zero for $i>O(s),$ simply because $\Omega_p\cap\{J\leq s\}$ has the homotopy type of a sphere whose dimension grows linearly in $s.$ It is in fact a general phenomenon that the maximum nonzero Betti number increases at most linearly in $s$ (with no restrictions on the corank).

\begin{teoi}[The vanishing rate of Betti numbers] For the generic choice of $W\subset \sod$ and of $p\in \dd$ we have:
$$\max\{i\,|\, b_i(\Omega_p\cap \{J\leq s\})\neq 0\}\leq O(s).$$

\end{teoi}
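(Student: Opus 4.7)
The plan is to apply Morse--Bott theory to $J|_{\Omega_p}$ directly and to bound the Morse index of each critical manifold linearly in its energy. Since $\Omega_p \cap \{J\leq s\}$ is homotopy equivalent to a CW complex with cells of dimension $\mu(C_\omega) + \dim C_\omega$ attached as $\omega$ ranges over $\Lambda(p) \cap \{\omega(p) \leq s\}$, and $\dim C_\omega \leq l$ is universally bounded, it suffices to show that $\mu(C_\omega) = O(s)$ on this set; this immediately forces $b_i = 0$ for $i > O(s)$.

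To compute $\mu(C_\omega)$ I would write the constrained Hessian of $J$ on $\Omega_p$ at a point of $C_\omega$. Using that $J$ is a fixed positive-definite quadratic form and that $\omega \cdot F$ restricted to the linear subspace $H = \{F_1=\cdots=F_d=0\}$ is a quadratic form $\langle u, L_\omega u\rangle$ (point 2 of the structure theorem), the Hessian takes the form $\Id - 2 L_\omega$ on the tangent space of the constraint, so $\mu(C_\omega)$ equals the number of eigenvalues of $L_\omega$ strictly greater than $1/2$. Since $L_\omega$ is a compact self-adjoint operator built from the matrix $\omega A$, a joint Fourier decomposition in $t \in [0, 2\pi]$ and a spectral decomposition of $\omega A$ on $\R^d$ yield an explicit eigenstructure: the large eigenvalues of $L_\omega$ are indexed by pairs $(k,j)$ with $k \in \mathbb{Z}$ a Fourier mode and $\pm i\alpha_j$ an eigenvalue of $\omega A$, subject to a resonance-type inequality of the form $|k| \leq C|\alpha_j|$. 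Counting such pairs yields $\mu(C_\omega) = O\bigl(\sum_j |\alpha_j|\bigr) = O(\|\omega A\|) = O(\|\omega\|)$.

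The remaining step is the linear bound $\|\omega\| = O(\omega(p))$ uniformly on $\Lambda(p)$. Using the scaling $\Lambda_n = n\Lambda_1$, a point $\omega = n\omega_1$ with $\omega_1 \in \Lambda_1$ satisfies $\omega(p) = n\omega_1(p)$ and $\|\omega\| = n\|\omega_1\|$, so it suffices to verify that $\omega_1(p)/\|\omega_1\|$ is bounded below by a positive constant over the finitely many components of $\Lambda_1$ contributing to $\Lambda(p)$. The main obstacle is exactly this uniformity: a priori, discrete points of $\Lambda(p)$ could accumulate towards the hyperplane $\{\omega(p) = 0\}$ with $\|\omega\|$ large. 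I would rule this out by a generic transversality argument showing that the algebraic hypersurfaces $\Lambda_n$ meet $p^{\perp}$ transversally for a residual $p \in \dd$, which follows from a dimension count on the bad locus in $\dd \times W$ and is subsumed in the standing ``generic $p$'' hypothesis. Granting this, the Morse--Bott cell attachment produces a CW model of $\Omega_p \cap \{J \leq s\}$ concentrated in dimensions $\leq O(s)$, and the theorem follows.
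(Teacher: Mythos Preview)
Your overall strategy is sound and in fact matches the machinery the paper builds: the Hessian at a critical manifold with multiplier $\omega$ is $\Id-\omega Q$, the eigenvalues of $\omega Q$ are $\pm\alpha_j(\omega)/k$, so the index is $\sum_j\lfloor\alpha_j(\omega)\rfloor=O(\|\omega\|)$, and combining this with $\|\omega\|\le c_p\,\omega(p)\le c_p s$ gives the CW bound you want. The paper actually packages the final step differently: rather than reading off cell dimensions from Morse--Bott theory, it constructs a deformation retraction of $\Omega_p^s$ onto $\Omega_p^s\cap T^{\lfloor r_p s\rfloor}$ (via the gradient flow of the ``distance from $T^L$'' function), so that $\Omega_p^s$ has the homology of a semialgebraic set in $\R^{2d\lfloor r_p s\rfloor}$. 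Both routes hinge on the same cone estimate $\|\omega\|\le c_p\,\omega(p)$ for $\omega\in\Lambda(p)$ (the paper's Lemma~\ref{durazzo}).

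The genuine gap is in your justification of that estimate. The scaling reduction ``write $\omega=n\omega_1$ with $\omega_1\in\Lambda_1$ and bound $\omega_1(p)/\|\omega_1\|$ on finitely many components'' does not work: a Lagrange multiplier $\omega\in\Lambda(p)$ typically lies on an intersection $\Lambda_{n_1}\cap\cdots\cap\Lambda_{n_\nu}$ with no common scale, and even choosing $n=n_{\max}$ the rescaled $\omega/n$ need not stay in any fixed compact subset of $\Lambda_1$ (the non-integer eigenvalues of $i\omega A$ can be much larger than $n_{\max}$). Likewise, transversality of the hypersurfaces $\Lambda_n$ to $p^\perp$ says nothing about whether the \emph{discrete} set $\Lambda(p)\subset\Lambda$ can have normalized directions accumulating on $p^\perp$. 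The paper's argument is different: assume $\hat\omega_n\to\lambda$ with $\lambda(p)=0$; since $p$ lies in the cone spanned by the normals $l_j^+(\omega_n)$ to the level hypersurfaces and each $\langle\hat\omega_n,l_j^+(\omega_n)\rangle\ge0$, the contributing normals must satisfy $\langle\lambda,l_j^+\rangle=0$ in the limit; an explicit computation of this angle shows the corresponding rescaled eigenvalues $\alpha_j/\|\omega_n\|\to0$, so $p$ ends up in the image of $q$ restricted to $\ker(\lambda A)$, i.e.\ $p$ is a critical value of $q$---excluded by the standing genericity assumption $p\notin\Sigma_1$. Plugging this corrected lemma into your index bound, your Morse--Bott argument goes through.
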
 

At this point the study of question (ii) above is complete and one can address question (iii). What we know so far is that the whole $\Omega_p$ is contractible and we have rather precise asymptotics on the behavior of the number of geodesics in $\Omega_p\cap\{J\leq s\}$ and its total Betti number as $s\to \infty$. What we have to do is simply to translate the results using the homeomorphism \eqref{eq:scaling2}:
$$\Omega_{\delta_{\epsilon}(p)}\cap \{J\leq c\}\simeq \Omega_p\cap\{J\leq s\}\quad \textrm{as $s=c/\epsilon^2\to \infty$}.$$

For example one might ask wether the estimate for the number of critical points and the topology of $\Omega_{\delta_\epsilon(p)}\cap \{J\leq c\}$ can be actually attained. In the case $l=2$ we have seen that the leading coefficient $\tau(p)$ of $b(\Omega_{\delta_\epsilon(p)})$ can be analitically computed and in fact for the generic Carnot group structure and $p\in \dd$ it is not difficult to show that $\tau(p)\neq 0$. In particular as we let $\epsilon\to 0$, the topology of $\Omega_{\delta_\epsilon(p)}$ explodes; as a corollary the number of geodesics grows unbounded as well (this is an example of a backward use of Morse-Bott inequalities).

For the general corank $l$ we do not know whether the limit:
$$\tau(p)=\limsup_{\epsilon\to 0}b(\Omega_{\delta_\epsilon(p)}\cap\{J\leq 1\})\epsilon^{2l-2}=\limsup_{\epsilon\to 0}b(\Omega_p\cap \{J\leq1/\epsilon\})\epsilon^{l-1}$$ 
is different from zero, but it is natural to guess so (we only know it is finite).

\begin{figure}

\scalebox{0.6} 
{
\begin{pspicture}(0,-2.79)(16.001015,2.79)
\psline[linewidth=0.04cm,arrowsize=0.05291667cm 2.0,arrowlength=1.4,arrowinset=0.4]{<-}(0.6010156,2.77)(0.6210156,-2.73)
\psline[linewidth=0.04cm,arrowsize=0.05291667cm 2.0,arrowlength=1.4,arrowinset=0.4]{->}(0.64101565,-2.71)(15.981015,-2.77)
\psline[linewidth=0.04cm](0.6010156,-1.9511149)(1.3609643,-1.9511149)
\psline[linewidth=0.04cm,linestyle=dashed,dash=0.16cm 0.16cm](1.3609643,-1.9511149)(1.3609643,-2.65)
\psline[linewidth=0.04cm](3.2010157,-1.95)(3.9620342,-1.95)
\psline[linewidth=0.04cm,linestyle=dashed,dash=0.16cm 0.16cm](3.2010157,-1.95)(3.2010157,-2.7186956)
\psline[linewidth=0.04cm,arrowsize=0.05291667cm 2.0,arrowlength=1.4,arrowinset=0.4]{->}(8.621016,1.89)(9.801016,1.89)
\usefont{T1}{ptm}{m}{n}
\rput(2.3924707,2.515){$b_i(\Omega_p\cap\{J\leq s\})$}
\usefont{T1}{ptm}{m}{n}
\rput(15.332471,-2.365){$i$}
\usefont{T1}{ptm}{m}{n}
\rput(10.962471,1.855){$s\to \infty$}
\psline[linewidth=0.04cm](4.0022616,-1.1305199)(4.7210155,-1.13)
\psline[linewidth=0.04cm,linestyle=dashed,dash=0.16cm 0.16cm](4.0022616,-1.1305199)(4.0022616,-2.667399)
\psline[linewidth=0.04cm](4.7410154,0.23026311)(5.4410124,0.23026311)
\psline[linewidth=0.04cm,linestyle=dashed,dash=0.16cm 0.16cm](4.7410154,0.23026311)(4.7410154,-2.6673875)
\psline[linewidth=0.04cm,linestyle=dashed,dash=0.16cm 0.16cm](5.4410124,0.23026311)(5.4410124,-2.71)
\psline[linewidth=0.04cm](5.4410157,-0.37)(6.1410155,-0.37)
\psline[linewidth=0.04cm](6.1410155,1.449803)(6.821019,1.449803)
\psline[linewidth=0.04cm,linestyle=dashed,dash=0.16cm 0.16cm](6.1410155,1.449803)(6.1410155,-2.5708723)
\psline[linewidth=0.04cm,linestyle=dashed,dash=0.16cm 0.16cm](6.821019,1.449803)(6.821019,-2.63)
\psline[linewidth=0.04cm](6.821016,0.83)(7.5210156,0.83)
\psline[linewidth=0.04cm](7.5210156,2.2100012)(8.219905,2.2100012)
\psline[linewidth=0.04cm,linestyle=dashed,dash=0.16cm 0.16cm](7.5210156,2.2100012)(7.5210156,-2.6189854)
\psline[linewidth=0.04cm,linestyle=dashed,dash=0.16cm 0.16cm](8.219905,2.2100012)(8.219905,-2.69)
\psline[linewidth=0.04cm](8.221016,0.85)(8.8610115,0.83022463)
\psline[linewidth=0.04cm,linestyle=dashed,dash=0.16cm 0.16cm](8.8610115,0.83022463)(8.8610115,-2.71)
\psline[linewidth=0.04cm](9.501016,0.28973505)(10.160949,0.28973505)
\psline[linewidth=0.04cm,linestyle=dashed,dash=0.16cm 0.16cm](9.501016,0.28973505)(9.501016,-2.6665256)
\psline[linewidth=0.04cm,linestyle=dashed,dash=0.16cm 0.16cm](10.160949,0.28973505)(10.160949,-2.71)
\psline[linewidth=0.04cm](10.1610155,-0.36967766)(10.821015,-0.35)
\psline[linewidth=0.04cm,linestyle=dashed,dash=0.16cm 0.16cm](10.801016,-0.37)(10.821015,-2.73)
\psline[linewidth=0.04cm](11.461016,-0.40933883)(12.142162,-0.40933883)
\psline[linewidth=0.04cm,linestyle=dashed,dash=0.16cm 0.16cm](11.461016,-0.40933883)(11.461016,-2.695961)
\psline[linewidth=0.04cm,linestyle=dashed,dash=0.16cm 0.16cm](12.142162,-0.40933883)(12.142162,-2.7295878)
\psline[linewidth=0.04cm](8.841016,-0.37)(9.461016,-0.37)
\psline[linewidth=0.04cm](10.841016,-1.15)(11.441015,-1.15)
\psline[linewidth=0.04cm](12.1610155,-2.15)(12.821015,-2.15)
\psline[linewidth=0.04cm,linestyle=dashed,dash=0.16cm 0.16cm](12.801016,-2.19)(12.801016,-2.73)
\end{pspicture} 
}
\caption{The Betti numbers of $\Omega_p\cap\{J\leq s\}$. As $s$ goes to infinity the ``wave'' moves to the right and the sum of all the Betti numbers (the area below the wave) can increase as fast as $O(s^{l-1})$, but eventually everything ``disappears''.}
\label{fig:bound}
\end{figure}
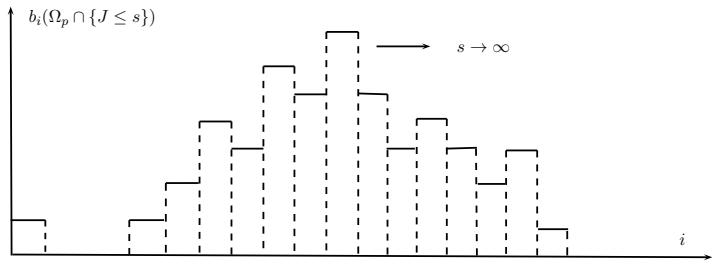

As a concluding remark, we believe these asymptotic Morse inequalities can be useful for generalizations and, in particular, for the next paper that we plan to devote
to the general step-two sub-Riemannian structure. Of course, in general, we cannot guarantee that the functional is Morse-Bott.
The quantity to estimate could be a ``virtual number of geodesics": the minimal number of critical points for a Morsification of the functional. 

\subsection{Structure of the paper}The paper is organized as follows. In section \ref{sec:preliminaries} we discuss some preliminary material and give the main definitions. In Section \ref{sec:geodesics} we study the structure of geodesics: the theorem on the smoothness and the topology of $\Omega_p$ is Theorem \ref{regularvalue} and the theorem on the structure of geodesics is a combination of Lemmas \ref{endpoint}, \ref{nomultiple} and \ref{distinct} and Theorem \ref{structure}. The growth of the number of critical manifolds is computed in Theorem \ref{criticalcount}; the bound on the growth rate of the topology of $\Omega_p\cap\{J\leq s\}$ is proved in Theorem \ref{bettiorder} and the exact asymptotic for the case $l=2$ is the content of Theorem \ref{coarea}; the theorem on the vanishing rate of the Betti numbers is Corollary \ref{maxbetti}. The Appendix contains, for the reader's convenience, some useful results we will use in the main body.

\section{Preliminaries}\label{sec:preliminaries}
\subsection{Step two Carnot Groups and their geometry}
Here we briefly recall the main definitions related to Carnot groups; the reader is referred to \cite{AgrachevBarilariBoscain, Semmes, OnishchickVinberg} for more details.

As a differentiable manifold a \emph{step two Carnot group} is a connected, simply connected Lie group $G$ whose Lie algebra $\g=T_eG$ decomposes as:
$$\g=\Delta \oplus \dd, \quad \textrm{with} \quad [\Delta, \Delta]=\dd,\quad [\g, \Delta]=0;$$
as vector spaces here we have $\Delta \cong \R^{d}$ and $\dd \cong \R^l$.

Whenever a Lie algebra $\g$ as above is given, the existence (and uniqueness) of such a group $G$ is guaranteed by Lie's theorem. We recall that in fact under the above assumption on the structure of $\g,$ the exponential map $\textrm{exp}:\g \to G$ is an analytic diffeomorphism, hence in particular $G\simeq\R^{d+l}.$

The geometric structure on $G$ is given by fixing a scalar product $h$ on $\Delta$ and considering the distribution $\Delta_q=dL_q\Delta$ together with the extension of $h$ by left translation; in this way the triple $(G, \Delta, h)$ defines a \emph{sub-Riemannian manifold}. Notice that the distribution $\Delta$ is by assumption bracket generating, hence it satisfies H\"ormander's condition, in particular meaning that $G$ is \emph{admissible-path-connected} (see below).

Two such Carnot groups $(G_1, \Delta_1, h_1)$ and $(G_2, \Delta_2, h_2)$ are considered to be isomorphic if there exists a Lie algebra isomorphism $L:\g_1\to \g_2$ such that $L\Delta_1=\Delta_2$ and $L|_{\Delta_1}^*h_2=h_1:$ in fact by the simple connectedness assumption the linear map $L$ integrates to a Lie group isomorphism $\phi:G_1\to G_2$ and the global geometric structures are related by $(G_1, \Delta_1, h_1)=(G_1, \phi^{-1}\Delta_2, \phi^*h_2).$

If we fix now an \emph{orthonormal} basis $\{e_1, \ldots, e_d\}$ of $\Delta$ and a basis $\{f_1, \ldots, f_l\}$ of $\dd,$ then the bracket structure can be written as:
\begin{equation}\label{eq:structure}[e_i, e_j]=\sum_{k=1}^la_{ij}^kf_k, \quad \textrm{for all } i,j\in\{1, \ldots,d\}\end{equation}
where each matrix $A_k=(a_{ij}^k)$ belongs to $\sod.$ In particular we can consider the vector space:
$$W=\textrm{span}\{A_1, \ldots, A_l\}\subset \sod.$$
Each vector space $W$ of dimension $l$ in $\sod$ defines a sub-Riemannian structure on $G$ by fixing a basis $\{A_1, \ldots, A_l\}$ for $W$ and declaring that the corresponding matrices define the bracket structure in an orthonormal basis $\{e_1, \ldots, e_d\}$ for $\Delta.$ The isomorphism class of the Carnot group does not depend on the choice of the basis of $W$: indeed, let $\{A'_1,\ldots,A'_l\}$ be another basis of $W$ and $B=(b_{hk})$ the basis-change matrix, such that $A'_h=\sum_{k=1}^{l}b_{hk}A_k$. Now we can build another Carnot group by defining its Lie algebra $\mathfrak{g}'$ with basis $\{e'_1,\ldots,e'_d,f'_1,\ldots,f'_l\}$ such that $\{e'_1,\ldots,e'_d\}$ is orthonormal; the structure constants are given by the entries of the matrices $A'_h$ as in equations (\ref{eq:structure}). The map $\phi$, defined on the basis elements by $e'_i \mapsto e_i$ and $f'_h \mapsto \sum_{k=1}^{l} b_{hk}f_k$, gives an isomorphism between $\mathfrak{g}'$ and $\mathfrak{g}$.
\begin{remark}[The moduli space of Carnot Groups] Once a sub-Riemannian structure is given, changing $\{e_1, \ldots, e_d\}$ to another orthonormal basis $\{Me_1, \ldots, Me_d\}$ (where $M$ is an orthogonal matrix in $O(d)$) changes $W$ into $W'=MWM^T.$ Thus denoting by $G(l, \sod)$ the Grassmannian of $l$-planes in $\sod$, the (naive) moduli space of step two Carnot Groups is represented by the quotient $\mathcal{M}_{l,d}=G(l, \sod)/O(d)$. Since $\mathcal{M}_{l,d}$ is the quotient of a manifold by a Lie group action, the quotient map is open and perturbing $W$ defines a ``genuine" perturbation of the isomorphism class of the corresponding Carnot group; in particular this means that a generic choice of $W$ results in a generic choice of an isomorphism class of Carnot groups.
\end{remark}

Motivated by the above remark, once $W\in G(l, \sod)$ is fixed we consider the Carnot group given by exponentiating $\g=\R^d\oplus \R^l$, whose Lie algebra is given as follows: $\{e_1, \ldots, e_d\}$ is the standard orthonormal basis for $\R^d$, $\{f_1,\ldots,f_l\}$ is the standard basis for $\R^l$ and fixing a basis $\{A_1, \ldots, A_l\}$ for $W$, the Lie brackets are given by equation (\ref{structure}); we will call $W$ the \emph{Carnot group structure}.

The following theorem gives a geometric realization of Carnot groups.
\begin{teo}\label{geometric}
Let $\{A_1, \ldots, A_l\}$ be a basis for $W\subset \sod$ and for $i=1, \ldots, d$ consider the vector fields $E_i$ on $\R^{d+l}$ defined in coordinates $(x,y)$ by:
$$E_i(x,y)=\frac{\partial}{\partial x_i}(x,y)-\frac{1}{2}\sum_{k=1}^l \sum_{j=1}^{d}a_{ij}^kx_j\frac{\partial}{\partial y_k}(x,y).$$
Then the sub-Riemannian manifold $(\R^{d+l}, \Delta=\textrm{span}\{E_1, \ldots, E_l\}, g),$ where $g$ is the standard Euclidean metric, is isomorphic to the Carnot group defined by $W.$
\end{teo}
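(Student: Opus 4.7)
The plan is to pass to exponential coordinates and compute the left-invariant frame directly. Since the Lie algebra $\g = \R^d \oplus \R^l$ is nilpotent of step two, the exponential map $\exp:\g\to G$ is a global analytic diffeomorphism, so I may identify $G$ with $\R^{d+l}$ as a smooth manifold. Moreover, because $[\g,[\g,\g]]=0$, the Baker--Campbell--Hausdorff series truncates after the first bracket term, giving
$$\exp(X)\cdot \exp(Y)=\exp\!\left(X+Y+\tfrac{1}{2}[X,Y]\right).$$
Writing an element as $(x,y)\in \R^d\oplus\R^l$ and using \eqref{eq:structure} together with the centrality of $\dd$, the $k$-th component of $[x,x']$ equals $\sum_{i,j}a_{ij}^k x_i x'_j = x^T A_k x'$, so the group law in these coordinates reads
$$(x,y)\cdot(x',y') = \Bigl(x+x',\ y+y'+\tfrac{1}{2}\bigl(x^T A_k x'\bigr)_{k=1}^{l}\Bigr).$$

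Next I would compute the left-invariant extension $\widetilde{E}_i$ of the basis vector $e_i\in\Delta$. By definition $\widetilde{E}_i(x,y) = \frac{d}{dt}\bigr|_{t=0}(x,y)\cdot(te_i,0)$, and applying the explicit group law yields $\widetilde{E}_i(x,y) = (e_i,\tfrac{1}{2}(x^T A_k e_i)_{k})$. Using the skew-symmetry $a_{ji}^k=-a_{ij}^k$ to rewrite $x^T A_k e_i = -\sum_j a_{ij}^k x_j$, one obtains
$$\widetilde{E}_i(x,y)=\frac{\partial}{\partial x_i}-\frac{1}{2}\sum_{k=1}^{l}\sum_{j=1}^{d} a_{ij}^k x_j\,\frac{\partial}{\partial y_k},$$
which is precisely the vector field $E_i$ appearing in the statement.

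The conclusion then follows at once: the sub-Riemannian structure on $G$ is, by definition, the left translate of the inner product on $\Delta\subset T_eG$ declaring $\{e_1,\ldots,e_d\}$ orthonormal, so under the identification $G\simeq \R^{d+l}$ it becomes the inner product on $\mathrm{span}\{E_1,\ldots,E_d\}\subset T_{(x,y)}\R^{d+l}$ that declares $\{E_1,\ldots,E_d\}$ orthonormal fiberwise. The identity map of $\R^{d+l}$ thus realizes the required isomorphism of sub-Riemannian manifolds. The only delicate point in the execution is the careful bookkeeping of signs and indices in the BCH computation---in particular verifying that the skew-symmetry of the $A_k$ produces exactly the minus sign in the formula for $E_i$---but no substantive obstacle arises beyond this.
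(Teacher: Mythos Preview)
Your proof is correct. The paper states this theorem without proof, treating it as a standard fact about the realization of step-two Carnot groups in exponential coordinates; your argument via the truncated Baker--Campbell--Hausdorff formula and the explicit computation of the left-invariant frame is exactly the standard way to establish it, and your sign bookkeeping is accurate.
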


\subsection{The End-point map}
\emph{Admissible paths} on $G$ are defined to be curves $\gamma:I=[0, 2\pi]\to G$ whose derivative exists almost everywhere, is square integrable and belongs to the distribution $\Delta$. We denote the set of such paths by $\Omega.$ Our choice of the interval $I=[0, 2\pi]$ is motivated by simplicity of notation: we will later need to expand the components of an admissible path into their Fourier series; a different choice of the interval will produce a completely equivalent theory.

As earlier noticed, the bracket generating condition implies that any two points in $G$ can be joined by an admissible path.
The set $\Omega$ can be given a \emph{Hilbert manifold structure} as follows. Let $u=(u_1, \ldots, u_d)\in L^2(I, \R^d)$ and consider the Cauchy problem:
$$
\dot{\gamma}(t)=\sum_{i=1}^{d}u_{i}(t)E_{i}(\gamma(t)),\quad \gamma(0)=e.
$$

The reader can assume this ODE problem is set on $\R^{d+l},$ using the above geometric realization Theorem; in this case the identity element $e\in G$ corresponds to the zero of $\R^{d+l}.$
By Caratheodory's Theorem the above Cauchy problem has a local solution $\gamma_{u}$ and we consider the set:
$$\mathcal{U}=\{u\in L^2(I, \R^d)\,|\, \gamma_u \textrm{ is defined for $t=2\pi$}\}.$$
For general subriemannian manifolds the set $\mathcal{U}$ is an open subset of $L^2(I, \R^d)$ (by ODE's continuous dependence theorem) and is called the set of \emph{controls}; in our case the estimates for the final time can be made uniform and we actually have $\mathcal{U}=L^2(I, \mathbb{R}^{d})$. Associating to each $u$ the corresponding path $\gamma_u$ gives thus a local coordinate chart and by slightly abusing of notation in the sequel we will often identify $\Omega$ with $\mathcal{U}$.

Once we are given the Carnot group structure $W=\textrm{span}\{A_1, \ldots, A_l\}$, we can use Theorem \ref{geometric} to write down the above ODE in a more explicit form:
$$\left\{
\begin{array}{l}
\dot{x}=u \\
\dot{y}_{i}=\frac{1}{2}x^{T}A_{i}u
\end{array}
\quad\textrm{and}\quad \gamma(0)=0\right.
$$
In this framework the \emph{End-point map} is the \emph{smooth} map: $$
F: \Omega \longrightarrow G,$$
which associates to each curve $\gamma$ its final point $\gamma(2\pi).$

If we define $A=(A_{1},\ldots,A_{l}),$ we can use again Theorem \ref{geometric} and write the End-point map as:
\begin{equation}\label{endpointexplicit}
F(u)=\left(\int_{0}^{2\pi}u(t)\,dt,\frac{1}{2}\int_{0}^{2\pi}\left\langle\int_{0}^{t}u(\tau) d\tau  , A\,u(t)\right\rangle dt \right);
\end{equation}
(here the brackets denote the sub-Riemannian scalar product on the Lie algebra $\mathfrak{g}$).

In the sequel we will mainly be interested in admissible paths whose endpoints lie on $\Delta^2$ (see Proposition \ref{propo:bound} below).
Being $G$ of step two, we know that $\Delta^{2}$ is an abelian subalgebra of $\mathfrak{g};$ therefore we can identify $\Delta^{2}$ with the submanifold $\textrm{exp}(\Delta^2) \subset G$ and using Theorem \ref{geometric} we can write this identification as: 
$$\xi_{1}f_{1} + \ldots + \xi_{l}f_{l}\mapsto (\underbrace{0,\ldots,0}_{x},\underbrace{\xi_{1},\ldots,\xi_{l}}_{y}),$$
(here as above $\{f_1, \ldots, f_l\}$ is a basis of $\Delta^2$).

We study now the structure of the set of admissible paths whose endpoints are on $\Delta^2$. It turns out that in the local coordinates given by the controls $\mathcal{U}$ it coincides with the kernel of the differential of $F$ at $0 \in \mathcal{U}$
$$H=\ker D_0F,$$
as described by the following proposition.
\begin{propo}\label{propoH}
The following properties hold:
\begin{enumerate}[(a)]
\item $H=\left\{u\in L^2(I, \R^d)\, |\,  \int_I u\, dt =0 \right\}$;
\item $u \in H \Leftrightarrow F(u) \in \emph{\textrm{exp}}(\Delta^2)$;
\item $F|_{H}=\emph{He}_{0}F$.
\end{enumerate}
\end{propo}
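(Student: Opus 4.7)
The plan is to exploit the explicit formula \eqref{endpointexplicit}, which cleanly splits the End-point map as $F=(F_1,F_2)$ where $F_1(u)=\int_0^{2\pi}u\,dt$ is linear in $u$ and $F_2(u)=\frac{1}{2}\int_0^{2\pi}\langle\int_0^t u(\tau)\,d\tau,Au(t)\rangle\,dt$ is a continuous quadratic form. All three statements will then follow from this decomposition by direct inspection, using no more than the homogeneity of each piece.

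First I would prove (a). Differentiating at $u=0$, the linearity of $F_1$ gives $D_0F_1=F_1$, while the homogeneity $F_2(tu)=t^2F_2(u)$ forces $D_0F_2=0$. Hence $D_0F(u)=(F_1(u),0)$ and the kernel is exactly the set of $u\in L^2(I,\R^d)$ with $\int_I u\,dt=0$. Next I would deduce (b) essentially for free: under the identification $\exp(\dd)=\{(x,y)\,:\,x=0\}$ recorded just before the proposition, the condition $F(u)\in\exp(\dd)$ is equivalent to the vanishing of the horizontal component $F_1(u)$, which by (a) is equivalent to $u\in H$.

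For (c), the key observation is that a continuous quadratic form coincides with its own second-order Taylor polynomial at the origin, so $F_2(u)=\frac{1}{2}D_0^2F_2(u,u)$ identically on $L^2(I,\R^d)$. Combined with the vanishing of $D_0F_2$ and the linearity of $F_1$ (whose Hessian at $0$ is zero), this yields $\he_0F=(0,F_2)$ as a $\g$-valued quadratic form, and restricting to $H$ (where $F_1$ vanishes) gives $F|_H=(0,F_2|_H)=\he_0F|_H$. Viewed under the standard convention in which $\he_0F$ has natural domain $\ker D_0F=H$ and target $\coker D_0F\cong\dd$, this is precisely the asserted identity $F|_H=\he_0F$.

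I do not expect any real obstacle here: the whole proposition is a piece of bookkeeping made possible by the extreme simplicity of the End-point map in the step-two setting, which consists only of a linear plus a purely quadratic piece. The single subtle point is the convention for the Hessian of a vector-valued map, but it is sidestepped by the fact that $F_2$ is literally a quadratic form and therefore equal to its own quadratic approximation at $0$.
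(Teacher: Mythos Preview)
Your proposal is correct and follows essentially the same approach as the paper: both arguments exploit the explicit formula \eqref{endpointexplicit} to split $F$ into a linear horizontal part and a purely quadratic vertical part, compute $D_0F$ from this decomposition to get (a), read off (b) directly from the formula, and obtain (c) from the fact that a quadratic map coincides with its own Hessian at the origin. Your writeup is slightly more explicit about the $(F_1,F_2)$ decomposition and the domain/target conventions for $\he_0F$, but there is no substantive difference.
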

\begin{proof}
For point (a) we compute the differential $D_{0}F$: by taking a variation $\varepsilon v$ of the constant curve $\gamma\equiv 0$ we easily see that
$$
D_{0}Fv=\left. \frac{d}{d\varepsilon}\right|_{\varepsilon=0}\left(\varepsilon\int_{0}^{2\pi}v(t)\,dt,\frac{1}{2}\varepsilon^{2}\int_{0}^{2\pi}\left\langle\int_{0}^{t}v(\tau) d\tau , A\,v(t)\right\rangle dt  \right) = \left(\int_{0}^{2\pi}v(t)\,dt,0  \right) \in \mathfrak{g}.
$$
which proves property (a).\\
Point (b) is a direct consequence of equation (\ref{endpointexplicit}). \\
For point (c) we notice that the Hessian $\he_{0}F$ is defined on $H=\ker D_{0}F$ with values in $\coker D_{0}F = \Delta^{2}$, thus has the same range as $F|_{H}$. As in the proof of (a) if we consider the second derivative of a variation and we easily obtain $\he_0F$ has the same expression of $F$ when restricted to $H$.
\end{proof}
We denote by $q$ the Hessian of $F$ at zero, i.e. the quadratic map:
$$q \doteq F|_H:H\to \R^l.$$
Every component $q_{i}$ of $q$ is a quadratic form on $H$ and its 
explicit expression is given by:
\begin{equation} \label{explicitqi}
q_{i}(u)=\frac{1}{2} \int_{0}^{2\pi}\left\langle\int_{0}^{t}u(\tau) d\tau , A_{i}\,u(t)\right\rangle dt.
\end{equation}
By polarization we obtain the expression for the associated bilinear form:
\begin{align*}
q_{i}(u,v)=&\frac{1}{4}\left( \int_{0}^{2\pi}\left\langle\int_{0}^{t}u(\tau)+v(\tau) d\tau , A_{i}(u(t)+v(t))\right\rangle dt - \int_{0}^{2\pi}\left\langle\int_{0}^{t}u(\tau) d\tau , A_{i}\,u(t)\right\rangle dt + \right. \\
&\left. -\int_{0}^{2\pi}\left\langle\int_{0}^{t}v(\tau) d\tau , A_{i}\,v(t)\right\rangle dt \right) = \\
& = \frac{1}{4} \left(\int_{0}^{2\pi}\left\langle\int_{0}^{t}u(\tau) d\tau , A_{i}\,v(t)\right\rangle dt + \int_{0}^{2\pi}\left\langle\int_{0}^{t}v(\tau) d\tau , A_{i}\,u(t)\right\rangle dt \right) = \\
& = \frac{1}{4} \left(\int_{0}^{2\pi}\left\langle\int_{0}^{t}u(\tau) d\tau, A_{i}\,v(t)dt\right\rangle - \int_{0}^{2\pi}\left\langle v(t),A_{i}\int_{0}^{t}u(\tau) d\tau \right\rangle dt \right) = \\
& =\frac{1}{2}\int_{0}^{2\pi}\left\langle\int_{0}^{t}u(\tau) d\tau,A_{i}\,v(t)\right\rangle dt, \\
\end{align*}
where the fourth row follows from integration by parts. \\
Moreover to every $q_{i}$ it corresponds a symmetric operator $Q_{i}:H \rightarrow H$ defined by:
$$q_i(u)=\langle u, Q_i u\rangle_H\quad \textrm{for all $u\in H$}.$$
(recall that $\langle u, v\rangle_H=\int_I\langle u, v\rangle dt$). We will use the notation $Q$ for the map $(Q_{1}, \ldots, Q_{l}):H \rightarrow H \otimes \R^{l}$. Also, given a covector $\omega \in \R^{l*}$ we will denote by $\omega q, \omega Q, \omega A$ the compositions of $\omega:\R^l\to \R$ respectively with $q, Q$ and $A.$\\
In the sequel we will need to expand a control $u\in H$ into its Fourier series: we will write $u=\sum_{k \in \mathbb{N}_0}U_{k}\coskt + V_{k} \sinkt$ where $U_{k},V_{k} \in \Delta$; the constant term is zero because of part (a) of Proposition \ref{propoH} (mean zero condition).
\begin{propo} \label{operatorQstructure}
Let $T_{k}$ be the subspace of $H$ with ``wave number" $k$, namely $$T_{k}=\Delta \otimes \emph{\textrm{span}} \{\cos  kt,\sin kt\}.$$ Then we have the following:
\begin{enumerate}[(a)]
\item $H = \oplus_{k \geq 1} T_{k}$, and the sum is orthogonal with respect to the scalar product;
\item For every $\omega \in \R^{l*}$ we have $\omega Q T_{k}\subset T_{k}$ (i.e. each subspace $T_k$ is invariant by $\omega Q$);
\item Consider the orthonormal basis $\{e_i \otimes \coskt, e_{i} \otimes \sinkt \}_{i=1}^d$ for $T_k$; in this basis the matrix associated to $\omega Q|_{T_{k}}$ is:
$$
\frac{1}{k}(\omega P)\doteq\frac{1}{k}
\left(
\begin{array}{cc}
0 & \frac{1}{2}\omega A \\
-\frac{1}{2}\omega A & 0 \\
\end{array}
\right).
$$
\end{enumerate}
\end{propo}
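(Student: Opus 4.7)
Part (a) is a direct application of Fourier analysis. The plan is to invoke the classical fact that $\bigl\{\tfrac{1}{\sqrt{2\pi}},\ \coskt,\ \sinkt\bigr\}_{k\geq 1}$ is a complete orthonormal system in $L^2(I)$; tensoring with the standard orthonormal basis of $\R^d$ yields a corresponding decomposition of $L^2(I,\R^d)$. By Proposition \ref{propoH}(a), $u\in H$ is equivalent to the vanishing of the $k=0$ (constant) Fourier coefficient, so the surviving modes give the orthogonal decomposition $H=\bigoplus_{k\geq 1}T_k$.

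I would then prove (b) and (c) simultaneously by a single Fourier-side computation. Expand $u=\sum_{k\geq 1}(U_k\coskt+V_k\sinkt)$ and $v$ similarly with coefficients $U_k',V_k'\in\Delta$. Substituting into
$$\omega q(u,v)=\tfrac{1}{2}\int_0^{2\pi}\Bigl\langle\int_0^t u(\tau)\,d\tau,\ \omega A\,v(t)\Bigr\rangle dt,$$
integrating $u$ termwise (which rescales every mode by $1/k$ and produces a constant that is killed because $v$ has mean zero), and using the standard orthogonality relations for sines and cosines on $[0,2\pi]$, all cross-frequency terms cancel and one is left with
$$\omega q(u,v)=\tfrac{1}{2}\sum_{k\geq 1}\tfrac{1}{k}\bigl[\langle U_k,\omega A V_k'\rangle-\langle V_k,\omega A U_k'\rangle\bigr].$$
Since only matching Fourier frequencies are coupled, $\omega Q$ preserves each $T_k$, which is (b).

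To conclude (c), I would read the matrix directly off this identity by comparing with $\langle u,\omega Q v\rangle_H=\sum_k(\langle U_k,U_k''\rangle+\langle V_k,V_k''\rangle)$, where $(U_k'',V_k'')$ denote the Fourier coefficients of $\omega Q v$; moving $\omega A$ across the pairing via its skew-symmetry yields $U_k''=\tfrac{1}{2k}\omega A V_k'$ and $V_k''=-\tfrac{1}{2k}\omega A U_k'$, which is exactly the claimed block form in the ordered basis $\{e_i\otimes\coskt,\ e_i\otimes\sinkt\}_{i=1}^d$. The main (mild) obstacle is the bookkeeping for the constant of integration produced by antidifferentiating $\sinkt$, which does not lie in $H$; it must be discarded, and the justification is precisely the mean-zero condition on the test function $u$ in $\langle u,\omega Q v\rangle_H$, so Proposition \ref{propoH}(a) is used symmetrically on both sides of the pairing.
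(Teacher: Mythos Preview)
Your proposal is correct and follows essentially the same route as the paper: part (a) is Fourier decomposition plus the mean-zero condition from Proposition~\ref{propoH}(a), and parts (b)--(c) are obtained by substituting Fourier expansions into the bilinear form $\omega q(u,v)=\tfrac{1}{2}\int_0^{2\pi}\langle\int_0^t u,\,\omega A\,v\rangle\,dt$ and invoking orthogonality of the trigonometric system. The only cosmetic difference is that the paper fixes $u\in T_k$ (a single mode) and lets $v$ be general, which shortens the bookkeeping; your version expands both fully, which is equivalent.
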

\begin{proof}
Point (a) is just Fourier decomposition theorem; $k\geq 1$ expresses the mean zero condition.\\
For the other two points, let us consider $u \in T_{n}$ and $v \in H$, with Fourier series respectively $u=U\coskt + V \sinkt$ and $v=\sum_{n \geq 1} U_{n} \frac{1}{\sqrt{\pi}}\cos nt + V_{n}\frac{1}{\sqrt{\pi}}\sin nt$. By a direct computation we have
\begin{align*}
\langle u,\omega Q v \rangle_H & = \int_{0}^{2\pi}\left\langle\int_{0}^{t} U \frac{1}{\sqrt{\pi}} \cos k\tau + V\frac{1}{\sqrt{\pi}}\sin k\tau \, d\tau,\frac{1}{2} \omega A v\right\rangle dt= \\
&=\sum_{n \geq 1}\int_{0}^{2\pi}\frac{1}{k}\left\langle U\sinkt - V\coskt, \frac{1}{2}\omega A \left(U_{n}\frac{1}{\sqrt{\pi}} \cos nt  + V_{n} \frac{1}{\sqrt{\pi}}\sin nt\right)\right\rangle dt= \\
& = \frac{1}{k}\int_{0}^{2\pi}-\frac{1}{\pi}(\cos kt)^2\left\langle V,\frac{1}{2}\omega A U_{k}\right\rangle + \frac{1}{\pi}(\sin kt)^2\left\langle U,\frac{1}{2}\omega A V_{k}\right\rangle dt= \\
&=\frac{1}{k}\bigg(-\left\langle V,\frac{1}{2}\omega A U_{k}\right\rangle +\left\langle U,\frac{1}{2} \omega A V_{k}\right\rangle \bigg),
\end{align*}
where the equality between second and third row holds because the only non-zero integrals of products of sines/cosines are
$\int_{0}^{2\pi}\frac{1}{\pi}(\cos kt)^2 \, dt = \int_{0}^{2\pi}\frac{1}{\pi}(\sin kt)^2 \, dt = 1.$
\end{proof}
\begin{remark}
We notice that for every $\omega \in (\Delta^2)^*$ the operator $\omega Q$ is compact. Indeed, it is the limit of a converging series of operators with finite-dimensional image:
$$
S_{n} = \sum_{i=1}^{n} \omega Q |_{T_{i}}.
$$
Let us prove that the \emph{operator} norm of $\omega Q-\omega S_n$ goes to zero. Given a norm one $v=\sum_{k\geq 1} v_k,$ we have:
\begin{align}\nonumber\|(\omega Q - \omega S_{n})v\|^2&=\sum_{k\geq n+1}\|\omega Q|_{T_k}v\|^2\leq\sum_{k\geq n+1}\frac{4\|\omega P\|_{\textrm{op}}^2\|v_k\|^2}{k^2}\\
&\nonumber\leq \frac{4\|\omega P\|_{\textrm{op}}^2}{(n+1)^2}\sum_{k\geq n+1}\|v_k\|^2\leq \frac{4\|\omega P\|_{\textrm{op}}^2}{(n+1)^2}.\end{align}
In particular, taking square roots, $\|(\omega Q - \omega S_{n})v\|\leq \frac{4\|\omega P\|_{\textrm{op}}}{(n+1)},$ i.e.~$\|\omega Q - \omega S_{n}\|_{\textrm{op}}\to0$
\end{remark}
We conclude this chapter by describing the spectrum of the operator $\omega Q$. Given $\omega Q$ we consider as above the skew-symmetric matrix $\omega A,$ which can be put in canonical form as a block matrix of the form $\diag\left(\alpha_1(\omega) J_{2},\ldots,\alpha_m(\omega) J_2, 0_n\right)$, where $J_{2} \in \mathfrak{so}(2)$ is the standard symplectic matrix and $0_n$ is the $n \times n$ zero matrix. More precisely on $\Delta$ we can find an orthonormal basis $\{X_{i},Y_{i},Z_{j},\,i=1,\ldots,m,\,j=1,\ldots,n  \}$ for suitable $m,n \in \mathbb{N}$ satisfying $2m+n=d$ such that:
$$
\omega A X_{i} = -\alpha_{i}(\omega) Y_{i}, \quad \omega A Y_{i} = \alpha_{i}(\omega) X_{i}, \quad \omega A Z_{j}=0.
$$
Let us consider now the operator $\omega Q$ restricted to $T_{k}$. Using the basis for $\Delta$ defined above we get the orthogonal basis $$\left\{\left(\begin{array}{c} X_{i} \\ Y_{i} \end{array} \right),\left(\begin{array}{c} -Y_{i} \\ X_{i} \end{array} \right),\left(\begin{array}{c} X_{i} \\ -Y_{i} \end{array} \right),\left(\begin{array}{c} Y_{i} \\ X_{i} \end{array} \right),\left(\begin{array}{c} Z_{j} \\ 0 \end{array} \right),\left(\begin{array}{c} 0 \\ Z_{j} \end{array} \right)\right\},\ i=1,\ldots,m,\ j=1,\ldots,n,$$ with eigenvalues $\frac{\alpha_{i}(\omega)}{k},\frac{\alpha_{i}(\omega)}{k},-\frac{\alpha_{i}(\omega)}{k},-\frac{\alpha_{i}(\omega)}{k},0,0$ respectively. Thus we have proved:
\begin{propo} \label{eigens}
The non-zero eigenvalues of the operator $\omega Q$ are $\pm \frac{\alpha_{i}(\omega)}{k}$ with multiplicity two, where $\alpha_{i}(\omega)$ are the coefficients of the canonical form of $\omega A$ and $k \in \mathbb{N}_{0}$.
\end{propo}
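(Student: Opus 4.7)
My plan is to reduce the computation of the spectrum of $\omega Q$ to a finite-dimensional linear algebra problem and then read off the answer from the canonical form of $\omega A$. By Proposition \ref{operatorQstructure}(a)--(b), the Hilbert space $H$ decomposes orthogonally as $H=\bigoplus_{k\geq 1}T_k$ with each summand invariant under $\omega Q$, hence the spectrum of $\omega Q$ is the union of the spectra of its restrictions $\omega Q|_{T_k}$. By part (c) of the same proposition, in the orthonormal basis $\{e_i\otimes\coskt,e_i\otimes\sinkt\}_{i=1}^d$ the operator $\omega Q|_{T_k}$ is represented by the $2d\times 2d$ block matrix $\tfrac{1}{k}(\omega P)$, so it suffices to find the spectrum of $\omega P$ on $\Delta\oplus\Delta$ and then divide each eigenvalue by $k$.

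For the finite-dimensional step I would invoke the real normal form of skew-symmetric matrices: since $\omega A\in\mathfrak{so}(d)$, there is an orthonormal basis $\{X_i,Y_i,Z_j\}$ of $\Delta$ (with $2m+n=d$) satisfying $\omega A X_i=-\alpha_i(\omega) Y_i$, $\omega A Y_i=\alpha_i(\omega) X_i$ and $\omega A Z_j=0$, as already recorded in the text preceding the statement. Using the block form
\[
\omega P=\begin{pmatrix} 0 & \tfrac{1}{2}\omega A \\ -\tfrac{1}{2}\omega A & 0\end{pmatrix},
\]
I would plug in the six candidate vectors $(X_i,Y_i)^{T},(-Y_i,X_i)^{T},(X_i,-Y_i)^{T},(Y_i,X_i)^{T},(Z_j,0)^T,(0,Z_j)^T$ and verify by a one-line computation that they are eigenvectors with eigenvalues $\tfrac{1}{2}\alpha_i(\omega),\tfrac{1}{2}\alpha_i(\omega),-\tfrac{1}{2}\alpha_i(\omega),-\tfrac{1}{2}\alpha_i(\omega),0,0$ respectively; orthogonality of $\{X_i,Y_i,Z_j\}$ in $\Delta$ immediately gives linear independence of the $2d$ vectors so exhausted, so these are all the eigenvalues of $\omega P$.

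Dividing by $k$ and collecting across all $k\geq 1$ then yields the claim: the nonzero eigenvalues of $\omega Q$ are exactly $\pm \alpha_i(\omega)/k$, each with multiplicity two, for $i=1,\dots,m$ and $k\in\mathbb{N}_0$ (the $Z_j$ directions give a contribution to $\ker \omega Q$ on every level $T_k$, compatible with compactness of $\omega Q$ already noted in the previous remark).

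There is essentially no obstacle: the only thing to be careful about is matching conventions. Concretely, one must confirm that the orthonormal basis of $T_k$ chosen in Proposition \ref{operatorQstructure}(c) is the one used to represent the listed trial vectors, and that the factor $\tfrac12$ in front of $\omega A$ in the block matrix is consistent with the eigenvalues $\pm\alpha_i(\omega)/k$ quoted in the statement (so the correct bookkeeping of normalization constants of the Fourier basis and of $J_2$ is needed). Once this is fixed, the proposition is an immediate consequence of putting $\omega A$ in canonical form.
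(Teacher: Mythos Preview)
Your approach is essentially identical to the paper's: both use Proposition~\ref{operatorQstructure} to reduce to the finite-dimensional block $\tfrac{1}{k}\omega P$ on each $T_k$, put $\omega A$ in canonical form via the basis $\{X_i,Y_i,Z_j\}$, and exhibit exactly the same six families of eigenvectors. Your caution about the factor $\tfrac12$ is well-placed, since with the block matrix as written in Proposition~\ref{operatorQstructure}(c) one literally obtains eigenvalues $\pm\alpha_i(\omega)/(2k)$ rather than $\pm\alpha_i(\omega)/k$; this is a harmless normalization mismatch in the paper and does not affect any subsequent argument.
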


\section{Geodesics}\label{sec:geodesics}
\subsection{The general structure of geodesics}
In this section we introduce the \emph{admissible path space}; given a point $p\in G$ it is defined as:
$$
\Omega_p =\{\textrm{admissible curves starting at the origin and ending at $p$}\}=F^{-1}(p).$$
\begin{remark}It will be useful for us to set $\Sigma_1=\overline{\Sigma'_1}$. Since the euclidean closure is smaller than the Zariski closure and dimension of a semialgebraic set is preserved after taking its Zariski closure, then $\Sigma_1$ is \emph{closed} and contained in a semialgebraic set of codimension one. Points in the complement of $\Sigma_1$ are an open dense set of regular values of $q=F|_H.$
\end{remark}
Since in the space $\Omega$ of admissible paths we are allowed to compute velocities and their lengths, we define the \emph{Energy} functional:
$$
J:  \Omega  \longrightarrow  \R,$$
by associating (as in Riemannian geometry) to each curve $\gamma$ the integral $\int_{0}^{2\pi}\|\dot{\gamma}(t)\|^{2} dt.$

In the case $p$ is a regular value of the End-point map $F$, then a critical point of $J|_{F^{-1}(p)}$ is called a \emph{normal geodesic}.

Theorem \ref{regularvalue} allows then to study the structure of (normal) geodesics whose endpoint is the generic $p$; in fact they are by definition critical points of the restriction of $J$ to $\Omega_p$ and in the case the latter is a Hilbert manifold are defined by the Lagrange multipliers rule. 
Using the above coordinates $G\simeq \mathbb{R}^d\oplus \R^l$ we can decompose a vector $\lambda\in T^*G$ as $\lambda=\eta+\omega, $ where $\eta$ is the ``horizontal'' part and $\omega \in (\Delta^2)^*$ is the ``vertical'' one. We have the following proposition.
\begin{propo}\label{propo:multiplier}Let $u$ be the control associated to a geodesic with Lagrange multiplier $\lambda\in T^*G$ (i.e. $\lambda d_uF=d_u J$). Then:
$$u(t)=e^{-(\omega A)t}u_0\quad \textrm{and}\quad 2\eta=(e^{-2\pi \omega A}+\mathbbm{1})u_0.$$
\begin{proof}A simple computation using \eqref{endpointexplicit} in $\lambda d_uF=u$ gives:
$$u(t)=\eta-(\omega A)\int_{0}^tu(s)ds+\frac{1}{2}\int_0^{2\pi}(\omega A)u(s)ds.$$
Differentiating the above equation provides $\dot{u}=-(\omega A)u$, which in turn implies $u(t)=e^{-(\omega A)t}u_0.$
Substituting the explicit expression $u(t)=e^{-(\omega A)t}u_0$ into the same equation and evaluating at zero gives $u_0=\eta-\frac{1}{2}u(2\pi)+\frac{1}{2}u_0$.
\end{proof}
\end{propo}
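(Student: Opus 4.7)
\medskip

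\noindent\textbf{Proof plan.} The plan is to make the Lagrange multiplier identity $\lambda\circ d_uF = d_uJ$ explicit in $L^2(I,\R^d)$, deduce a linear ODE for $u$, and then read off the relation between $\eta$ and $u_0$ from the initial condition.

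\smallskip

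\noindent First I would compute $d_uF$ directly from the explicit expression \eqref{endpointexplicit}. Splitting the multiplier as $\lambda = \eta+\omega$ with $\eta\in \Delta^*$ and $\omega\in(\dd)^*$, the horizontal part of $\lambda\, d_uF(v)$ is just $\langle \eta,\int_0^{2\pi}v(t)\,dt\rangle$, while the vertical part is
\[
\frac{1}{2}\int_0^{2\pi}\Bigl\langle \int_0^t u(\tau)d\tau,\,(\omega A) v(t)\Bigr\rangle dt+\frac{1}{2}\int_0^{2\pi}\Bigl\langle \int_0^t v(\tau)d\tau,\,(\omega A)u(t)\Bigr\rangle dt.
\]
The first of these two integrals can be handled pointwise using the skew-symmetry of $\omega A$ (moving it to the other slot produces a sign). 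For the second I would apply Fubini (or equivalently integration by parts) to swap the order of integration and express $\int_0^{2\pi}\langle V(t),(\omega A)u(t)\rangle dt$ as $\int_0^{2\pi}\langle v(\tau),\int_\tau^{2\pi}(\omega A)u(t)dt\rangle d\tau$. Collecting everything against the test function $v$ produces, as on the right-hand side, the expression
\[
u(t)=\eta-(\omega A)\int_0^t u(s)\,ds+\frac{1}{2}(\omega A)\int_0^{2\pi}u(s)\,ds,
\]
since $d_uJ(v)=\langle u,v\rangle_H$ via the $L^2$ Riesz identification.

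\smallskip

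\noindent The next step is to differentiate this identity in $t$: the constant terms drop out and we are left with the linear ODE $\dot u(t)=-(\omega A)u(t)$, which is solved by $u(t)=e^{-t\omega A}u_0$. This immediately yields the first assertion.

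\smallskip

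\noindent For the boundary relation I would evaluate the integro-differential identity at $t=0$, giving $u_0=\eta+\tfrac{1}{2}(\omega A)\int_0^{2\pi} e^{-\tau\omega A}u_0\,d\tau$. Using the operator identity $(\omega A)\int_0^{2\pi}e^{-\tau \omega A}d\tau=\mathbbm{1}-e^{-2\pi\omega A}$ (which is valid whether or not $\omega A$ is invertible, being just the fundamental theorem of calculus applied to $-e^{-\tau\omega A}$), this becomes $u_0=\eta+\tfrac{1}{2}(\mathbbm{1}-e^{-2\pi\omega A})u_0$, i.e.\ $2\eta=(\mathbbm{1}+e^{-2\pi\omega A})u_0$, which is the second claim.

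\smallskip

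\noindent The only step requiring care is the rewriting of $\lambda\,d_uF$ as pointwise multiplication by a function of $t$: the two symmetric-looking pieces in the vertical derivative have to be handled differently (one by skew-symmetry, the other by Fubini), and one has to check that the leftover constant term $\tfrac12(\omega A)\int_0^{2\pi}u$ arises with the correct sign. After that the derivation of the ODE and the $t=0$ evaluation are purely formal.
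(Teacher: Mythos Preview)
Your proof is correct and follows essentially the same approach as the paper: derive the integral identity for $u(t)$ from the Lagrange multiplier condition, differentiate to obtain the ODE $\dot u=-(\omega A)u$, and evaluate at $t=0$ to get the relation between $\eta$ and $u_0$. Your version is in fact more explicit about the Fubini/skew-symmetry manipulations and about the operator identity $(\omega A)\int_0^{2\pi}e^{-\tau\omega A}\,d\tau=\mathbbm{1}-e^{-2\pi\omega A}$, which the paper leaves implicit.
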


If $p$ is not a vertical point and the rank is sufficiently big ($d>l$), then the number of geodesics joining the origin to $p$ is bounded. In order to prove this statement we need a preliminary lemma.

\begin{lemma}For the generic point $p\in G$, the set of geodesics starting at the origin and ending at $p$ is discrete. 
\end{lemma}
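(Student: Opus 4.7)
The plan is to use Proposition \ref{propo:multiplier} to embed the space of geodesic controls into a finite-dimensional parameter space of the same dimension as $G$, and then invoke Sard's theorem.

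By Proposition \ref{propo:multiplier}, every geodesic has control of the form $u(t) = e^{-(\omega A)t}u_0$ for some $(\omega, u_0)\in \dds\times \Delta$; conversely, each such pair defines a geodesic with Lagrange multiplier $\lambda = \frac{1}{2}(e^{-2\pi\omega A}+\mathbbm{1})u_0 + \omega$. This provides a real-analytic map
$$\Phi : \dds\times\Delta \longrightarrow G,\qquad \Phi(\omega,u_0) = F\bigl(e^{-(\omega A)\cdot}u_0\bigr),$$
whose domain and codomain both have dimension $l+d$. By Sard's theorem, the set of critical values of $\Phi$ has Lebesgue measure zero in $G$, so a generic $p\in G$ is a regular value; at every point of $\Phi^{-1}(p)$ the matching of dimensions forces $d\Phi$ to be a linear isomorphism, and the inverse function theorem implies that $\Phi^{-1}(p)$ is discrete in $\dds\times\Delta$.

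To conclude, one identifies the set of geodesic controls ending at $p$ with the image of $\Phi^{-1}(p)$ under the real-analytic parameterization $\iota:(\omega,u_0)\mapsto e^{-(\omega A)t}u_0\in L^2(I,\R^d)$. Since $u_0=u(0)$ recovers $u_0$ from the analytic control $u$, and for generic parameters $\omega\mapsto \omega Au_0$ is injective, $\iota$ is a local homeomorphism onto its image, so discreteness transfers to $\Omega_p\subset L^2(I,\R^d)$.

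The main obstacle I foresee is this last topological transfer from the finite-dimensional parameter space to $L^2$. One must rule out both that distinct parameters collapse to a single control (which would happen if $(\omega-\omega')A$ annihilated the cyclic subspace generated by $u_0$), and that an unbounded sequence in $\Phi^{-1}(p)$ produces an $L^2$-convergent sequence of geodesic controls. The first possibility is excluded on a Zariski-open subset of the parameter space; the second is ruled out by the identity $\|e^{-(\omega A)t}u_0\|_{L^2}^2 = 2\pi\|u_0\|^2$, together with the fact that as $\|\omega\|\to\infty$ the Fourier mass of $e^{-(\omega A)t}u_0$ concentrates at high frequencies, preventing $L^2$-convergence to any fixed function.
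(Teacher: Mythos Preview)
Your argument is essentially the paper's: parametrize geodesics by $(\omega,u_0)\in\R^l\times\R^d$, apply Sard's theorem to the resulting endpoint map into $G$, and conclude that the fiber over a regular value is a zero-dimensional submanifold. The paper stops exactly there, tacitly identifying a geodesic with its parameter pair $(\omega,u_0)$; it makes no attempt to transfer discreteness to the control space $L^2(I,\R^d)$ as you do in your last two paragraphs. For the paper's purposes this suffices, since the lemma is invoked only in the next proposition, where one works directly with the finite-dimensional parameters.

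Your additional $L^2$ discussion is a legitimate refinement, but as you yourself flag, it is not fully closed. In particular the high-frequency argument does not obviously handle the case where $\omega_n/\|\omega_n\|$ accumulates on a direction $\lambda$ for which $\lambda A$ has nontrivial kernel: then a component of $u_{0,n}$ can sit in eigenspaces of $\omega_n A$ with bounded eigenvalue, so the Fourier mass need not escape to infinity. Likewise, the injectivity claim ``for generic parameters $\omega\mapsto\omega A u_0$ is injective'' is a statement about generic $u_0$, not about the specific discrete set $\Phi^{-1}(p)$, so one would still need to exclude the bad $u_0$'s from the fiber by a further genericity argument on $p$. None of this affects the core Sard step, which is all the paper actually proves.
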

\begin{proof}
We can parametrize geodesics with their initial covector $\omega$ and the initial velocity $u_0$; in this way we obtain a smooth map:
$$f:\R^{l}\times \R^d\to G$$
defined by:
$$(\omega, u_0)\mapsto \left(\int_I e^{-t\omega A}u_0 dt,  \int_I \left\langle \int_{0}^{t}e^{-s\omega A}u_0\, ds, A e^{-t\omega A}u_0\right\rangle\, dt\right).$$
If $p$ is a regular value of $f$ (and the set of such $p$ is a residual set) then $f^{-1}(p)$ is a submanifold of $\R^{d+l}$ of dimension zero (possibly noncompact).
\end{proof}

\begin{propo}\label{propo:bound}
Assume that $d>l$.  Then  for the generic $p \notin \dd$ there is a finite number of geodesics between $e$ and $p$.
\end{propo}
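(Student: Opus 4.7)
The plan is to parametrize geodesics using Proposition \ref{propo:multiplier}: each geodesic starting at $e$ is of the form $u(t) = e^{-t\omega A}u_0$, so the set of geodesics terminating at $p$ coincides with $f^{-1}(p)$, where $f(\omega, u_0) \doteq F(e^{-\cdot\,\omega A}u_0)$ is the smooth map $f\colon \R^l\times \R^d \to G$ introduced in the preceding lemma. That lemma already yields discreteness of $f^{-1}(p)$ for generic $p$; hence it is enough to show that, for generic $p\notin \dd$, the set $f^{-1}(p)$ is \emph{bounded} in $\R^{l}\times \R^d$, whence finite.

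To prove boundedness I would exploit the horizontal component of the end-point equation. Write $f=(f_1,f_2)$; then $f_1(\omega,u_0) = M(\omega)u_0$ with $M(\omega)\doteq\int_0^{2\pi}e^{-t\omega A}\,dt$. The operator $M(\omega)$ acts as $2\pi\,\Id$ on $\ker(\omega A)$, while on $\mathrm{range}(\omega A)$ its nonzero singular values equal $2|\sin(\pi\alpha_j(\omega))|/|\alpha_j(\omega)|$, where $\pm i\alpha_j(\omega)$ are the nonzero eigenvalues of $\omega A$. Since each $\alpha_j$ is homogeneous of degree one in $\omega$, these singular values decay like $1/\|\omega\|$ when $\omega$ stays uniformly away from the resonant locus $\Lambda$. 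Therefore, assuming $x_p$ (the horizontal part of $p$) has a uniformly bounded-below component along $\mathrm{range}(\omega A)$, any solution of $M(\omega)u_0=x_p$ must satisfy $\|u_0\|\ge c\,\|\omega\|$ for some $c>0$ and $\|\omega\|$ large.

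I would then combine this lower bound with the identity $J(u)=\pi\|u_0\|^2$ and the Lagrange-multiplier expansion $2J(u)=\eta(x_p)+2\omega(y_p)$, together with $\|\eta\|\le\|u_0\|$ (which follows from $2\eta=(\Id+e^{-2\pi\omega A})u_0$, since the second factor is an isometry). The left-hand side of the energy identity then grows like $\|\omega\|^2$ while the right-hand side grows at most linearly in $\|\omega\|$; this contradiction for large $\|\omega\|$ confines $\omega$ to a compact set, and therefore $u_0 = M(\omega)^{-1}x_p$ is bounded as well, proving compactness of $f^{-1}(p)$.

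The main obstacle is justifying the ``generic $p$'' hypothesis supporting the scheme above: one must guarantee that $x_p$ lies outside the semialgebraic cone $\bigcup_\omega\ker(\omega A)\subset \R^d$, and also that the sequence of $\omega$'s reaching $p$ does not converge to the resonant locus $\Lambda$ fast enough to spoil the singular-value estimate on $M(\omega)$. This is where the hypothesis $d>l$ enters: for a generic choice of $W\subset\sod$ it ensures that $\bigcup_\omega\ker(\omega A)$ has codimension at least one in $\R^d$, so that the generic $x_p$ avoids it. Finally, contributions from resonant multipliers $\omega\in\Lambda$ have to be excluded by a separate semialgebraic argument: since $\Lambda$ is a countable union of algebraic hypersurfaces in $\R^l$, the set of endpoints reached by resonant geodesics is contained in a countable union of proper semialgebraic subsets of $G$, and is therefore avoided by the generic $p$.
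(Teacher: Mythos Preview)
Your overall strategy---discreteness from the preceding lemma plus boundedness of $(\omega,u_0)$---is exactly the paper's, and your two key identities $J(u)=\pi\|u_0\|^2$ and $2J(u)=\eta(x_p)+2\omega(y_p)$, together with $\|\eta\|\le\|u_0\|$, are correct and reproduce the paper's computation $\omega(p_{\dd})=\pi\|u_0\|^2-\tfrac12\langle u_0,p_\Delta\rangle$. The generic condition you isolate, that $x_p$ avoid the cone $\bigcup_{\omega\neq0}\ker(\omega A)$, is precisely the one the paper uses, with the same dimension count (for generic $W$ this cone has dimension at most $l<d$). So the substance is right and close to the paper.

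The one genuine confusion is your treatment of the resonant locus $\Lambda$. Proximity to $\Lambda$ does \emph{not} spoil the singular-value estimate---it improves it: the singular values $2|\sin(\pi\alpha_j)|/\alpha_j$ only get smaller near a resonance, forcing $\|u_0\|$ to be even larger. If some $\alpha_j(\omega)$ is exactly an integer then $M(\omega)$ annihilates $V_j(\omega)$, so either $x_p\perp V_j(\omega)$ or there is simply no geodesic with that multiplier ending at $p$. The real obstacle, which you name but conflate with resonance, is that along a diverging sequence with $\hat\omega_n\to\lambda$, the eigenvalues corresponding to $\ker(\lambda A)$ stay bounded and the associated singular values of $M(\omega_n)$ do \emph{not} decay; this is harmless exactly when $x_p\notin\ker(\lambda A)$, which is your generic hypothesis. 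The paper runs the same compactness argument: pass to $\hat\omega_n\to\lambda$, show the components of $p_\Delta$ along eigenspaces with $\alpha_i(\lambda)>0$ are forced to vanish, conclude $p_\Delta\in\ker(\lambda A)$, and invoke $d>l$.

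Consequently your final paragraph's ``separate semialgebraic argument'' excluding endpoints of resonant geodesics is unnecessary, and the assertion that such endpoints lie in a countable union of \emph{proper} semialgebraic subsets of $G$ is neither justified nor needed. Once $\omega$ is bounded you also cannot write $u_0=M(\omega)^{-1}x_p$ (since $M(\omega)$ may be singular on a compact set), but this is easily repaired: your own energy identity already gives $2\pi\|u_0\|^2\le\|u_0\|\,\|x_p\|+2\|\omega\|\,\|y_p\|$, which bounds $\|u_0\|$ whenever $\|\omega\|$ is bounded---and this is exactly how the paper closes the argument.
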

 \begin{proof}
 
We already know by the previous lemma that the set of geodesics (i.e. pairs $(\omega, u_0)$ such that $f(\omega, u_0)=p$) is discrete for the generic $p$. We will exclude that the set of possible Lagrange multipliers is unbounded: this will imply (see below) that the set of initial velocities is bounded as well, hence the set of geodesics to $p$ is a discrete set in compact, i.e. it is finite.

If $\omega$ is a Lagrange Multiplier, we can always choose a basis for $\Delta$ such that the matrix $\omega A$ appears in canonical form: so we get $k$ subspaces of $\Delta$ of dimension two on which the matrix $\omega A$ is of the form $\alpha_i J_2$ (where $J_2$ is the standard $2\times 2$ symplectic matrix on the $i$-th eigenspace and $\alpha_i > 0$), for $i=1,\ldots,k$; for every subspace we take the component $u_{0}^{i}$ of the initial velocity $u_{0}$. Since the eigenspaces are orthogonal, the computations for the end-point of the geodesic can be performed separately: thus we split the horizontal part of the end-point $p_\Delta$ into the components of the eigenspaces of $\omega A$, say $p_{\Delta}^{i}$. These components are:
\begin{align*}
p^{i}_{\Delta} & =\int_{0}^{2\pi}e^{-t\omega A}u^{i}_{0} dt = \left[\frac{1}{\alpha_j} J_2 e^{-t\alpha_i J_2}u_{0}^{i}\right]_{0}^{2\pi} =\frac{1}{\alpha_j}J_2 \left(e^{-2\pi\alpha_j J_2}u_{0j} -  u_{0j}\right),
\end{align*}
 on the eigenspaces of $\omega A$. 
The norm squared of the component $p_{\Delta}^{i}$ is given by:
\begin{align*}
\|p^{i}_{\Delta}\|^2 & = \frac{1}{\alpha_{i}^{2}}\bra J_2 \left(e^{-2\pi\alpha_j J_2}u^{i}_{0} -  u^{i}_{0}\right), J_2 \left(e^{-2\pi\alpha_j J_2}u^{i}_{0} -  u^{i}_{0}\right)\ket = \\
& = \frac{2}{\alpha_{i}^{2}} \left( \|u_{0}^{i}\|^2 - \bra u_{0}^{i},e^{-2\pi\alpha_i J_2}u^{i}_{0}\ket \right) = \frac{2}{\alpha_{i}^{2}}\|u_{0}^{i}\|^2 \left(1 - \cos (2\pi \alpha_i) \right)
\end{align*}
The vertical part is more complicated, but we only need to compute its value in the direction of the Lagrange multiplier $\omega$:
\begin{align*}
\omega(p_{\dd}) & = \frac{1}{2}\int_{0}^{2\pi}\bra \int_{0}^{t}e^{-\omega A s}u_{0}ds,\omega A e^{-\omega A t}u_0\ket dt = \\
& =\frac{1}{2}\int_{0}^{2\pi}\bra \int_{0}^{t}-\omega Ae^{-\omega A s}u_{0}ds, e^{-\omega A t}u_0\ket dt = \\
&=\frac{1}{2}\int_{0}^{2\pi}\bra e^{-\omega A t}u_{0}-u_{0}, e^{-\omega A t}u_0\ket dt = \pi \|u_0\|^2 -\frac{1}{2}\bra u_0,p_{\Delta}\ket.
\end{align*}
The equation
\begin{equation} \label{eq:omegaperp}
\omega(p_{\dd}) = \pi \|u_0\|^2 -\frac{1}{2}\bra u_0,p_{\Delta}\ket
\end{equation}
is important at first because it tells that it is enough to prove that the set of Lagrange multipliers is bounded. Indeed, we have
$$
\|\omega\|\|p_{\dd}\| \geq \pi \|u_0\|^2 -\frac{1}{2}\bra u_0,p_{\Delta}\ket;
$$
if the set of Lagrange multipliers is bounded, the set of initial velocities cannot be unbounded otherwise the second term of the inequality would diverge while being limited by a constant.

In order to prove that the set of Lagrange multipliers of the generic end-point is bounded, we suppose on the contrary that we have a sequence $\omega_n$ of Lagrange multipliers; this sequence is forced to diverge since the set of the initial data $(\omega,u_0)$ for the generic end-point $p$ is discrete.
Up to subsequences, we may assume that the normalized Lagrange multipliers $\hat{\omega}_n = \omega_n / \|\omega_n\|$ converge to a covector $\lambda$; moreover we can assume the rescaled eigenvalues $\hat{\alpha}_{i,n} = \alpha_{i,n}/\|\omega_n\|$ and the corresponding eigenspaces converge. The first one is true since every eigenvalue is bounded by the norm of the matrix which is 1 by definition; the second one follows from the fact that the set of changes of basis is the orthogonal group which is compact.
For every sequence of eigenvalues $\hat{\alpha}_{i,n}$ that doesn't converge to $0$ we find a positive real number $c$ such that $c\,\omega_n \leq \alpha_{i,n}$: if we split the second term of the equation \eqref{eq:omegaperp} into its components given by the eigenspaces of $\omega A$:
$$
\omega(p_{\dd}) = \sum_{i=1}^{k} \pi \|u_{0}^{i}\|^2 - \frac{1}{2}\bra u_{0}^{i},p_{\Delta}^{i}\ket.
$$
Therefore we have:
$$
\|p_{\dd}\| \geq \hat{\omega}_n(p_{\dd}) \geq \pi\frac{\|u_{0,n}^{i}\|^2}{\|\omega_n\|}-\frac{\bra u_{0,n}^{i},p_\Delta\ket}{\|\omega_n\|}
$$
and if $\|u_{0,n}^{i}\|$ diverges, the last term of the previous inequality is asymptotic to its first addendum, which turns out to be bounded. 
Then for the corresponding component of the horizontal part of the end-point we have that:
$$
\|p_{\Delta,n}^{i}\|^2 = \frac{2}{\alpha_{i,n}^{2}}\|u_{0,n}^{i}\|^2(1-\cos(2\pi \alpha_{i,n})) \leq 2\frac{\|u_{0,n}^{i}\|^2}{\|\omega_n\|^2}.
$$
If $\|u_{0,n}^{i}\|$ is bounded the last term of the inequality converges to 0; if it diverges we have that
$$
\|p_{\Delta,n}^{i}\|^2 \leq 2\frac{\|u_{0,n}^{i}\|^2}{\|\omega_n\|^2} \leq \textrm{const.} \frac{1}{\|\omega_n\|}
$$
which again converges to $0$. 
So, in the limit, the components of the horizontal part of the end-point are orthogonal to the eigenspaces of the limit matrix $\lambda A$ with non-zero eigenvalues. 

Now, if the matrix $\lambda A$ is not singular the horizontal part has to be zero contradicting the hypotesis; thus in order to end the argument, we will prove that the horizontal part of the generic  point does not belong to the kernel of any degenerate matrix in the span $W$. 

Let us consider the semialgebraic set of the couples of degenerate matrices together with the vectors in their kernels, namely $$X\doteq\{(\eta,x) \in S^{l-1}(W) \times \Delta \ \mid \ \eta Ax=0,\ \dim \ker \eta A \geq 2\}$$ stratified in subsets with constant dimension of the kernel.  Assuming that the Carnot algebra structure defined by $W$ is transversal to every subset of the stratification described in the Appendix \ref{app:sodstrat} (condition satisfied by the generic $W$), the stratum $W_k$ of the matrices of $k$-dimensional kernel has codimension $k\frac{k-1}{2}$.
Therefore the semialgebraic set $X$ has dimension not greater than $\textrm{max}_{k\geq 2} l-1 - k\frac{k-1}{2}+k \leq l$.  If we project $X$ on $\Delta$ we get the set $\tilde{X}$ of points belonging to the kernel of a degenerate matrix of $W$: this is still a semialgebraic set and the projection does not increase the dimension. 
In particular the set 
$$Y=\{(p_\Delta, p_{\dd})\in G\, |\, p_\Delta\in \ker \eta A\quad \textrm{for some $\eta\neq 0$}\}$$
has dimension less than $2l$, which is smaller than $d+l=\dim (G)$ under the assumption $d>l$. This concludes the proof.\end{proof}

\begin{coro}
If $p \notin \dd$ the homology of $\Omega_{p} \cap \{ J \leq s \}$ stabilizes as $s \to \infty$.
\end{coro}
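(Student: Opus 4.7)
The plan is to deduce the corollary from Proposition~\ref{propo:bound} by a standard Morse-theoretic deformation argument on the Hilbert manifold $\Omega_p$. The genericity condition tacitly inherited from Proposition~\ref{propo:bound} guarantees that $p$ is a regular value of the endpoint map $F$, so $\Omega_p = F^{-1}(p)$ is a smooth Hilbert submanifold of $L^2(I, \R^d)$; the critical points of $J|_{\Omega_p}$ are precisely the normal geodesics from $e$ to $p$, whose number is finite by Proposition~\ref{propo:bound}.

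Letting $E$ denote the maximum critical value of $J|_{\Omega_p}$, there are no critical values in the open interval $(E, \infty)$. I would then argue that for every $s > E$ and every small $\epsilon > 0$ the sublevel $\Omega_p \cap \{J \leq s\}$ deformation retracts onto $\Omega_p \cap \{J \leq E + \epsilon\}$: granting this, $H_*(\Omega_p \cap \{J \leq s\})$ is independent of $s > E$, which is exactly the claimed stabilization.

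To produce the retraction I would flow along the negative gradient of $J|_{\Omega_p}$ (in the $L^2$ metric), suitably reparametrized so as to be complete on the critical-value-free slab $\{E + \epsilon < J \leq s\}$. Coercivity of $J(u) = \frac{1}{2}\|u\|^2$ keeps the trajectories in a bounded ball of $L^2(I, \R^d)$, and the explicit quadratic structure of $F$ recorded in Section~\ref{sec:preliminaries} provides the compactness needed to bound $\|\nabla (J|_{\Omega_p})\|$ uniformly away from zero outside a neighborhood of the finitely many critical points. Verifying this Palais--Smale style estimate is the principal technical obstacle; once it is in place, the standard deformation lemma for smooth functions on Hilbert manifolds yields the retraction and the corollary follows.
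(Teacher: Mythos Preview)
Your approach is correct and matches the paper's own treatment: the paper states this corollary with no proof, as an immediate consequence of Proposition~\ref{propo:bound} together with the standard deformation lemma (recorded later as Theorem~\ref{morsebottdeformation} in Appendix~\ref{app:morsebott}). Your outline---finitely many critical values, hence a maximum $E$, hence deformation retraction of all sublevels $\{J\leq s\}$ for $s>E$ onto a fixed one via the gradient flow---is exactly the intended argument; the Palais--Smale verification you flag as the main technical point is carried out in the paper only for vertical $p$ (Theorem~\ref{morsebottpalaissmale}), but the same compactness-of-$Q_i$ argument adapts with the extra linear component of $F$ contributing only bounded constant terms to the normal directions.
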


\subsection{Geodesics ending at $p\in \dd$}
In this section we study in more detail the case $p\in\dd$. To start with, we prove that for the generic choice of $p\in \dd$ the set $\Omega_p$ is a nice object, more precisely we have the following theorem.

\begin{teo}\label{regularvalue}For the generic choice of $W\subset \sod$ and a generic $p\in \dd$ the topological space $\Omega_p$ is a Hilbert manifold.
\end{teo}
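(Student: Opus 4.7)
The plan is to reduce the statement to showing that for the generic $p\in\dd$ the point $p$ is a regular value of $q=F|_H\colon H\to\R^l$, and then to invoke the implicit function theorem in Hilbert spaces. By Proposition~\ref{propoH}(b)--(c), for $p\in\dd$ we have $F^{-1}(p)\subset H$ and $F|_H=q$, hence $\Omega_p=q^{-1}(p)$, and it suffices to prove that $d_u q\colon H\to\R^l$ is surjective at every $u\in q^{-1}(p)$. Polarization gives $d_u q(v)=2(\langle v,Q_1 u\rangle_H,\dots,\langle v,Q_l u\rangle_H)$, so $u$ is critical if and only if $\{Q_1 u,\dots,Q_l u\}$ is linearly dependent in $H$, equivalently $(\omega Q)u=0$ for some $\omega\in\R^l\setminus\{0\}$. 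At such a $u$ the associated critical value satisfies $\omega\cdot q(u)=\langle u,(\omega Q)u\rangle_H=0$ and therefore lies in the hyperplane $\omega^{\perp}\subset\R^l$.

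Next I would unpack the structure of $\ker(\omega Q)$ via Proposition~\ref{operatorQstructure}: on each block $T_k$ the kernel equals $\ker(\omega A)\otimes\textrm{span}\{\cos kt,\sin kt\}$, so $\ker(\omega Q)$ consists of mean-zero controls taking values in $K_\omega\doteq\ker(\omega A)\subset\Delta$; explicitly, if $u=\sum_k(U_k\cos kt+V_k\sin kt)/\sqrt{\pi}$ with $U_k,V_k\in K_\omega$, then $q_i(u)=\sum_k\frac{1}{k}\langle U_k,A_i V_k\rangle$, which depends only on the induced skew forms $\tilde A_i^\omega\doteq P_{K_\omega}A_i|_{K_\omega}$. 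As $\omega$ varies over $S^{l-1}$, the dimension of $K_\omega$ is generically minimal (either $0$ or $1$ according to the parity of $d$); on this generic stratum every $\tilde A_i^\omega$ vanishes and $q|_{\ker(\omega Q)}\equiv 0$, so such $\omega$ contribute only the value $0$. Non-trivial critical values can thus only arise from the proper substrata where $\dim K_\omega\geq 2$. Using the rank stratification of $\sod$ recalled in the appendix together with the genericity of $W$ (transversality of $W$ to every stratum), one estimates the dimension of these substrata, pushes them forward by the quadratic map $q$, and concludes that the set of critical values $\Sigma_1'$ is a semialgebraic set of positive codimension in $\dd\cong\R^l$; its Euclidean closure $\Sigma_1$ is then a closed semialgebraic set contained in a codimension-one subset, as anticipated in the preceding remark.

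For any $p\in\dd\setminus\Sigma_1$ the differential $d_u q$ is surjective at every $u\in q^{-1}(p)$; since the target is finite-dimensional, $\ker d_u q$ is closed and admits a topological complement in $H$, so the implicit function theorem in Hilbert spaces shows that $\Omega_p=q^{-1}(p)$ is a smooth Hilbert submanifold of $H$, hence of $\mathcal{U}\cong\Omega$. I expect the genuine obstacle to be the codimension estimate on $\Sigma_1'$: one must carefully combine the rank stratification of $\sod$ with the transversality forced by the genericity of $W$, and then bound from above the dimension of the image of the (infinite-dimensional) critical locus of $q$ in $\R^l$. This is precisely the step where both generic hypotheses of the theorem---on $W\subset\sod$ and on $p\in\dd$---are actually used.
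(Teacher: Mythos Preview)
Your setup is exactly the paper's: reduce to regular values of $q$, characterize critical points by $(\omega Q)u=0$, use the rank stratification of $W$ induced from $\sod$, and bound the dimension of the critical-value set $\Sigma_1'$. The gap is precisely where you flag it---the codimension estimate---and the observation you actually prove is not strong enough to close it.

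You establish only that a critical value $q(u)$ with multiplier $\omega$ lies in the hyperplane $\omega^\perp$. Combined with the stratum dimension this does \emph{not} give codimension one: for instance when $l=3$ and $\dim K_\omega=2$, the stratum has dimension $2$ (a cone, so a curve after projectivizing), and the union of hyperplanes $\omega^\perp$ over such a curve can sweep out all of $\R^3$. The paper's key lemma is the much sharper inclusion
\[
q\bigl(\ker(\omega Q)\bigr)\ \subset\ (T_\omega W_r)^\perp,
\]
where $W_r\subset W$ is the rank stratum containing $\omega$. This is proved by taking a smooth curve $\omega(t)$ in $W_r$ with a smooth section $z(t)\in\ker(\omega(t)P)$, differentiating $\omega(t)P\,z(t)=0$ at $t=0$, and pairing with $z$ to get $\dot\omega(0)\bigl(q(u_k)\bigr)=0$ for every $\dot\omega(0)\in T_\omega W_r$. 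With this in hand the count is immediate: $\dim(T_\omega W_r)^\perp=l-\dim W_r$, homogeneity lets $\omega$ vary only over a set of dimension $\dim W_r-1$, and the union has dimension at most $l-1$.

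Your route via the induced skew forms $\tilde A_i^\omega$ on $K_\omega$ \emph{can} be pushed through, but it needs an extra argument you do not supply: the image $q(\ker(\omega Q))$ is the image of the linear map $\mathfrak{so}(K_\omega)\to\R^l$, $B\mapsto(\langle\tilde A_i^\omega,B\rangle)_i$, hence has dimension $\dim\mathrm{span}\{\tilde A_i^\omega\}\leq\min\bigl(l-1,\binom{k}{2}\bigr)$ (the $l-1$ coming from $\sum_i\omega_i\tilde A_i^\omega=0$). Balancing this against the stratum codimension $\binom{k}{2}$ and using homogeneity does yield $\dim\Sigma_1'\leq l-1$. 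This is a legitimate alternative to the paper's tangent-space argument, but it is a real computation, not something one can just ``push forward and conclude.''
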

\begin{proof}
We will prove that the set of critical values of $q=F|_{F^{-1}(\dd)}$ is contained in a semialgebraic subset of codimension one, from this the conclusion follows.

We first notice that the condition for $p\in \dd$ to be a critical value of $q$ is that there exist $u\in H$ and $\omega\in {(\dd)}^*$ such that $q(u)=p$ and $dq_u=2\omega Qu=0.$ In particular if $u=\sum_{k=1}^{\infty}u_k$, where $u_k\in T_k\simeq \R^{2d}$, then $q(u)=\sum_{k=1}^{\infty}\langle u_k,\frac{1}{k}(\omega P)u_k \rangle$ and the condition that $\omega Qu=0$, by invariance of the spaces $T_k$, reads $\omega P u_k=0$ for all $k\geq 1$.

Let us consider the stratification $\sod=\coprod S_r$, where $S_r$ is the set of matrices with constant rank $r$. Each $S_r$ is smooth and over it we have the smooth bundle $K_r=\{(A, v)\in \sod \times \R^n\,|\, Av=0\}$. Since this stratification is homogeneous, then the generic $W$ is transversal to all strata and we have an induced stratification:
$$W=\coprod_{r=0}^d W_r, \quad W_r=S_r\cap W.$$
Consider now the vector bundle $K|_{W_r}$ over $W_r$ (the restriction of $K_r$); notice that for every $\omega A \in W$ we have $\textrm{ker}(\omega P)=\{(x,y)\in \R^{2d}\,|\, \omega Ax=\omega Ay=0\}.$ In particular a smooth section of $K|_{W_r}$ produces also a smooth section of $\{(\omega, z)\in W_r\times \R^{2d}\,|\, \omega P z=0\}$ over $W_r$.

We notice now the following interesting property: if $p\in \dd$ is a critical point for $q$ with Lagrange multiplier $\omega\in W_r$, then $p\in {(T_\omega W_r)}^{\perp}.$ In fact for every $k\geq 1$ let us consider a smooth curve $\omega(t)$ in $W_r$ with $\omega(0)=\omega$, $\dot{\omega}(0)=\eta\in T_\omega W_j$ and a smooth $z(t)\in \ker (\omega(t)\omega P)$ with $z(0)=u_k$ such that $$\omega(t)\omega Pz(t)=0,$$ (the existence of such a smooth $z(t)$ follows from the above discussion). Then deriving the above equation we get $\dot{\omega}(0)\omega Pz(0)+\omega(0)\omega P\dot{z}(0)=0$ and considering the scalar product with $z$ gives:
$$\langle z, \dot{\omega}(0)\omega P z\rangle=\dot{\omega}(0)(q(u_k))=0$$
which tells $\eta(p)=\sum_{k=1}^{\infty}\eta(q(u_k))$ vanishes for every $\eta$ in $T_\omega W_r$. We consider now the semialgebraic set:
$$\Sigma'_1=\bigcup_{r=0}^d\big( \bigcup_{\omega\in W_r}(T_\omega W_r)^{\perp}\big).$$
Because of the above argument all critical points of $q$ are contained in $\Sigma_1'$ and we want to show this set is of dimension strictly less than $l$.

We first check that $\Sigma'_1$ is indeed semialgebraic (and as a result we compute its dimension); being a finite union, it is enough to prove that each $\bigcup_{\omega \in W_r}(T_\omega W_r)^\perp$ is semialgebraic. To this end consider $T_r=\{(\omega, \eta)\in W_r\times W^*\,|\, (\eta \in T_\omega W_r)^\perp\}$, which is clearly semialgebraic, and the semialgebraic projection $\pi_2:T_r\to W^*$ to the second factor. The image of $\pi_2$ is semialgebraic and coincides with $\bigcup_{\omega \in W_r}(T_\omega W_r)^\perp$.

Now, each $\bigcup_{\omega\in W_r}(T_\omega W_r)^{\perp}$ has dimension less than $\dim (W_r)+l-\dim(W_r)-1\leq l-1$, where the $-1$ comes from the fact that by homogeneity $(T_\frac{\omega}{|\omega|}W_r)^{\perp}=(T_\omega W_r)^\perp$ (notice in particular that the stratum of maximal dimension is open and produces only the zero, since the orthogonal complement of its tangent space is the zero only). 

In particular the dimension of $\Sigma'_1$ is strictly less than $l$ and the generic $p\in \dd$ is a regular value of $q$.\end{proof}

Following up the discussion after Proposition \ref{propo:multiplier}, we see that in the case the final point of $\gamma_u$ is in $\dd$, which we know it is equivalent to $\int_I u=0$, we can apply the Lagrange multiplier rule to the map $q.$ More precisely $u$ is the control associated to a curve which is a geodesic with endpoint $p\in \dd$ if:
$$q(u)=p\quad \textrm{and there exists $\omega$ such that $\omega Q u=u$}.$$
The covector $\omega \in (\dd)^*$ is called the \emph{Lagrange multiplier} associated to $u$. Using this remark we see that $u$ is a geodesics with Lagrange multiplier $\omega$ iff:
$$\langle u, v\rangle_H=\langle \omega Qu, v\rangle_H \quad \textrm{for all $v\in H$}$$
(here the final point is not specified, i.e. we are considering all possible geodesics with Lagrange multiplier $\omega$; the final point is recovered by simply applying the expression given in (\ref{endpointexplicit}) to $u$).
Thus in particular we have that for all $v$ in $H=\{\int v=0\}$:
$$\int_0^{2\pi}\langle u(t), v(t)\rangle dt=-\int_0^{2\pi}\langle \omega AU(t), v(t)\rangle dt=0,$$
where $U(t)=\int_{0}^t u(s) ds.$ The previous condition tells that $u+\omega A U$ is a constant function, or equivalently that $\dot{u}=-\omega A u.$ This implies that $u$ must be of the form:
$$u(t)=e^{-t (\omega A)}u_0,$$
and since $u\in H$, then $u_0$ must be in the \emph{integer} eigenspace of $i \omega A, $ i.e.:
$$u_0=e^{-2\pi \omega A}u_0.$$ 
In particular notice that in this case the complete Lagrange multiplier (i.e. the one arising by using the map $F$, as in Proposition \ref{propo:multiplier}), is $\lambda= (u_0,\omega)$. 

We collect the result for a geodesic ending at $p\in\dd$ in a lemma.
\begin{lemma}\label{endpoint} Let $u$ be the control associated to a geodesic whose final point is in $\dd$ with Lagrange multiplier $\omega$. Then:
$$u(t)=e^{-t \omega A}u_0\quad \textrm{with} \quad u_0=e^{-2\pi \omega A}u_0.$$
\end{lemma}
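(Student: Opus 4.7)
The plan is to apply the Lagrange multiplier rule to $J$ on the submanifold $\Omega_p = q^{-1}(p) \subset H$. By Theorem \ref{regularvalue}, for generic $W$ and $p$ this set is a Hilbert manifold, so critical points of $J|_{\Omega_p}$ are characterized by the existence of a multiplier $\omega \in (\Delta^2)^*$. Using the $L^2$-identification of $dJ_u$ with $u$ together with $dq_{i,u}(v) = 2\langle Q_i u, v\rangle_H$, the Lagrange equation reduces to the Hilbert-space identity $u = \omega Q u$ (after absorbing the factor $2$ into $\omega$).

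Next, I would unwind this identity using the polarized bilinear form computed in Section~2, namely $q_i(u,v) = \tfrac{1}{2}\int_0^{2\pi}\langle U(t), A_i v(t)\rangle\, dt$ where $U(t) = \int_0^t u(s)\,ds$. Summing against $\omega$ and comparing with $\langle u,v\rangle_H$, one reads off the weak integral equation
$$\int_0^{2\pi}\bigl\langle u(t) + (\omega A)\,U(t),\, v(t)\bigr\rangle\, dt = 0 \quad \text{for every } v\in H.$$
Since $H$ consists of \emph{all} mean-zero $L^2$ functions with values in $\mathbb{R}^d$, the bracket's left factor $u(t) + (\omega A)U(t)$ must be constant in $t$.

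Differentiating this constancy relation yields the ODE $\dot u(t) = -(\omega A)\,u(t)$, whose unique solution is $u(t) = e^{-t\omega A} u_0$ with $u_0 = u(0)$. This gives the first half of the statement. For the periodicity condition, I would impose that $u \in H$, i.e.\ $\int_0^{2\pi} u(t)\,dt = 0$. Decomposing $u_0$ along the spectral decomposition of the skew-symmetric operator $\omega A$ (eigenspaces for $\pm i \alpha_j(\omega)$ together with $\ker \omega A$), the mean-zero condition forces any component of $u_0$ in $\ker \omega A$ to vanish, while on each nonzero eigenspace it requires $\int_0^{2\pi} e^{\mp i \alpha_j(\omega) t}\, dt = 0$, hence $\alpha_j(\omega) \in \mathbb{Z}\setminus\{0\}$. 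In every case $u_0$ is fixed by $e^{-2\pi \omega A}$, yielding $u_0 = e^{-2\pi \omega A} u_0$.

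The only conceptual obstacle I anticipate is the passage from the weak identity to pointwise constancy: it relies critically on the fact that mean-zero test functions annihilate exactly the constants, which is precisely where the subspace $H$ (rather than the full $L^2$) and the reduction from the full end-point map $F$ to its Hessian $q$ play their role. The rest is routine ODE and linear algebra on the spectral decomposition of $\omega A$.
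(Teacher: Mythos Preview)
Your proposal is correct and follows essentially the same route as the paper: derive the weak identity $\int_0^{2\pi}\langle u(t)+(\omega A)U(t),v(t)\rangle\,dt=0$ for all $v\in H$, conclude that $u+(\omega A)U$ is constant since $H^\perp$ consists of the constants, differentiate to get $\dot u=-(\omega A)u$, and then use $u\in H$ to force $u_0$ into the integer eigenspace of $i\omega A$. Your spectral-decomposition argument for the last step is spelled out in slightly more detail than the paper's one-line justification, but the logic is identical (just note that the constraint $\alpha_j(\omega)\in\mathbb{Z}$ is only imposed on eigenspaces where $u_0$ has a nonzero component; your final conclusion is unaffected).
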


Motivated by the previous lemma, for every $\omega\in W$ we define:
$$E(\omega)=\{v\in \R^d\,|\, e^{-2\pi \omega A}v=v, \, v\notin \ker(\omega A)\}.$$
Thus $E(\omega)$ is the set of possible initial data for \emph{non constant} geodesics with Lagrange multiplier $\omega$.
In particular we see that in order to have a nonzero initial datum the matrix $i\omega A$ must have nonzero integer eigenvalues, thus the set of all possible Lagrange multipliers coincides with the set:
$$\Lambda=\{\omega \in (\dd)^*\,|\, \det(\omega A-i n \mathbbm{1})=0\quad \textrm{for some $ n\in \mathbb{N}_0$}\}.$$
Notice that $\Lambda$ \emph{is not} an algebraic (or a semialgebraic set): it is indeed given by the infinite union of algebraic sets $\Lambda_n=\{\det(\omega A-i n \mathbbm{1})=0\}$. However $\Lambda$ is locally algebraic: if we intersect it with a ball, then only a finite number of $\Lambda_n$ show up.

We discuss now in more detail the structure of the set:
$$E=\{(\omega, v)\in W\times \R^d\,|\, v\in E(\omega)\}.$$
As for $\Lambda$, this set \emph{is not} semialgebraic, although if we take the ``restriction'' $E|_B$ to a compact semialgebraic set $B$, i.e. we only allow $\omega$ to vary on a compact semialgebraic set $B\subset W$, then $E|_B$ becomes semialgebraic.

First for every $\omega$ let us consider the canonical skew-symmetric form of $\omega A$:
$$M(\omega)^T(\omega A)M(\omega)= \textrm{Diag}(\alpha_1(\omega)J_2, \ldots, \alpha_s(\omega)J_2, 0, \ldots, 0)$$
where $M(\omega)$ is an orthogonal matrix, $\alpha_1(\omega), \ldots, \alpha_s(\omega)$ are the \emph{positive nonzero} eigenvalues of $i\omega A$ and $J_2\in \mathfrak{so}(2)$ is the canonical symplectic matrix. Thus $\R^d$ decomposes as the orthogonal sum $\R^d=V_1\oplus\cdots\oplus V_s\oplus K$, where the $V_i$s are the coordinate two planes and $K$ is the vector space of the last $d-2s$ coordinates. Using this notation we set $V_i(\omega)=M(\omega)V_i$: it is the invariant subspace of $\omega A$ associated to the eigenvalue $\alpha_i(\omega).$ In particular we see that:
$$E(\omega)=\bigoplus_{\alpha_i(\omega)\in \mathbb{N}_0}V_i(\omega).$$

Associating to each $v\in E(\omega)$ the control $e^{t\omega A}v$ defines a linear injection of $E(\omega)$ into $H=\oplus_{k\geq 1} T_k$; in particular the previous curve admits the Fourier series decomposition 
\begin{equation}\label{formula}e^{-t\omega A}v=\sum_{k\geq 1}X_k(v)\coskt - Y_k(v) \sinkt\end{equation}
and we denote by $\phi_k$ the linear map $v\mapsto (X_k(v), Y_k(v))$ (the $k$-th component of the Fourier series of $e^{t\omega A}v$ written in coordinates $T_k\simeq \R^{2d}$).

Let now $v\in V_{i}(\omega)$ with $\alpha_i(\omega)=k\in \mathbb{N}$; in order to get the expression for $\phi_k(v)$ we compute the Taylor series of $e^{\omega At}v$. We have:
\begin{align*}
e^{-t\omega A}v & =\left(\sum_{n\geq0}\frac{(\omega A)^n t^n}{n!}\right)v=\left(\sum_{m\geq 0}\frac{(\omega A)^{2l} t^{2l}}{(2l)!}\right)v-\left(\sum_{m\geq 0}\frac{(\omega A)^{2l+1} t^{2l+1}}{(2l+1)!}\right)v=  \\
 & =\left(\sum_{m\geq 0}\frac{(-1)^l k^{2l} t^{2l}}{(2l)!}\right)v-\left(\sum_{m\geq 0}\frac{(-1)^l k^{2l+1} t^{2l+1}}{(2l+1)!}\right)\frac{\omega A}{k}v= \\
 & =v \cos kt-\frac{\omega A}{k}v \sin kt
\end{align*}
where in the second line we have used the fact that $(\omega A)^2v=-k^2v$ (being $V_i(\omega)$ the space associated to the eigenvalue $\alpha_i(\omega)=k$). This computation implies that:
$$\phi_k(v)=\sqrt{\pi}\begin{pmatrix}
v \\
-\frac{\omega A}{k}v 
\end{pmatrix}.$$

Notice that the same construction can be performed using the linear immersion $v\mapsto e^{t\omega A}v$, which gives the above control with \emph{backward} time; the Lagrange multiplier for the corresponding geodesic is $-\omega$ and the final point is $-q(e^{-t\omega A}v).$

Slightly abusing of notation, we will still denote by $q$ the map obtained by composing the endpoint map with the linear immersion $E(\omega)\hookrightarrow H$.

We recall from the Appendix (see Proposition \ref{stratiappendix}) that the lie algebra $\sod$ of skew-symmetric matrices of size $d$ is stratified by the sets $\Gamma_{k|m_1, \ldots, m_r}$ consisting of those matrices $A$ satisfying: $\dim \ker (A) =k;$ the numbers $m_1, \ldots, m_r$ are natural non-increasing (they are the multiplicities in the positive spectrum of $iA$). Each one of these strata is smooth and has codimension $\sum_{i=1}^r(m_i^2-1)+\frac{k(k-1)}{2}.$ Since (by construction) this stratification is homogeneous, then a generic choice of the Carnot structure $W\subset \sod$ will be transversal to all of the strata and will inherit the stratification (in particular respecting the codimensions and smoothness).

To deal with integer eigenvalues we need to refine this stratification, unfortunately ending up with an infinite number of strata, but still with nice properties. More specifically for every $r\geq 0$ we consider $\vec{n}=(n_1, \ldots, n_r)\in \mathbb{N}^r$ with distinct nonzero components and define the \emph{semialgebraic} set $\Gamma_{k|m_1, \ldots, m_r|\vec{n}}$ as follows: we look at the nonzero components of $\vec{n}$, say $n_{j_1}, \ldots, n_{j_{\nu}}$, and we take those matrices in $\Gamma_{k|m_1, \ldots, m_r}$ such that the eigenvalue with multiplicity $m_{j_1}$ equals $n_{j_1}$, the one with multiplicity $m_{j_2}$ equals $n_{j_2}$, and so on.\\For example $(0,2,2,0)\in \mathbb{N}^4$ is not an admissible $\vec{n}$ (since there are two equal nonzero entries); on the other hand if $\vec{n}=(0, 1,2,0)$ then $\Gamma_{k|m_1, m_2, m_3, m_4|\vec{n}}$ equals the set of all matrices in $\sod$ with multiplicities of the spectrum $ \{m_1, m_2, m_3, m_4\}$ \emph{and} with one eigenvalue equal to $i$ (the imaginary unit) with multiplicity $m_2$ and another equal to $2i$ and with multiplicity $m_3$.\\
This operation of fixing some eigenvalues to some integer numbers increases the codimension by $\nu$ (the number of nonzero components of $\vec{n}$).

Using this notation we see that one can stratify $\Lambda$ as:

$$\Lambda=\coprod_{r} \coprod_{\{k, m_1, \ldots, m_r\}}\,
\coprod_{\{\vec{n}\in \mathbb{N}^r\,\textrm{admissible}\}} \Gamma_{k|m_1, \ldots, m_r|\vec{n}}\cap W.$$
As we already noticed, this stratification is not finite even though each stratum is semialgebraic. Neverthless if we intersect $\Lambda$ with a compact ball $B\subset W$ only a finite number of the above strata appear and we are locally semialgebraic.

The next lemma tells that for the generic choice of $W\subset \sod$ and a generic $p\in \dd$, the Lagrange multipliers have simple integer spectrum.

\begin{lemma}\label{nomultiple} For a generic Carnot group structure $W\subset \sod$, the generic $p\in \dd$ is not the final point of a geodesic with lagrange multiplier $\omega$ such that $\omega A$ has multiple eigenvalues in $i\mathbb{Z}$.
\end{lemma}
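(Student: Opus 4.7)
The plan is to deduce the lemma from a dimension count on the endpoint map restricted to each stratum $\Gamma_{k|m_1,\ldots,m_r|\vec{n}}\cap W$. The essential extra ingredient beyond a na\"ive count is a toral symmetry that appears precisely when some eigenspace of $\omega A$ has complex dimension at least two.

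I first use transversality: for generic $W\subset\sod$, each non-empty stratum $S = \Gamma_{k|m_1,\ldots,m_r|\vec{n}}\cap W$ is a smooth semialgebraic submanifold of $W$ of codimension $\sum_i (m_i^{2}-1) + k(k-1)/2 + \nu$, with $\nu$ the number of nonzero entries of $\vec{n}$. Call the stratum \emph{bad} if some integer-position multiplicity $m_{j_i}\geq 2$. By Lemma \ref{endpoint} (combined with the mean zero condition that forces the kernel component to vanish), each geodesic with Lagrange multiplier $\omega\in S$ ending in $\dd$ has the form $u(t)=e^{-t\omega A}v$ with $v\in\bigoplus_{j:\,n_j\neq 0}V_j(\omega)$, a real subspace of dimension $2\sum_{j:\,n_j\neq 0}m_j$.

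Next I compute the endpoint and identify the symmetry. Using (\ref{explicitqi}) and the orthogonality of Fourier modes at distinct integer frequencies, the cross-eigenspace contributions vanish and the contribution of $v\in V_{j_i}$ is
\[
q_k(v)\;=\;-\frac{\pi}{2n_{j_i}^{2}}\,\langle v,\,\{A_k,\omega A\}|_{V_{j_i}}v\rangle,\qquad \{A,B\}\doteq AB+BA,
\]
which is symmetric because $A_k$ and $\omega A$ are skew. The key observation is that on $V_{j_i}$ the symmetric matrix $\{A_k,\omega A\}|_{V_{j_i}}$ commutes with the intrinsic complex structure $J_{j_i}=(\omega A)/n_{j_i}$, as the identity $[J,\{A,J\}]=0$ is immediate from $J^{2}=-\mathbbm{1}$. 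Hence $q_k|_{V_{j_i}}$ is invariant under $v\mapsto e^{\theta J_{j_i}}v$, and taking the product over the $\nu$ integer eigenspaces produces a $(S^{1})^{\nu}$ action on the space of initial data that preserves the endpoint and is free on its generic part (note that the time reparametrization $S^{1}$ is only the diagonal subgroup, so this is a genuine $\nu$-fold enlargement).

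Passing to the quotient, the image of the endpoint map from $S$ has dimension at most $l - \sum_i(m_i^{2}-1) - k(k-1)/2 - \nu + 2\sum_{j:\,n_j\neq 0}m_j - \nu$. Strict inequality with $l$ is therefore equivalent to
\[
2\sum_{j:\,n_j\neq 0}(m_j-1) \;<\; \sum_{i=1}^{r}(m_i^{2}-1) + \frac{k(k-1)}{2}.
\]
The elementary bound $m^{2}-1\geq 2(m-1)$, \emph{strict} for $m\geq 2$, together with the bad-stratum hypothesis $m_{j_1}\geq 2$, yields
\[
\sum_{i=1}^{r}(m_i^{2}-1) \;\geq\; \sum_{j:\,n_j\neq 0}(m_j^{2}-1) \;>\; 2\sum_{j:\,n_j\neq 0}(m_j-1),
\]
so the image of the endpoint map from each bad stratum has dimension strictly less than $l$ in $\dd$. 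There are countably many bad strata and only finitely many of codimension at most $l$; each contributes a semialgebraic subset of positive codimension in $\dd\cong\R^{l}$, and a countable union of such sets has Lebesgue measure zero, proving the lemma. The main technical point is the $(S^{1})^{\nu}$ symmetry: without it the na\"ive dimension count is off by exactly $\nu$ and can fail in small corank cases (e.g.~$d=6$, $l=5$, multiplicities $(2,1)$), so the whole argument rests on the commutation identity $[J,\{A_k,J\}]=0$.
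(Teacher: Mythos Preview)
Your proof is correct and follows the same overall strategy as the paper: a stratum-by-stratum dimension count on the bundle $E|_S\to\dd$, with a rotational symmetry on the integer eigenspaces cutting down the effective fiber dimension. The main difference is in how the symmetry is realized. You make it algebraically explicit via the anticommutator formula $q_k|_{V_j}(v)\propto\langle v,\{A_k,\omega A\}v\rangle$ and the identity $[J,\{A_k,J\}]=0$, obtaining an $(S^1)^\nu$ action and hence image dimension at most $l-1$. The paper argues more geometrically (``$\mu_j$ unit circles on distinct orthogonal planes map to a point'') and claims a larger kernel, namely $\sum_{j\in I}\mu_j$ rather than your $\nu$, arriving at the sharper bound $\textrm{rk}(df)\leq l-2$; your weaker bound is already enough for the lemma, and your derivation of the symmetry is cleaner and fully self-contained. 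One small inaccuracy: there are \emph{not} only finitely many bad strata of codimension at most $l$ (as $\vec n$ varies there are infinitely many with the same codimension), but this is irrelevant since countability is all the measure-zero argument needs.
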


\begin{proof}
First we pick the structure $W$ to be transversal to all strata of the first one of the above stratifications, the one using only the multiplicities in the spectrum (and we know such a property is generic). We stratify now the set $\Lambda$ by intersecting it with the different strata $\Gamma_{k|m_1, \ldots, m_r}$; we are interested only in those strata for which there is at least a multiple integer eigenvalues and we refine the stratification to the above infinite one, by indexing with the admissible $\vec{n}\in \mathbb{N}^r$.

Thus we let $\Lambda_{\{m_j\geq 2\}}$ be one stratum $\Gamma_{k|m_1, \ldots, m_r|\vec{n}}$ such that $m_j\geq 2$ for at least one index $j$ with $n_j\neq 0.$ Each $\Lambda_{\{m_j\geq 2\}}$ obtained in this way has codimension:
$$\textrm{codim}_{W}\Lambda_{\{m_j\geq 2\}}=\sum_{i=1}^r(m_i^2-1)+\frac{k(k-1)}{2}+\nu.$$

We consider now as above the set $E=\{(\omega, v) \in \Lambda\times \R^d\,|\, e^{2\pi \omega A}v=v, \, v\notin \ker(\omega A)\}$. Over each stratum $\Lambda_{\{m_j\geq 2\}}$ the set $E|_{\Lambda_{\{m_j\geq 2\}}}$ is a smooth vector bundle (it is the restriction to $\Lambda_{\{m_j\geq 2\}}$, which is smooth, of a smooth vector bundle); moreover $E|_{\Lambda_{\{m_j\geq 2\}}}$ is semialgebraic as well (here the vector $\vec{n}$ is fixed). 

Consider the smooth map:
$$f:E|_{\Lambda_{\{m_j\geq 2\}}}\to \dd$$
defined by $(\omega, v)\mapsto q(v)$, where $q(v)$ is the final point of the geodesic associated to the control $v(t)=e^{t\omega A}v.$ We compute the rank of the differential of $f$ and show that the assumption $m_j\geq 2$ implies this rank is less than $l-1$; in particular the image of $f$ has measure zero. Since the set of final points of geodesics with Lagrange multipliers with multiple eigenvalues in $i\mathbb{Z}$ is the countable union of the images of the different $f$ obtained as $\vec{n}$ varies over $\mathbb{N}^r$, the result follows.

The differential of $f$ restricted to the base $\Lambda_{\{m_j\geq 2\}}$ has rank smaller than the dimension of $\Lambda_{\{m_j\geq 2\}}$, which is $l-\sum_{i=1}^r(m_i^2-1)-\frac{k(k-1)}{2}-\nu$.
For the rank of $f$ restricted to the fibers we argue as follows. For every $\omega\in \Lambda_{\{m_j\geq 2\}}$ we consider the invariant subspaces of $\omega A$; for each natural nonzero eigenvalue $\lambda_j(\omega)$ of $i\omega A$ we find an invariant space $V_j(\omega)$ (the real part of the $\lambda_j(\omega)$-eigenspace of $i \omega A$) of dimension $2\mu_j$, twice the multiplicity of $\lambda_j(\omega)$; let's call $I\subset\{1, \ldots, r\}$ the index set for such spaces $V_j(\omega)$ (notice that $I=\{j_1, \ldots, j_{\nu}\}$).

The restriction of $f$ to each such $V_j(\omega)$ maps $\mu_j$ unit circles (lying on distinct orthogonal planes) to a point, in particular the dimension of the kernel of the differential of $f$ on each $V_{j}(\omega)$ is at least $\mu_j$. Since the dimension of $E(\omega)$ is  $2\sum_{j\in I}\mu_j$, we see that the rank of the differential of $f$ on the fibers is at most $\sum_{j\in I}\mu_j$.\\
In particular we can bound the rank of the differential of $f$ as:
\begin{align*}\textrm{rk} (df)&\leq l-\nu-\sum_{j=1}^r(m_j^2-1)-\frac{k(k-1)}{2}+\sum_{j\in I}\mu_j\\
&\leq l-\nu-\sum_{j\in I}(m_j^2-1-m_j)-\frac{k(k-1)}{2}\\
&\leq l-\nu-\sum_{j\in I, \, m_j\geq 2}(m_j^2-1-m_j)-\sum_{j\in I,\, m_j=1}(m_j^2-1-m_j)\\
&\leq l-\nu-\sum_{j\in I, \, m_j\geq 2}1-\sum_{j\in I,\, m_j=1}(-1)\\
&\leq l-\nu-1+(\nu-1)<l-1.\end{align*}

\end{proof}

We define now the set $\Sigma_2\subset \dd$ to be the union of the various $f(E|_{\Lambda_{k|m_1, \ldots, m_r, \vec{n}}})$ where $\vec{n}=(n_1, \ldots n_r)\in \mathbb{N}^r$ is admissible and $m_j\geq 2$ for at least one index $j$ with $n_j\neq 0$. The above lemma says that $\Sigma_2$ is the countable union of semialgebraic sets of codimension at least $2$ (in particular, for example, $\Sigma_2$ has measure zero).

We study now what happens for a $p\in \dd\backslash (\Sigma_1\cup \Sigma_2)$ (because of the above argument such $p$ is generic). Such a $p$ has Lagrange multipliers $\omega$ with simple spectrum, i.e. $\omega A$ belongs to a stratum $\Lambda_{k|m_1, \ldots, m_r| \vec{n}}$ with all multiplicites equal to $1$; for simplcity of notation we omit the string of multiplicites and denote such stratum simply by $\Lambda_{\vec{n}}$. In other words $\Lambda_{\vec{n}}$ is one of the above strata where all eigenvalues are distinct and we have fixed $\nu$ of them to be equal to $in_{j_1}, \ldots, in_{j_\nu}$ (the nonzero entries of $\vec{n}$).

\begin{lemma}\label{distinct}
Let $\omega\in \Lambda_{\vec{n}}$ and $n_1, \ldots, n_\nu$ be the nonzero eigenvalues of $i\omega A$ in $\mathbb{N}$; for $j=1, \ldots, \nu$ let also $V_{j}(\omega)$ be the (two dimensional) invariant subspace of $\omega A$ associated to $n_j$. Then $E(\omega)$ splits as the direct orthogonal sum:
$$E(\omega)=\bigoplus_{j=1}^{\nu}V_{j}(\omega).$$
Moreover the image of $q|_{V_{j}(\omega)}$ is a half line $l_j^+(\omega)$ and:
$$\emph{\textrm{im}}(q|_{E(\omega)})=\emph{\textrm{cone}}\{l_1^+(\omega), \ldots, l_{\nu}^+(\omega)\}.$$
\end{lemma}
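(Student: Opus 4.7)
The plan is to reduce the computation of $q|_{E(\omega)}$ to a separate computation on each $2$-plane $V_j(\omega)$ and then use the frequency decomposition $H=\bigoplus_{k}T_k$ to show the pieces add up. Because $\omega A\in\sod$, the space $\R^d$ splits orthogonally as $\ker(\omega A)\oplus\bigoplus_iV_i(\omega)$, where $i\omega A$ has eigenvalues $\pm\alpha_i(\omega)$ on each $V_i(\omega)$; the constraint $e^{-2\pi\omega A}v=v$ kills every component with $\alpha_i(\omega)\notin\mathbb{N}$, while $v\notin\ker(\omega A)$ excludes the kernel, so $E(\omega)=\bigoplus_{\alpha_i(\omega)\in\mathbb{N}}V_i(\omega)$. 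The simple-spectrum hypothesis $\omega\in\Lambda_{\vec n}$ then forces these integer eigenvalues to be precisely $n_1,\ldots,n_\nu$, each with a $2$-dimensional eigenplane, proving the direct-sum claim.

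Next I would compute $q$ on a single summand. For $v_0\in V_j(\omega)$ the control $u(t)=e^{-t\omega A}v_0$ lies entirely in the single frequency subspace $T_{n_j}$, with Fourier coordinates $(X,Y)=(\sqrt\pi\,v_0,\,-\sqrt\pi\,\tfrac{\omega A}{n_j}v_0)$, as shown by equation (\ref{formula}) and the Taylor-series calculation following it. Applying Proposition~\ref{operatorQstructure}(c) with $\omega Q$ replaced by each $Q_h$ yields
\[q_h(u)=\tfrac{1}{n_j}\langle X,A_hY\rangle=-\tfrac{\pi}{n_j^{2}}\langle v_0,A_h(\omega A)v_0\rangle.\]
Since $(\omega A)v_0$ already lies in $V_j(\omega)$, this pairing depends only on the orthogonal projection $B_h:=\pi_{V_j(\omega)}A_h|_{V_j(\omega)}$, which is skew-symmetric on a $2$-dimensional space and hence equals $c_h^{(j)}J_2$ in an orthonormal basis adapted to the canonical form. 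Using $(\omega A)|_{V_j(\omega)}=n_jJ_2$ and $J_2^{2}=-I$ one gets $\pi_{V_j(\omega)}A_h(\omega A)v_0=-n_jc_h^{(j)}v_0$, hence
\[q_h(u)=\frac{\pi\,c_h^{(j)}}{n_j}\|v_0\|^{2}.\]
Therefore $q|_{V_j(\omega)}$ is radial and $q(V_j(\omega))$ equals the half-line $l_j^+(\omega)=\mathbb R_{\ge 0}\cdot(c_1^{(j)},\ldots,c_l^{(j)})\subset\R^l$.

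Finally, for $v_0=v_1+\cdots+v_\nu$ with $v_j\in V_j(\omega)$, the corresponding control splits as $u=\sum u^{(j)}$ with $u^{(j)}\in T_{n_j}$. Since the $n_j$ are pairwise distinct and each $Q_h$ preserves every $T_k$, all cross terms in $\langle u,Q_hu\rangle_H$ vanish by orthogonality of distinct $T_k$, so $q(v_0)=\sum_j q(v_j)$. Letting the norms $\|v_j\|\ge 0$ vary independently identifies $\mathrm{im}(q|_{E(\omega)})$ with $\mathrm{cone}\{l_1^+(\omega),\ldots,l_\nu^+(\omega)\}$, as claimed. The only nontrivial step is the scalar identity in the middle paragraph: $A_h$ need not preserve $V_j(\omega)$, but since we pair it against a vector that already lies there only the induced skew-form on $V_j(\omega)$ contributes, and on a $2$-plane that is automatically a multiple of $J_2$, which is exactly what makes the map radial.
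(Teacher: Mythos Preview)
Your proof is correct, and the decomposition and additivity arguments match the paper's. The difference lies in how the half-line claim is established. The paper argues geometrically: it uses the smooth bundle $\coprod_{\omega\in\Lambda_{n_j}}V_j(\omega)$, takes a curve $\omega(t)\in\Lambda_{n_j}$ with a section $v(t)$, differentiates $\omega(t)Pv(t)=v(t)$ and pairs with $v$ to obtain $\eta(q(v))=0$ for every $\eta\in T_\omega\Lambda_{n_j}$; this identifies $l_j^+(\omega)$ as the \emph{normal direction} to the hypersurface $\Lambda_{n_j}$ at $\omega$, and positivity comes from $\omega(q(u))=J(u)>0$. You instead compute $q_h$ explicitly: projecting $A_h$ onto the $2$-plane $V_j(\omega)$ yields a multiple $c_h^{(j)}J_2$ of the symplectic form, which forces $q_h(u)=\tfrac{\pi c_h^{(j)}}{n_j}\|v_0\|^2$ and hence radiality.

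Your route is more elementary (no differentiable structure on $\Lambda_{n_j}$ needed) and gives an explicit direction vector $(c_1^{(j)},\ldots,c_l^{(j)})$. One point you leave implicit but which your formula immediately yields: summing $\omega_h c_h^{(j)}$ recovers $n_j>0$ (since $\sum_h\omega_h B_h=(\omega A)|_{V_j(\omega)}=n_jJ_2$), so the vector is nonzero and the image is a genuine half-line, not the origin. The paper's approach, on the other hand, buys the identification of $l_j^+(\omega)$ with the normal to $\Lambda_{n_j}$, which is used later (Theorem~\ref{structure}, Lemma~\ref{durazzo}); with your approach that identification would need a separate one-line check.
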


\begin{proof}
Recall that the space $E(\omega)$ is defined to be $\{v\in \R^d\,|\, e^{-2\pi \omega A}v=v, \, v\notin \ker (\omega A)\}$; the map that associates to a vector $v\in E(\omega)$ the curve $e^{-t \omega A}v$ defines an embedding of $E(\omega)$ into $H$ and if $v\in V_{j}(\omega)$ then the resulting control must be a linear combination of $\sin (n_j t)$ and $\cos( n_j t)$; in particular $V_{n_j}(\omega)\subset T_{n_j}$. Since the $T_k$ are pairwise orthogonal for each operator $Q_1, \ldots, Q_l,$ then decomposing $v\in E(\omega)$ into its pieces $v=v_{1}+\cdots+ v_\nu$ with $v_j\in V_{j}(\omega)$, we get:
$$q(v)=q(v_1)+\cdots+q(v_\nu),$$
which proves the image of $q|_{E(\omega)}$ is the cone spanned by $\{q(V_{1}(\omega)), \ldots, q(V_{\nu}(\omega))\};$ the orthogonality of the $V_{j}$ follows from the one of the $T_{n_j}.$

It remains to prove that the image of $q|_{V_{j}(\omega)}$ is a half line. By assumption $\omega$ belongs to a smooth stratum of codimension $\nu$ in $W$ andrecalling the definition of  $\Lambda_{n_j}= \{\det(\omega A-in_j\mathbbm{1})=0\}$, we have that:
$$\Lambda_{\vec{n}}=\bigcap_{j=1}^r\Lambda_{n_j}\cap \Gamma_{k|1, \ldots, 1}.$$
The bundle $\coprod_{\omega\in \Lambda_{n_j}}V_{j}(\omega)$ is smooth (being the restriction of a smooth bundle). In particular for every $\eta\in T_\omega\Lambda_{n_j}$ there are curves $\omega(t)\in \Lambda_{n_j}$ and $v(t)\in V_{j}(\omega(t))$ such that $\omega(0)=\omega$, $\dot{\omega}(0)=\eta$ and $v(0)=v$. Deriving the equation $\omega(t)\omega Pv(t)=v(t)$ and taking inner product with $v$ we get:
$$0=\left\langle v,\frac{\dot{\omega}(0)\omega P}{k}v\right\rangle=\eta(q(v))$$
which tells the final point of the geodesic $u$ associated to $e^{\omega A t}v$ is orthogonal to $T_\omega \Lambda_{n_j}$, hence is it contained in a line. On the other hand since $\omega$ is the Lagrange multiplier for the geodesic $u$, we get $\omega(q(u))=J(u)>0,$ which concludes the proof.
\end{proof}

Everything now is ready for the proof of the Theorem that describes the structure of geodesics.

\begin{teo}\label{structure}For a generic Carnot group structure $W\subset \sod$ and a generic $p\in \dd$ the set $\Lambda(p)$ of Lagrange multipliers of geodesic whose final point is $p$ is discrete. Moreover every $\eta\in \Lambda(p)$ belongs to some $\Lambda_{\vec{n}}$ and the set of all geodesics whose endpoint is $p$ with Lagrange multiplier $\eta$ is a compact manifold of dimension $\nu\leq l$ ($\nu$ is the number of nonzero entries of $\vec{n}$) diffeomorphic to the torus $ \underbrace{S^1 \times \cdots \times S^1}_{\nu \textrm{ times}}$.

\end{teo}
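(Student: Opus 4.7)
The strategy is to combine Theorem \ref{regularvalue} with Lemmas \ref{nomultiple} and \ref{distinct}, removing from $\dd$ a negligible ``bad set'' outside of which all three conclusions hold simultaneously. Choose $p\in\dd\setminus(\Sigma_1\cup\Sigma_2\cup\Sigma_3)$, where $\Sigma_1,\Sigma_2$ are the sets furnished by Theorem \ref{regularvalue} and Lemma \ref{nomultiple}, and $\Sigma_3$ is a further countable union of measure-zero semialgebraic sets produced below. The condition $p\notin\Sigma_2$ already yields the second assertion: every Lagrange multiplier $\eta\in\Lambda(p)$ has simple integer spectrum, so $\eta\in\Lambda_{\vec n}$ for some admissible $\vec n\in\mathbb{N}^{r}$ with $\nu$ nonzero entries.

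For discreteness I argue one stratum at a time. For each admissible $\vec n$, the stratum $\Lambda_{\vec n}$ is semialgebraic of dimension $l-\nu$, and by Lemma \ref{distinct} carries a smooth splitting $E(\omega)=V_1(\omega)\oplus\cdots\oplus V_\nu(\omega)$ and half-lines $l_j^+(\omega)=q(V_j(\omega))$. After a local smooth choice of unit generators $\hat l_j(\omega)\in l_j^+(\omega)$, consider
$$
\Psi_{\vec n}:\Lambda_{\vec n}\times\R_{>0}^{\nu}\longrightarrow \dd,\qquad (\omega,c_1,\ldots,c_\nu)\longmapsto \sum_{j=1}^{\nu}c_j\,\hat l_j(\omega).
$$
Source and target both have dimension $l$, so Sard's theorem gives that the critical values of $\Psi_{\vec n}$ form a negligible set; taking the union over the countable family of admissible $\vec n$ produces $\Sigma_3$. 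Away from $\Sigma_3$ each $\Psi_{\vec n}^{-1}(p)$ is discrete and $\Psi_{\vec n}$ is a local diffeomorphism there, which in particular forces the generators $\hat l_1(\omega),\ldots,\hat l_\nu(\omega)$ to be linearly independent at every preimage. To promote this to discreteness of $\Lambda(p)=\bigcup_{\vec n}\Psi_{\vec n}^{-1}(p)$, observe that $\omega\in\Lambda_n$ forces $n\leq \|\omega A\|_{\mathrm{op}}$, so in any bounded ball of $W$ only finitely many $\vec n$ contribute, and the union is locally finite.

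Finally, fix $\omega\in\Lambda_{\vec n}\cap\Lambda(p)$ and analyze the fiber. A geodesic with Lagrange multiplier $\omega$ ending at $p$ corresponds via Lemma \ref{endpoint} to an initial velocity $v=v_1+\cdots+v_\nu$ with $v_j\in V_j(\omega)\simeq\R^{2}$, subject to $q(v)=p$. Lemma \ref{distinct} gives $q(v)=\sum_j q(v_j)$ with $q(v_j)\in l_j^+(\omega)$, and the linear independence of the $\hat l_j(\omega)$ already established makes the decomposition $p=\sum_j c_j\hat l_j(\omega)$ unique with $c_j>0$ determined by $p$. Since $q|_{V_j(\omega)}$ is a quadratic form on the plane $V_j(\omega)$ whose image is contained in the single half-line $l_j^+(\omega)$, it must be proportional to $\|\cdot\|^{2}$; hence $\|v_j\|^{2}$ is determined and $v_j$ traces out a circle in $V_j(\omega)$. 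Taking the product yields the required torus $S^1\times\cdots\times S^1$.

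The main obstacle is the genericity bookkeeping: one applies Sard across the countable collection of semialgebraic strata $\Lambda_{\vec n}$, verifies that the unit generators $\hat l_j$ can be chosen coherently on local trivializations (they are only defined up to sign globally), and checks that local finiteness in bounded balls upgrades the discreteness of each $\Psi_{\vec n}^{-1}(p)$ to discreteness of their union. A secondary subtlety is that the half-lines $l_j^+(\omega)$ may collapse (become linearly dependent) on a subvariety of $\Lambda_{\vec n}$; generically this locus consists precisely of critical values of $\Psi_{\vec n}$ and is therefore absorbed into the Sard step.
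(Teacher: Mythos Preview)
Your argument is correct and follows essentially the same route as the paper's: remove $\Sigma_1\cup\Sigma_2$, apply Sard to an $l$-dimensional source over each stratum $\Lambda_{\vec n}$ (the paper uses the incidence set $F_{\vec n}=\{(\omega,p'):p'\in q(E(\omega))\}$ together with its projection to $\dd$, in place of your parametrization $\Psi_{\vec n}$), and then invokes local finiteness of the strata in bounded balls of $W$. For the torus conclusion the paper reaches the circle condition on each $V_j(\omega)$ via the identity $\omega q(u_j)=J(u_j)$, which is exactly your observation that $q|_{V_j(\omega)}$ is a positive multiple of $\|\cdot\|^2$.
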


\begin{proof}
By Lemma \ref{nomultiple} we know that for the generic choice of $W\subset \sod$ the generic $p\in \dd$ is a final point only of geodesics with Lagrange multipliers in $\Lambda_{\vec{n}}$ for some $\vec{n}\in \mathbb{N}^{\lfloor d/2\rfloor}$.\\
Moreover for every $\eta$ in $\Lambda(p)$ the set of all geodesics with Lagrange multipliers $\eta$ and final point $p$ is the preimage of $p$ under the map $q:E|_{\Lambda(p)}\to \dd.$

For every admissible $\vec{n}\in \mathbb{N}^{\lfloor d/2\rfloor}$ we consider the semialgebraic set $F_{\vec{n}}$ defined by:
$$F_{\vec{n}}=\{(\omega, p)\in \Lambda_{\vec{n}}\times \dd\,|\, p\in \textrm{im}(q|_{E(\omega)})\}$$
together with the semialgebraic map $g:F_{\vec{n}}\to \dd$ defined by $(\omega, p)\mapsto p$. Since each point $(\omega, p)$ in $F_{\vec{n}}$ has $\omega$ in $\Lambda_{\vec{n}}$, then by Lemma \ref{distinct} the dimension of $F_{\vec{n}}$ is at most $l$ . In fact $\omega$ varies on a set of dimension $l-\nu$ and the image of $q|_{E(\omega)}$ is a cone of dimension at most $\nu.$  Since $F_{\vec{n}}$ is semialgebraic we stratify it as $F_{\vec{n}}=\coprod_{j=1}^s F_{{\vec{n}}, j}$, where each stratum is smooth semialgebraic of dimension at most $l$ (in fact here the index $s$ depends on $\vec{n}$ as well, but we omit this dependence to simplify notations). Notice that if $(\omega, p)$ belongs to a stratum of maximal dimension $l$, then the cone $q(E(\omega))$ must have maximal dimension $\nu$ and $p$ must be in its interior.

The restriction $g_{{\vec{n}}, j}=g|_{F_{{\vec{n}}, j}}$ is smooth semialgebraic, thus by the semialgebraic Sard's lemma (see \cite{BCR}) the set $C_{{\vec{n}}, j}$ of its critical values is a semialgebraic set of dimension at most $l-1$. If $p$ is not one of these critical values then $g_{\vec{n}, j}^{-1}(p)$ consists of isolated points if $\dim(F_{\vec{n}, j})=l,$ and is empty otherwise.

We set $\Sigma_3$ to be the union of the critical values of $g_{\vec{n}, j}$ ($\vec{n}$ varies over $\mathbb{N}^{\lfloor d/2\rfloor}$ and $j$ is the stratifying index for $F_{\vec{n}}$ as above); such a union, being a countable union of semialgebraic set of dimension at most $l-1$ has measure zero, hence points belonging to its complement are generic.

On the other hand $\Lambda(p)$ equals the union of the projections on $\Lambda$ of the various $g_{\vec{n}, j}^{-1}(p)$. If we intersect $\Lambda$ with a compact ball $B$, we hit only a finite number of strata $\Lambda_{\vec{n}}$ and $\Lambda(p)\cap B$ is dsicrete; thus for a generic $p$ the set $\Lambda(p)$ is discrete set (possibly infinite). 

From Lemma \ref{distinct} we recall that $q(E(\omega))$ is the cone spanned by the half-lines $l^{+}_{j}(\omega)=q(E_j(\omega))$: moreover $p$ is the sum of nonzero vectors belonging to these half-lines, $p=p_1+\ldots+p_\nu$ with $p_j \in l^{+}_{j}(\omega)$. Since the spaces $E_j(\omega)$ are orthogonal with respect to the operators $J,Q_1,\ldots,Q_l$, the condition $q(u)=p$ with $u \in E(\omega)$ can be split up as $q(u_j)=p_j$ with $u_j \in E_j(\omega)$. The condition $q(u_j)=p_j$ is equivalent to $\omega q(u_j)=\omega(p_j)$ since for every covector $\eta$ orthogonal to $p_j$ the condition $\eta q(u_j)=\eta(p_j)$ is automatically satisfied; on the other hand by the Lagrange Multiplier condition we have that $\omega q(u_j)=J(u_j)$, so that the condition is a positive definite one, $J(u_j)=\omega (p_j)$. This implies that every component $u_j \in E_j(\omega)$ of the geodesic $u$ going to $p$ is constrained on a circle $S^1 \in E_j(\omega)$, from which follows that the critical manifold $C_\omega$ is a $\nu$-dimensional torus. 
\end{proof}

Up to now the genericity assumptions for $p$ come from Theorem \ref{regularvalue}, Lemma \ref{nomultiple} and Theorem \ref{structure}: specifically we require $p\notin \Sigma_1\cup \Sigma_2\cup \Sigma_3$ (where $\Sigma_3$ is defined in the proof of Theorem \ref{structure}).


\subsection{Morse-Bott theory}
Since critical points of the Energy functional $J$ are far from being isolated (they arrange themselves into compact manifolds) we cannot apply Morse Theory in its standard version. What we need is a generalization of it called \emph{Morse-Bott Theory}: it allows to prove the same results as for the ordinary theory if in the definitions nondegenerate critical points are replaced by \emph{nondegenerate critical manifolds} (see \cite{Bott1, Klingenberg}; the basic definitions and results are recalled in the Appendix \ref{app:morsebott}). 

\begin{teo} \label{morsebottpalaissmale}
For a generic Carnot Group structure $W \subset \sod$ and a generic point $p \in \dd$, the Energy functional $J$ restricted to $\Omega_{p}$ is a Morse-Bott function. 
\end{teo}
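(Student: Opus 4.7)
The plan is to combine the structural results already available with a Lagrange-multiplier computation of the second variation. By Theorem \ref{regularvalue} the space $\Omega_p$ is a Hilbert manifold, and by Theorem \ref{structure} the critical set of $J|_{\Omega_p}$ decomposes as a disjoint union $\bigsqcup_{\omega\in\Lambda(p)}C_\omega$ of smooth tori $C_\omega\simeq (S^1)^\nu$. What remains is to verify that at every $u\in C_\omega$ the Hessian of $J|_{\Omega_p}$ is nondegenerate on the normal bundle to $C_\omega$; this is exactly the Morse--Bott condition.

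Fix $u\in C_\omega$. Since the critical-point equation (up to an innocuous rescaling of the multiplier) is $u=\omega Qu$, a direct Lagrange-multiplier computation shows that the second variation of $J|_{\Omega_p}$ at $u$, evaluated on $T_u\Omega_p=\ker d_uq$, is the symmetric bilinear form associated to the bounded self-adjoint operator
\[
\mathrm{Id}-\omega Q\colon H\longrightarrow H,
\]
restricted to $T_u\Omega_p$. By the Remark following Proposition \ref{operatorQstructure}, $\omega Q$ is compact on $H$, so $\mathrm{Id}-\omega Q$ is a compact perturbation of the identity; it remains Fredholm of index zero after restriction to the closed, finite-codimension subspace $T_u\Omega_p$. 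This takes care of the functional-analytic side of nondegeneracy: its kernel is automatically finite-dimensional and equals the kernel of the induced bilinear form.

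The decisive step is to identify this kernel. By Propositions \ref{operatorQstructure} and \ref{eigens}, the $+1$-eigenspace of $\omega Q$ in $H$ is the direct sum, over the positive integer eigenvalues $n_1,\dots,n_\nu$ of $i\omega A$ (each simple, by Lemma \ref{nomultiple}), of the two-dimensional contributions from the Fourier layers $T_{n_j}$. Using the explicit Fourier expansion \eqref{formula} of $e^{-t\omega A}v$, one checks that this $+1$-eigenspace coincides exactly with the image of the linear injection $E(\omega)\hookrightarrow H$ sending $v\mapsto e^{-t\omega A}v$, so it has real dimension $2\nu$. Intersecting with $T_u\Omega_p=\ker d_uq$ and applying Lemma \ref{distinct}, which describes $q|_{E(\omega)}$ as the map parameterizing $C_\omega$ as $q^{-1}(p)\cap E(\omega)$, yields
\[
\ker(\mathrm{Id}-\omega Q)\cap T_u\Omega_p \;=\; \ker d_u\bigl(q|_{E(\omega)}\bigr) \;=\; T_uC_\omega,
\]
a subspace of dimension $\nu$. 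Here one uses the genericity of $p$ (specifically $p\notin\Sigma_1\cup\Sigma_2\cup\Sigma_3$) to ensure that $p$ lies in the interior of the cone spanned by the half-lines $l_1^+(\omega),\dots,l_\nu^+(\omega)$, so that $q|_{E(\omega)}$ has full rank $\nu$ at $u$.

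The main obstacle I expect is this last identification: one must carefully match the abstract $+1$-eigenspace of the compact operator $\omega Q$ with the geometric embedding $E(\omega)\hookrightarrow H$, and then check that the Lagrange-multiplier constraint $d_uq=0$ cuts it down exactly to the tangent space of the torus $C_\omega$. Once this is in place, combining nondegeneracy of $\mathrm{Id}-\omega Q$ on a complement of $T_uC_\omega$ in $T_u\Omega_p$ with the Fredholm property gives precisely the definition of a Morse--Bott critical manifold, so $J|_{\Omega_p}$ is Morse--Bott.
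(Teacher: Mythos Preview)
Your identification of the kernel of the Hessian is incorrect, and this is the heart of the matter. The Hessian is the bilinear form $B(v,w)=\langle (\mathrm{Id}-\omega Q)v,w\rangle$ \emph{restricted} to $T_u\Omega_p$; its kernel consists of those $v\in T_u\Omega_p$ for which $(\mathrm{Id}-\omega Q)v$ is orthogonal to $T_u\Omega_p$, i.e.\ $(\mathrm{Id}-\omega Q)v\in\mathrm{span}\{Q_1u,\dots,Q_lu\}$. Thus the degeneracy equations are
\[
v-\omega Qv=\lambda Qu\quad\text{for some }\lambda\in(\dd)^*,\qquad \langle v,Q_iu\rangle=0\ (i=1,\dots,l),
\]
and your displayed equality $\ker(\mathrm{Id}-\omega Q)\cap T_u\Omega_p=T_uC_\omega$ only treats the case $\lambda=0$. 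That inclusion $T_uC_\omega\subset\ker\he_u$ is automatic for any critical manifold; the content of the Morse--Bott condition is precisely to rule out solutions with $\lambda\neq 0$. The paper does this by observing that the full degeneracy system is exactly the condition that $(v,\lambda)$ lies in the kernel of the differential of the map $f:E_{\vec n}\to\dd$, $(\eta,w)\mapsto q(w)$, where now $\eta$ is allowed to move along the stratum $\Lambda_{\vec n}$ containing $\omega$ (the equation $\dot u-\eta Q\dot u=\dot\eta Qu$ is obtained by differentiating $\eta(t)Qu(t)=u(t)$). One then imposes a \emph{new} genericity condition---$p$ must be a regular value of each such $f$---which defines a further bad set $\Sigma_4$ beyond the $\Sigma_1\cup\Sigma_2\cup\Sigma_3$ you invoke. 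Only after excluding $\Sigma_4$ does a dimension count ($\dim E_{\vec n}=l+\nu$, rank $df=l$) force $\ker d_uf$ to have dimension $\nu$ and hence to coincide with $T_uC_\omega$.

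A second omission: the paper's definition of Morse--Bott (Appendix~\ref{app:morsebott}) includes the Palais--Smale condition as part~(c), and the paper devotes a full argument to verifying it for $J|_{\Omega_p}$, using compactness of the operators $Q_i$ to extract convergent subsequences. You do not address this at all.
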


\begin{proof}
We assume all the genericity conditions of the previous theorems to be satisfied. We need to prove that we can possibly restrict the set of ``good'' final points $p$ to a smaller (but still dense) set for which $J|_{\Omega_p}$ is Morse-Bott.

Part (a) of the definition of a Morse-Bott function given in the Appendix \ref{app:morsebott}, immediately follows from Theorem \ref{structure}.

We proceed to prove part (b). Let us take a Lagrange multiplier $\eta$, its corresponding critical manifold $C_{\eta}$ and a point $u \in C_{\eta}$. Since we already know that the critical manifold is compact, it remains to show that the Hessian of the energy $J$ is non-degenerate outside the tangent space to the critical manifold $C_{\eta}$. Since both $q$ and $J$ are quadratic, they coincide with their second derivatives. By the Lagrange multiplier rule we get the expression for the Hessian:
$$
\he_{u}(q)=(d^{2}J-\eta D^{2}q_{u})|_{T_u\Omega_p}=(J-\eta q)|_{T_u\Omega_p},
$$
where we have $T_u\Omega_p=\ker D_{u}q$. Notice that to the quadratic form defined by the Hessian it corresponds the self-adjoint operator $
\mathbbm{1}-\eta Q.$

The Hessian is degenerate in the direction of $v \in T_{u}\Omega_{p}$  if and only if:
$$
\langle v-\eta Q v,x \rangle=0 \quad \quad \forall x \in T_{u}\Omega_{p},
$$
meaning that $v-\eta Q v$ is orthogonal to $T_{u}\Omega_{p}$. Since the tangent space $T_{u}\Omega_{p}$ is the orthogonal space to $\textrm{span}\{Q_{1}u,\ldots,Q_{l}u\}$, then $v-\eta Q v$ is a linear combination of the vectors $Q_{i}u$, namely $\lambda_{1}Q_{1}u + \ldots +\lambda_{l}Q_{l}u$. 

We can eventually restate the degeneracy condition by the following equations:
\begin{equation} \label{eq:degeneracy}
\left\{ \begin{array}{c} v-\eta Q v = \lambda Qu \\
         \langle v,Qu \rangle=0 
         \end{array}
\right.
\end{equation}
where $\lambda=(\lambda_{1},\ldots,\lambda_{l}) \in (\dd)^{*}$ as above.

If we take $\lambda = 0$ we see that the degeneracy condition is satisfied by the vectors in:
$$
E(\eta) \cap T_{u}\Omega_{p} = T_{u}C_{\eta},
$$
and we have to prove that for the generic choice of $p$ the degeneracy equation (\ref{eq:degeneracy}) does not admit other solutions.

Let us consider the smooth manifold $\Lambda_{\vec{n}}$ of Lagrange multipliers containing $\eta$ (the definition of $\Lambda_{\vec{n}}$ is given before Lemma \ref{distinct}). 
Let us call as before $E_{\vec{n}}$ the fiber bundle with base space $\Lambda_{\vec{n}}$ and fiber $E(\omega)$ with $\omega \in\Lambda_{\vec{n}}$ (see the above discussion).

The tangent space to $E_{\vec{n}}$ at $(u,\eta)$ is determined as follows: take a curve $(u(t),\eta(t))$ in $E_{\vec{n}}$ based on $(u,\eta)$ and compute its tangent vector in $t=0$. Differentiating the condition $
\eta(t)Qu(t)=u(t),$
we get:
$$
\dot{u}-\eta Q\dot{u}=\dot{\eta}Qu,
$$
which is the same condition as for the degeneracy of the Hessian (the first equation in (\ref{eq:degeneracy})).

We consider now the smooth semialgebraic map: $$f:E_{\vec{n}}\to \dd\quad\textrm{given by}\quad (\omega, v)\mapsto q(v).$$ 
The set of regular values of $f$ is a dense subset of $\Delta^2$ (it is the complement of a semialgebraic set of codimension at least one): this subset is the good one we want to restrict to. In other words we consider $\Sigma_4$ to be the union of the set of critical values of the various $f:E_{\vec{n}}\to \dd$ as $\vec{n}$; the complement of $\Sigma_4$ contains generic points\footnote{Thus at this stage $p\in \dd\backslash (\Sigma_1\cup\Sigma_2\cup\Sigma_3\cup \Sigma_4).$}. 
On the preimage of a ``good'' $p$ we know that the differential of $f$ is surjective with rank $l$; moreover:
$$
\textrm{dim}\, E_{\vec{n}}=\textrm{dim}\,\Lambda_{\vec{n}}+\textrm{dim}\,E(\eta)=l-\nu+2\nu=l+\nu.
$$
Looking at the dimensions of the domain and the range of $d_{u}f$ we get that the kernel has dimension $\nu$. On one hand, the kernel of $d_{u}f$ is the vector space satisfyng both the equations (\ref{eq:degeneracy}) for the degeneracy of the Hessian; on the other hand we have the inclusion:
$$
E(\eta) \cap T_{u}\Omega_{p} \subset \ker d_{u}f.
$$
Since both these spaces have dimension $\nu$, they must be equal. It follows that with all the above generic restrictions on $p,$ the only directions of degeneracy for the Hessian are in $T_{u}C_{\eta}$.

\begin{remark} \label{finiteindex}
We know that the operator $\eta Q$ is compact, and that the eigenvalues are of the form $\pm\frac{\alpha_{i}}{k}$ with $k \in \mathbb{N}$ non-zero and $i=1,\ldots,s < \infty$. Then the eigenvalues of the Hessian of the energy on a critical point are $1\pm\frac{\alpha_{i}}{k}$; it follows that the number of negative eigenvalues is always finite, so that the index of every critical manifold is finite.
\end{remark}

It remains to prove property (c) of the definition (the Palais-Smale property). Let us consider a sequence $\{u_{k}\}$ in $\Omega_{p}$ with energy $\|u_{k}\|^{2}$ bounded by $E$ and such that $\nabla \psi_{u_{k}} \rightarrow 0$, where $\psi\doteq J|_{\Omega_{p}}$.

The gradient $\nabla \psi_{u}$ is the orthogonal projection of $\nabla J_{u}=u$ on the space $\textrm{span}\{Q_{1}u,\ldots,Q_{l}u\}^{\perp},$ then if we define $\pi_{u}$ to be the orthogonal projection on the space $\textrm{span}\{Q_{1}u,\ldots,Q_{l}u\}$ we have
$$
\nabla \psi_{u}=u-\pi_{u}u\,.
$$
From $\{u_{k}\}$ we can extract a subsequence (we keep  calling it $\{u_{k}\}$) such that $\|u_{k}\|^{2} \rightarrow L$. Now we can compute
$$
\|\nabla \psi_{u}\|^{2}=\langle u,u \rangle -2\langle u, \pi_{u}u \rangle +  \langle \pi_{u}u, \pi_{u}u \rangle = \langle u,u \rangle -\langle \pi_{u}u, \pi_{u}u \rangle\,,
$$
where the second equality follows from $\langle u, \pi_{u}u \rangle=\langle \pi_{u}u, \pi_{u}u \rangle$ being $\pi_{u}$ an orthogonal projection. \\
Since $\langle u_{k},u_{k} \rangle \rightarrow L\,,$ and $\|\nabla \psi_{u_{k}}\|^{2}= \langle u_{k},u_{k} \rangle -\langle \pi_{u_{k}}u_{k}, \pi_{u_{k}}u_{k} \rangle \rightarrow 0\,, $ it follows that $$\langle \pi_{u_{k}}u_{k}, \pi_{u_{k}}u_{k} \rangle \rightarrow L\,.$$ 
Now, every $\pi_{u_{k}}u_{k}$ is a linear combination of the vectors $Q_{i}u_{k}$, namely
$$
\pi_{u_{k}}u_{k}=\sum_{i=1}^{l}\eta_{k}^{i}Q_{i}u_{k}\,,
$$
and by computing its norm we get
\begin{equation} \label{eq:converge}
\sum_{i,j=1}^{l}\eta_{k}^{i}\eta_{k}^{j}\langle Q_{i}u_{k},Q_{j}u_{k} \rangle \rightarrow L\,.
\end{equation}
Since the operators $Q_{i}$ are compact, they map the bounded sequence $\{u_{k}\}$ to a sequence $\{Q_{i}u_{k}\}$ with limit points, so we can iteratively extract converging subsequences (again we keep calling them $\{u_{k}\}$) and we have $
Q_{i}u_{k} \rightarrow v_{i}.$
In this way the equation (\ref{eq:converge}) becomes
$$
\sum_{i,j=1}^{l}\eta_{k}^{i}\eta_{k}^{j} \langle v_{i},v_{j} \rangle \rightarrow L\,,
$$
where the coefficients $\langle v_{i},v_{j} \rangle$ give the scalar product of the whole Hilbert space $H$ restricted to the finite dimensional subspace $V=\textrm{span}\{v_{i},\ldots,v_{l}\}$. Now we have a bounded sequence of vectors $\eta_{n}=\sum_{i=1}^{l}\eta_{n}^{i}v_{i}$ in $\mathbb{R}^{l}$ from which we can extract a converging sequence with limit $\eta$.

So the sequence $\pi_{u_{k}}u_{k}=\sum_{i=1}^{l}\eta_{k}^{i}Q_{i}u_{k}$ tends to $v=\sum_{i=1}^{l}\eta^{i}v_{i}$, and since
$$
0=\lim_{k \rightarrow \infty}\|\nabla \psi_{u_{k}}\|^{2}=\lim_{k \rightarrow \infty}\|u_{k}-\pi_{u_{k}}u_{k}\|^{2}=\lim_{k \rightarrow \infty}\|u_{k}-v\|^{2}\,,
$$
also the sequence $\{u_{k}\}$ tends to $v$. Moreover, since $v_{i}=\lim_{k \rightarrow \infty}Q_{i}u_{k}=Q_{i}v$ we have that $v=\eta Qv$ which is the condition for $v$ to be a critical point of the Energy.
\end{proof}

\section{Admissible-path space and its topology}\label{sec:paths}
\subsection{Paths with bounded energy}

Despite Theorem \ref{regularvalue} shows that (a priori) only some of the $\Omega_p$ are Hilbert manifolds, they are in fact all homotopy equivalent to each other; the argument is a simple modification of the standard one for loop spaces and appeared first in \cite{Ge}; we recall it here for convenience of the reader.

\begin{teo}\label{homeqad}For every $p_1, p_2\in G$ the spaces $\Omega_{p_1}$ and $\Omega_{p_2}$ are homotopy equivalent.

\end{teo}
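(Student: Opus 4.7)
The plan is to adapt the classical argument that based loop spaces at different basepoints on a path-connected manifold are homotopy equivalent, with the caveat that here all admissible curves are defined on the fixed interval $I=[0,2\pi]$, so instead of plain concatenation we must use reparametrized concatenation. Since $\Delta$ is bracket generating, Chow's theorem produces an admissible path $\beta:I\to G$ with $\beta(0)=p_1$ and $\beta(2\pi)=p_2$; let $\overline{\beta}(t)=\beta(2\pi-t)$ be its time-reverse, which is again admissible. I define $\Phi:\Omega_{p_1}\to\Omega_{p_2}$ by
\[
\Phi(\gamma)(t)=\begin{cases}\gamma(2t) & 0\le t\le \pi,\\ \beta(2t-2\pi) & \pi\le t\le 2\pi,\end{cases}
\]
and symmetrically $\Psi:\Omega_{p_2}\to\Omega_{p_1}$ using $\overline{\beta}$ in place of $\beta$ (and interchanging endpoints). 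In the control chart $\Omega\leftrightarrow L^2(I,\R^d)$, the operation $\gamma\mapsto\gamma(2\,\cdot)$ corresponds to $u\mapsto 2u(2\,\cdot)\chi_{[0,\pi]}$ and the concatenation with $\beta$ is an affine translation by a fixed element of $L^2$, so $\Phi$ and $\Psi$ are continuous (in fact affine).

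Next I would exhibit an explicit homotopy $H:[0,1]\times\Omega_{p_1}\to\Omega_{p_1}$ between $H_0=\mathrm{id}$ and $H_1=\Psi\circ\Phi$ that progressively inserts the $\beta\cdot\overline{\beta}$ detour at $p_1$. Concretely, for $s\in[0,1]$ I split $I$ as $[0,(2-s)\pi]\cup[(2-s)\pi,(2-s/2)\pi]\cup[(2-s/2)\pi,2\pi]$ and set $H_s(\gamma)$ to be $\gamma$ reparametrized onto the first subinterval, followed by $\beta$ reparametrized onto the second, followed by $\overline{\beta}$ reparametrized onto the third. At $s=0$ the detour has width zero and $H_0(\gamma)$ is (a reparametrization of) $\gamma$; at $s=1$ one recognises $\Psi\circ\Phi(\gamma)$. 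A symmetric construction with the roles of $\beta$ and $\overline{\beta}$ swapped gives $\Phi\circ\Psi\simeq \mathrm{id}_{\Omega_{p_2}}$, whence $\Phi$ is a homotopy equivalence.

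The one point that really needs care, and presumably the reason the authors credit Ge for the construction, is joint continuity of $H$ in $(s,\gamma)$ at $s=0$. Time-rescaling $u\mapsto\sqrt{a}\,u(a\,\cdot)$ is isometric on $L^2$, but the concatenation is piecewise defined and as $s\to 0$ the $\beta$-$\overline{\beta}$ piece collapses: one must check that the $L^2$-mass of the collapsing segment tends to $0$ (which is clear since it equals $\sqrt{s}\,\|\beta'\|_{L^2}$ after rescaling), and that the rescaled $\gamma$-part converges to $\gamma$ in $L^2(I,\R^d)$. Both facts follow from the standard change-of-variable identity for $L^2$ norms and dominated convergence, uniformly on bounded subsets of $\Omega_{p_1}$. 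Once $H$ is verified to be continuous as a map $[0,1]\times\Omega_{p_1}\to L^2(I,\R^d)$ landing in $\Omega_{p_1}$ (the endpoint tracking is automatic from the piecewise definition), the proof is complete.
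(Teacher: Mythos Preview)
Your argument is correct and follows essentially the same concatenation-with-a-fixed-path strategy as the paper (which reduces first to $\Omega_p\simeq\Omega_e$ and then concatenates loops with a fixed $\gamma_0\in\Omega_p$ and its reverse, using the partial paths $\gamma_\epsilon(t)=\gamma_0(\epsilon(1-t))$ to produce the homotopy). You are more careful than the paper about continuity in the $L^2$ topology; the only loose end is that your $H_1(\gamma)$ is a reparametrization of $\Psi\circ\Phi(\gamma)$ rather than literally equal to it, so one further (routine) linear reparametrization homotopy is needed to close the argument.
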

\begin{proof}
It is sufficient to prove that for every $p\in G$ the space $\Omega_p$ is homotopy equivalent to $\Omega_e.$
To this end let $\gamma_0\in \Omega_p$ be a fixed \emph{admissible} path and define the map:
$$A:\Omega_e\to \Omega_p$$
by concatenation of loops in $\Omega_e$ with $\gamma$: $A(\gamma)= \gamma_0\gamma$ (velocities have to be rescaled). Let also $\hat\gamma_0$ be $\gamma_0$ with backward time (it connects $p$ to $e$); then define $$B:\Omega_p\to \Omega_e$$ by conatenation with $\hat\gamma_0:$ $B(\gamma)=\hat{\gamma}_0\gamma.$ Let now $\gamma_\epsilon, \,\epsilon \in [0,1]$ be the paths:
$$\gamma_\epsilon(t)=\gamma_0(\epsilon(1-t))$$ and $L_\epsilon: \gamma \to \gamma_\epsilon \hat{\gamma}_\epsilon \gamma$. the maps $L_\epsilon$ give a homotopy between is the identity $L_0=\textrm{id}:\Omega_e\to \Omega_e$ and $L_1=AB$. In a similar way $BA$ is homotopic to the identity on $\Omega_p$ and the two spaces are homotopy equivalent.
\end{proof}

As a corollary we see that $\Omega_p$ is contractible (in particular all its nonzero Betti numbers vanish).
\begin{coro}For every $p\in G$ the topological space $\Omega_p$ is contractible.
\end{coro}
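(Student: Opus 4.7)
The plan is to reduce the claim to the contractibility of $\Omega_e$ using Theorem \ref{homeqad}, and then exploit the homogeneous nature of the equations defining $\Omega_e$.

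First, by Theorem \ref{homeqad}, $\Omega_p$ is homotopy equivalent to $\Omega_e$ for every $p\in G$, so it suffices to prove that $\Omega_e$ is contractible. Recalling that in the coordinates $\mathcal{U}\simeq L^2(I,\mathbb{R}^d)$ the endpoint map takes the explicit form \eqref{endpointexplicit}, the condition $F(u)=e$ becomes
\[
\int_0^{2\pi} u(t)\,dt=0 \qquad\text{and}\qquad q(u)=\tfrac{1}{2}\int_{0}^{2\pi}\!\Big\langle\!\int_0^t u(\tau)\,d\tau,\,A\,u(t)\Big\rangle dt=0.
\]
The first condition is linear (hence homogeneous of degree one) and the second is quadratic (hence homogeneous of degree two) in $u$.

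Next I observe that both equations are therefore stable under rescaling: if $u\in\Omega_e$, then $\lambda u\in\Omega_e$ for every $\lambda\in\mathbb{R}$. This means $\Omega_e$ is a cone over the origin $0\in L^2(I,\mathbb{R}^d)$, where $0$ corresponds to the constant path $\gamma\equiv e$. Define the continuous map
\[
H:\Omega_e\times[0,1]\longrightarrow \Omega_e,\qquad H(u,s)=(1-s)u.
\]
Then $H(\cdot,0)=\mathrm{id}_{\Omega_e}$ and $H(\cdot,1)\equiv 0$, exhibiting an explicit deformation retract of $\Omega_e$ onto the constant loop. Continuity of $H$ is immediate from continuity of scalar multiplication in $L^2$, and each $H(\cdot,s)$ indeed lands in $\Omega_e$ by the homogeneity just noted.

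This proves $\Omega_e$ is contractible, and therefore so is $\Omega_p$ for every $p\in G$. No real obstacle arises: the argument is a direct combination of Theorem \ref{homeqad} and the homogeneity of the defining equations. The only point worth flagging is that $\Omega_e$ is a genuinely singular space (a cone rather than a manifold in general, consistently with the discussion preceding the theorem), but the cone structure is precisely what delivers the contraction.
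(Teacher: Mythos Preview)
Your proof is correct and follows exactly the paper's approach: reduce to $\Omega_e$ via Theorem \ref{homeqad}, then observe that $\Omega_e$ is defined by homogeneous equations and hence is a cone contracting to the constant loop. You have simply spelled out in detail what the paper compresses into the single phrase ``this is obvious since it is given by homogeneous equations.''
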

\begin{proof}
By the above Theorem it is enough to show that $\Omega_e$ is contractible, and this is obvious since it is given by \emph{homogeneous} equations.
\end{proof}
We start now the study of Morse-Bott theory of the function $J|_{\Omega_p}=f$ on $\Omega_p$. In the sequel we will use the notation:
$$\Omega_p^s\doteq \Omega_p\cap \{J\leq s\}.$$
Proposition \ref{propo:bound} tells us that in the case $p$ is not a vertical point the number of geodesics joining $e$ to $p$ is finite; in particular if $s>0$ is large enough $f$ does not have critical points on $\{f\geq s\}$ and the topology of $\Omega_p^s$ stabilizes. 
\begin{propo}
If $p$ is not a vertical point, then for every $s$ large enough and $t>0$ the inclusion:
$$\Omega_p^s\hookrightarrow \Omega_p^{s+t}\quad \textrm{is a homotopy equivalence}.$$
\end{propo}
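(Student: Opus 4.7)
The approach is classical Morse theory: above all critical values of $f = J|_{\Omega_p}$, the negative gradient flow provides a strong deformation retraction of the upper sublevel set onto the lower one. First I would invoke Proposition \ref{propo:bound}, which says that since $p$ is not vertical (and implicitly $p$ is a regular value of $F$, so that $\Omega_p$ is a Hilbert manifold as in the generic case treated so far), the number of geodesics from $e$ to $p$ is finite. Consequently the critical values of $f$ form a finite set; choose $s_0$ strictly greater than all of them. Then for every $s \geq s_0$ and every $t > 0$ the strip $f^{-1}([s, s+t])$ contains no critical points.

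Next I would construct the deformation by flowing backward along $-\nabla f$ from $\Omega_p^{s+t}$ into $\Omega_p^s$. Two standard inputs are needed: (i) the Palais-Smale condition for $f$ on $\Omega_p$, and (ii) a uniform positive lower bound on $\|\nabla f\|$ over the non-critical strip. For (i) I would adapt the argument of Theorem \ref{morsebottpalaissmale}(c): the normal bundle to $\Omega_p \subset L^2(I,\R^d)$ at a control $u$ is now spanned by the gradients of all $d+l$ components of the end-point map $F$, the horizontal $d$ of which are the constant functions $e_i$ (finite rank) and the vertical $l$ of which are of the form $2Q_i u$ with $Q_i$ compact. In both cases the same subsequence-extraction used in Theorem \ref{morsebottpalaissmale}(c) upgrades weak to strong convergence of the projection $\pi_{u_k}u_k$, whence the Palais-Smale sequence itself converges to a critical point. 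Condition (ii) is then immediate from (i): were it to fail, a sequence in $f^{-1}([s,s+t])$ with $\|\nabla f\|\to 0$ would have a limit point that is a critical point in the strip, contradicting the choice of $s_0$.

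With these ingredients the classical deformation lemma of Morse theory applies on the Hilbert manifold $\Omega_p$ and produces the retraction: each point of $\Omega_p^{s+t}$ is pushed along $-\nabla f/\|\nabla f\|^2$ for a finite time until it meets the level $\{f = s\}$, while $\Omega_p^s$ is left fixed; the resulting homotopy exhibits $\Omega_p^s \hookrightarrow \Omega_p^{s+t}$ as a homotopy equivalence (in fact as a deformation retract). The main obstacle I anticipate is the careful verification of Palais-Smale in the non-vertical setting, in particular ensuring that the $d+l$ generating vectors of the normal bundle remain uniformly linearly independent along a bounded sequence of controls; this uses the regular-value hypothesis at $p$ together with the same compactness input used in the vertical case. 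If the smoothness of $\nabla f$ near the topological boundary $\{f = s\}$ causes technical issues, the standard remedy of replacing $\nabla f$ by a locally Lipschitz pseudo-gradient vector field applies verbatim.
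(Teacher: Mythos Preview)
Your approach is correct and matches the paper's own reasoning, which in fact amounts to a single sentence preceding the proposition: Proposition~\ref{propo:bound} gives finitely many geodesics, hence no critical values above some threshold, hence the topology of $\Omega_p^s$ stabilizes (implicitly via the deformation lemma, Theorem~\ref{morsebottdeformation}). The paper does not spell out the Palais--Smale verification in the non-vertical case; your adaptation of the argument from Theorem~\ref{morsebottpalaissmale}(c), adding the $d$ constant vectors $e_i$ to the compact directions $Q_iu$, is the natural way to complete this gap and is straightforward.
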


Thus we focus on the case $p\in \dd$. As we already mentioned, we \emph{will not} use Morse-Bott theory to give a lower bound on $b(\xps)$ by counting critical manifolds; we will instead use it to reduce the problem to the study of intersection of real quadrics. In fact the following proposition shows that $\xps$ is homotopy equivalent to its boundary $\partial \xps$; since the map $q$ is quadratic, the space $\partial \xps$ is an intersection of infinite-dimensional quadrics (it is given by the quadratic equations $\|u\|^2=2s$ and $q(u)=p$).

\begin{propo}\label{boundary}
For a generic choice of the Carnot group structure $W\subset \sod$ and a generic point $p\in \dd$, for almost every $s$ the following isomorphism holds:
$$H_{*}(\xps)\simeq H_{*}(\partial \xps).$$
\end{propo}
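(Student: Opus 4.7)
The plan is to exploit two ingredients already at hand: the contractibility of $\Omega_p$ (Corollary to Theorem \ref{homeqad}) and the fact that the ``mirror'' functional $-J$, while sharing the same critical manifolds as $J$, has \emph{infinite} index at each of them, since by Remark \ref{finiteindex} the index of $J$ is finite.

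First I would set up the long exact sequence of the pair $(\Omega_p,\xps)$. Contractibility of $\Omega_p$ yields $\tilde H_n(\xps)\simeq H_{n+1}(\Omega_p,\xps)$. For a regular value $s$ of $J|_{\Omega_p}$ (which is generic, since the critical values form a discrete set by Theorem \ref{structure}), excision of the interior of $\xps$ rewrites this as $H_{n+1}(\{J\geq s\},\partial\xps)$. Plugging this into the long exact sequence of the pair $(\{J\geq s\},\partial\xps)$, the proposition reduces to showing that the superlevel set $\{J\geq s\}$ is \emph{contractible}.

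The heart of the argument is to verify this contractibility via Morse--Bott theory applied to $-J$, whose Palais--Smale property and the nondegeneracy of its critical manifolds follow immediately from those of $J$ established in Theorem \ref{morsebottpalaissmale}. The key point is that at each critical manifold $C$ the negative normal bundle of $-J$ coincides with the positive normal bundle of $J$, which is of infinite rank. By the standard Morse--Bott deformation argument, as $s$ decreases through a critical value of $J$, the superlevel set $\{J\geq s\}$ changes by attaching a disk bundle $D\to C$ of this negative normal bundle along its sphere bundle $S\to C$. Both $D$ and $S$ have contractible fibers ($D^\infty$ and $S^\infty$ respectively), so the inclusion $S\hookrightarrow D$ is a fiberwise homotopy equivalence, and hence a homotopy equivalence of total spaces; gluing $D$ to $\{J\geq s\}$ along such a map does not alter the homotopy type. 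Consequently the homotopy type of $\{J\geq s\}$ is constant across all regular values of $s$, and since for $s<\min J|_{\Omega_p}$ this set equals $\Omega_p$, it is contractible in general.

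The main obstacle is precisely this last step: making rigorous the ``invisibility of infinite-index cells''. One must combine the Palais--Smale condition (to ensure the deformation retraction between regular levels is well defined), the local Morse--Bott normal form (so that the level-crossing is described by the disk/sphere bundle picture), and an infinite-dimensional gluing lemma (to conclude that attaching an infinite-rank disk bundle along the inclusion of its sphere bundle is a homotopy equivalence). The remaining steps are formal manipulations of long exact sequences.
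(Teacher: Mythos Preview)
Your argument is correct and rests on the same key observation as the paper's: the critical manifolds of $-J$ have infinite index, so crossing them contributes nothing. The packaging, however, is different. The paper works \emph{inward}: starting from $\partial\xps$ and increasing the level of $-J$ on $\xps$, it uses Bott's theorem to compute $H_*(X^{c+\delta},X^{c-\delta})\simeq H_*(D_C^-,\partial D_C^-)$ at each crossing, notes that both $D_C^-$ and $\partial D_C^-$ deformation retract onto $C$ (infinite-rank disk and sphere bundles), hence the relative homology vanishes, and concludes directly that $H_*(\partial\xps)\simeq H_*(\xps)$. You instead work \emph{outward}: you reduce, via the long exact sequences of $(\Omega_p,\xps)$ and $(\{J\ge s\},\partial\xps)$ together with excision and the contractibility of $\Omega_p$, to the statement that the superlevel set $\{J\ge s\}$ is contractible, and then establish this by flowing all the way out to $\Omega_p$.

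Your route buys a slightly stronger conclusion (contractibility of $\{J\ge s\}$, not just a homology isomorphism), at the cost of two extra long exact sequences and the need to justify a homotopy-level gluing lemma for infinite-rank disk bundles. The paper's route is shorter and stays purely at the level of relative homology, which is all that is needed; in particular it avoids the homotopy pushout argument you flag as the ``main obstacle''. Since the statement only asks for a homology isomorphism, you could streamline your own proof by dropping the contractibility claim and arguing directly, as the paper does, that $H_*(D_C^-,\partial D_C^-)=0$.
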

\begin{proof}
We first notice that the generic $s$ is not a critical value for the energy. Let us consider now the Morse-Bott function $g=-J$ and let us denote by $X^a$ the set $\{g\leq a\}$. The critical manifolds of $g$ are the same as for $J$, except that the \emph{index} of each one of them for $g$ is infinite (since these manifolds have \emph{finite} index for $J$, then they must have \emph{infinite} index for $g$).

After passing a critical value $c$ with corresponding critical manifold $C$, the relative homology of the Lebesgue set is given by (i.e. ``the homology changes by''):
$$
H_*(X^{c+\delta}, X^{c-\delta})\simeq H_{*}(D_C^{-},\partial D_C^{-}).
$$
We recall that $D_C^{-}$ is the unit disk bundle in the fiber bundle over $C$ on which the Hessian of the Morse-Bott function is negative definite; see Theorem \ref{bott} and the subsequent discussion from Appendix \ref{app:morsebott}. Notice that here the choice of the coefficients field $\mathbb{Z}_2$ prevents us from the problem of orientability of this bundle.

This relative homology is zero: since the index of $C$ is infinite, then both $D_c^{-}$ and $\partial D_c^{-}$ retract on $C$ (this follows from the fact that the infinite dimensional sphere is contractible).

We can conclude our proof by observing that even though we pass critical values for $-J$, the homology remains the same of $\partial \xps$ until we get the whole $\xps$.
\end{proof}
Thus we see that, being $\Omega_p$ contractible, each of the Betti numbers $b_i(\Omega_p^s)$  ($i>0$) eventually vanishes as $s\to \infty$. Despite this their sum can still grow: the smallest $i>0$ for which $b_i(\Omega_p^s)\neq 0$ will get bigger and bigger and the amount of topology can increase as well: we are interested in understanding quantitatively this phenomen. \subsection{Asymptotic Morse-Bott inequalities}
Before giving an explicit bound to $b(\xps)$, we will see what \emph{would} this bound \emph{be} if we were to use Morse-Bott inequalities only. The Morse-Bott inequalities bound will follow from the count of the muber of critical manifolds with energy less than $s.$ It turns out that this bound is much worse than the actual one: in fact one has:
$$\textrm{Card}\{\textrm{critical manifolds with energy less than $s$}\}\leq O(s)^l$$
against the actual bound $b(\xps)\leq O\left(s\right)^{l-1}$ (this will be proved in the next section).

The following proposition will be fundamental for the sequel: essentially it allows to turn the direct limits arguments into a quantitative form. Roughly it says that the wave numbers of the controls associated to the geodesics grow at most with the order of their energy.

To deal with this ideas, we introduce the following useful notation: for every $L\in \mathbb{N}$ we define:
$$T^L\doteq \bigoplus_{k\leq L}T_k.$$

\begin{propo} \label{starcity}
For the generic choice of the Carnot group structure $W\subset \sod$ and the generic point $p \in \dd$ there exists a constant $c_{p} > 0 $ such that for every geodesic $\gamma \in \Omega_{p} \cap \{ J \leq s\}$, its associated control belongs to $T^{\left\lfloor s c_{p}\right\rfloor}.$
\end{propo}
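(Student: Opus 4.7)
The plan is to bound the largest wave number appearing in the Fourier expansion of a geodesic control linearly in its energy. By Lemma \ref{endpoint}, every geodesic in $\Omega_{p}$ has a control of the form $u(t)=e^{-t\omega A}u_{0}$ with $u_{0}\in E(\omega)$, so its Fourier expansion is supported on those wave numbers $k$ that are positive integer eigenvalues of $i\omega A$ for which $u_{0}$ has a nontrivial component. In particular the largest such $k^{*}=k^{*}(\omega)$ satisfies
$$k^{*}\leq\|\omega A\|_{\textrm{op}}\leq c_{A}\|\omega\|,$$
where $c_{A}$ depends only on a chosen basis of $W$. Pairing the Lagrange multiplier relation $\omega Qu=u$ with $u$ yields $\omega(p)=\langle u,\omega Qu\rangle_{H}=\|u\|_{H}^{2}=J(u)$. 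The proposition therefore reduces to the following key claim: \emph{for the generic $W\subset\sod$ and the generic $p\in\dd$, there exists $\delta_{p}>0$ such that $\omega(p)\geq\delta_{p}\|\omega\|$ for every $\omega\in\Lambda(p)$}. Granting the claim, $J(u)\leq s$ forces $\|\omega\|\leq s/\delta_{p}$ and hence $k^{*}\leq(c_{A}/\delta_{p})\,s=:c_{p}\,s$.

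To prove the key claim I would argue by contradiction, exploiting the compactness of $Q$. Suppose there exists a sequence $\omega_{n}\in\Lambda(p)$ with $\omega_{n}(p)/\|\omega_{n}\|\to 0$. Since $\omega_{n}(p)=J(u_{n})>0$ and $\Lambda(p)$ is discrete by Theorem \ref{structure}, we must have $\|\omega_{n}\|\to\infty$; after extracting a subsequence, $\hat\omega_{n}:=\omega_{n}/\|\omega_{n}\|\to\lambda$ with $\|\lambda\|=1$ and $\lambda(p)=0$. The associated geodesic controls satisfy $\|u_{n}\|_{H}^{2}=\omega_{n}(p)\leq s$, so a further subsequence converges weakly, $u_{n}\rightharpoonup u_{\infty}$, in $H$. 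Since each $Q_{i}$ is compact (see the remark after Proposition \ref{operatorQstructure}), the quadratic map $q$ is weakly sequentially continuous, so $q(u_{\infty})=\lim q(u_{n})=p$; in particular $u_{\infty}\neq 0$. Dividing $\omega_{n}Qu_{n}=u_{n}$ by $\|\omega_{n}\|$ and passing to the limit gives $\lambda Qu_{\infty}=0$.

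The hard part is to rule out such a triple $(\lambda,u_{\infty},p)$ for the generic $(W,p)$. The additional structure exploited here is that $u_{n}$ is a sum of components supported in $T_{n_{j,n}}$, where the $n_{j,n}$ are positive integer eigenvalues of $i\omega_{n}A$. After extracting a subsequence, the indices with $n_{j,n}\to\infty$ contribute weakly to zero by the Riemann--Lebesgue lemma, while the remaining ones stabilise to fixed integers $n_{1},\ldots,n_{\nu'}$. Consequently $u_{\infty}$ is supported on finitely many $T_{n_{j}}$, with each Fourier component lying in a 2-plane of $\ker(\lambda A)$; in particular $\lambda\in\Lambda_{0}=\{\det(\lambda A)=0\}$, a codimension-one semialgebraic subset of $W$. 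The bad locus of $p$ is then the projection to $\dd$ of a countable family of semialgebraic sets parametrised by $(\lambda,\{n_{j}\},$ spatial coefficients in $\ker(\lambda A))$. Stratifying $\Lambda_{0}$ by the rank of $\lambda A$ in the spirit of the proof of Lemma \ref{nomultiple}, and restricting $W\subset\sod$ to a generic subset transverse to this stratification, one checks that each stratum maps onto a semialgebraic subset of $\dd$ of dimension strictly less than $l=\dim(\dd)$. The generic $p\in\dd$ therefore avoids the bad locus, the key claim follows, and with it the proposition.
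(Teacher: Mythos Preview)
Your reduction to the ``key claim'' (which is exactly the paper's Lemma~\ref{durazzo}) and your derivation of the proposition from it are correct and match the paper's argument: the largest wave number is bounded by $\|\omega A\|$, and $\omega(p)=J(u)$, so a uniform lower bound on $\omega(p)/\|\omega\|$ does the job.

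The gap is in your proof of the key claim. You write ``the associated geodesic controls satisfy $\|u_{n}\|_{H}^{2}=\omega_{n}(p)\leq s$'', but there is no $s$ in the statement of the key claim: it must hold for \emph{all} $\omega\in\Lambda(p)$, and the energies $\omega_{n}(p)$ are not a priori bounded. This is not a cosmetic slip; your whole argument rests on it. If $\omega_{n}(p)\to\infty$ you can only normalise $v_{n}=u_{n}/\|u_{n}\|$, and then $q(v_{n})=p/\omega_{n}(p)\to 0$, so weak sequential continuity of $q$ gives $q(v_{\infty})=0$, not $p$. Worse, nothing prevents $v_{\infty}=0$: all the integer eigenvalues $k_{j}(n)$ of $i\omega_{n}A$ may run off to infinity with $\|\omega_{n}\|$, in which case the Riemann--Lebesgue argument you invoke kills every component and you extract no information about $\lambda$ or $p$. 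Your final paragraph presupposes $u_{\infty}\neq 0$ and $q(u_{\infty})=p$, so it does not close the gap.

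The paper sidesteps this by never passing to a weak limit of controls. Instead it stays in the finite-dimensional target $\dd$: write $p=\sum_{j}c_{j}(n)\,l_{j}^{+}(n)$ with $l_{j}^{+}(n)$ the half-line $q(V_{j}(\omega_{n}))$, extract limits of the directions $l_{j}^{+}(n)\to l_{j}^{+}$, and use $\lambda(p)=0$ together with $\langle\hat\omega_{n},l_{j}^{+}(n)\rangle\geq 0$ to force $\langle\lambda,l_{j}^{+}\rangle=0$ for the directions that actually carry $p$. The crucial extra ingredient you are missing is an explicit computation of the angle between $\omega$ and the normal to the hypersurface $\{\det(\eta A-i\alpha\Id)=0\}$, which yields $|\langle\hat\omega_{n},l_{j}^{+}(n)\rangle|\gtrsim k_{j}(n)/\|\omega_{n}\|$. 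This forces the relevant rescaled eigenvalues $k_{j}(n)/\|\omega_{n}\|\to 0$, so the limit eigenspaces lie in $\ker(\lambda A)$, and hence $p$ is a critical value of $q$---contradicting the genericity already established in Theorem~\ref{regularvalue}. Note that this argument works regardless of whether $\omega_{n}(p)$ stays bounded.
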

In order to prove the previous proposition we first need the following lemma:
\begin{lemma} \label{durazzo}
For the generic choice of the Carnot group structure $W\subset \sod$ and the generic point $p \in \dd$ there exists a constant $c_p > 0$ such that for every Lagrange multiplier $\omega$ associated to $p$, the following inequality holds:
$$
\frac{\langle \omega , p \rangle}{\| \omega \|} \geq \frac{1}{c_p}.
$$
\end{lemma}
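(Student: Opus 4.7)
\emph{Proof plan.} I argue by contradiction: suppose no such $c_p > 0$ exists, so there is a sequence $\omega_k \in \Lambda(p)$ with $\hat\omega_k(p) := \omega_k(p)/\|\omega_k\| \to 0$. Theorem~\ref{structure} says $\Lambda(p)$ is discrete in $W$, and $\omega_k(p) = J(u_k) > 0$ strictly (the associated geodesic is nontrivial), so the sequence cannot stay bounded; passing to a subsequence, $\|\omega_k\| \to \infty$ and $\hat\omega_k \to \hat\omega_* \in S(W)$ with $\hat\omega_*(p) = 0$.

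The key observation is that the direction of the half-line $\lpj(\omega) \subset \dd$ depends only on $\hat\omega$, not on the magnitude of $\omega$. Indeed, from the proof of Lemma~\ref{distinct}, $\lpj(\omega)$ lies in $(T_\omega \Lambda_{n_j})^\perp$, a line determined by $\hat\omega$; an explicit computation for unit $v \in V_j(\omega)$ gives
\[
q(v) = -\frac{\pi}{n_j\,\alpha_j(\hat\omega)}\,M(v)\,\hat\omega,\qquad M(v)_{ik} := \langle v, A_i A_k v\rangle,
\]
whose direction $\ell_j(\hat\omega) \in S(\dd)$ is scale-invariant in $\omega$ and depends continuously on $\hat\omega$ on the open locus where $i\hat\omega A$ has simple nonzero spectrum. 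Using Lemma~\ref{distinct}, write $p = \sum_{i=1}^{\nu_k} c_{i,k}\,\ell_{j_i}(\hat\omega_k)$ with $c_{i,k} \geq 0$.

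Restricting $p$ to the complement of a countable union of semialgebraic subsets of $\dd$ of codimension $\geq 1$ --- the ``bad'' set of final points for which some relevant asymptotic direction $\hat\omega_*$ is singular (coinciding or vanishing eigenvalues of $i\hat\omega_* A$, or linearly dependent limit rays $\ell_{j_i}(\hat\omega_*)$) --- we may pass to the limit along a further subsequence: $c_{i,k} \to c_{i,*} \geq 0$ and $\ell_{j_i}(\hat\omega_k) \to \ell_{j_i}(\hat\omega_*)$, giving a decomposition $p = \sum_i c_{i,*}\,\ell_{j_i}(\hat\omega_*)$ that is nontrivial since $p \neq 0$. A direct computation from $(\hat\omega_* A)^2 v = -\alpha_{j_i}(\hat\omega_*)^2 v$ on $V_{j_i}(\hat\omega_*)$ yields
\[
\hat\omega_*(\ell_{j_i}(\hat\omega_*)) = \frac{\alpha_{j_i}(\hat\omega_*)^2}{\|M(v)\,\hat\omega_*\|} > 0,
\]
hence $\hat\omega_*(p) = \sum_i c_{i,*}\,\hat\omega_*(\ell_{j_i}(\hat\omega_*)) > 0$, contradicting $\hat\omega_*(p) = 0$.

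The main obstacle will be the careful exclusion of the degenerate limits above --- those with coinciding or vanishing eigenvalues of $i\hat\omega_* A$, linearly dependent limit rays, or unbounded decomposition coefficients. These loci form a countable union of semialgebraic subsets of $\dd$ of codimension $\geq 1$, handled by the same stratification of $\sod$ used in Lemmas~\ref{nomultiple} and~\ref{distinct} (stratify by eigenvalue multiplicities and count dimensions of images under $(\omega,v)\mapsto q(v)$); for the generic $p$ they are avoided, so $c_p$ exists.
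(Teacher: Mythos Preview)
Your setup matches the paper's: assume a sequence $\omega_k\in\Lambda(p)$ with $\hat\omega_k(p)\to 0$, extract a limit direction $\hat\omega_*\in S(W)$ with $\hat\omega_*(p)=0$, and write $p$ as a positive combination of the half-line directions $\ell_{j_i}(\hat\omega_k)$. The divergence is at the passage to the limit.

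The step that fails is the exclusion of ``bad'' limits. You want to restrict $p$ so that any limit direction $\hat\omega_*$ of normalized Lagrange multipliers has \emph{simple nonzero} spectrum on the relevant eigenspaces, and then conclude $\hat\omega_*(\ell_{j_i}(\hat\omega_*))>0$ from your formula. But the integer eigenvalues $n_{j_i,k}$ of $i\omega_k A$ grow with $\|\omega_k\|$, and the rescaled eigenvalues $n_{j_i,k}/\|\omega_k\|$ of $i\hat\omega_k A$ may converge to zero. In that case $V_{j_i}(\hat\omega_k)$ limits into $\ker(\hat\omega_* A)$, your formula for $\ell_{j_i}$ degenerates, and $\hat\omega_*(\ell_{j_i}(\hat\omega_*))=0$, so no contradiction follows. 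You assert that the set of $p$'s admitting such degenerate limits is a countable union of codimension $\ge 1$ semialgebraic sets, but this is not established: the ``bad'' set is defined through asymptotic behavior of $\Lambda(p)$, and the stratification argument you invoke (as in Lemmas~\ref{nomultiple} and~\ref{distinct}) controls images of maps $(\omega,v)\mapsto q(v)$ over \emph{fixed} strata, not limits of sequences of Lagrange multipliers along diverging rays. Without an independent bound you are assuming what you want to prove.

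The paper's proof does not try to avoid the degenerate limit; it exploits it. Via an explicit computation of the cosine between $\omega$ and the normal to the constant-eigenvalue hypersurface $S_\alpha$, it shows $|\langle\hat\omega_n,\lpj(n)\rangle|\ge \alpha_j(n)/(\|A\|\,\|\omega_n\|)$; hence whenever $\langle\hat\omega_n,\lpj(n)\rangle\to 0$ the rescaled eigenvalue $\alpha_j(n)/\|\omega_n\|\to 0$. Since $\langle\lambda,p\rangle=0$ forces every surviving direction (those with $c_j$ not tending to $0$) to satisfy $\langle\lambda,\lpj\rangle=0$, all surviving eigenspaces limit into $\ker(\lambda A)$, and $p$ lies in $q(\ker(\lambda Q))$ --- i.e.\ $p$ is a critical value of $q$, contradicting $p\notin\Sigma_1$. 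The genericity used is exactly the one already in force (regular value of $q$), not an additional exclusion of asymptotic directions.
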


\begin{remark}Being the quantity $\langle \omega , p \rangle/\|\omega \| $ the cosinus between $\omega$ and $p$ times the norm of $p$, the lemma says that the Lagrange multipliers for $p$ are contained in a convex acute cone in $W$.

The norm on the space of covectors $\dds \cong W$ is the one induced by the inclusion $W \hookrightarrow \sod$ where $\langle X,Y\rangle = \emph{Trace}(X^{T}Y)$.
\end{remark}

Before giving the proof of the lemma we show how it implies Proposition \ref{starcity}.
\begin{proof}[Proof of proposition \ref{starcity}]
Lemma \ref{durazzo} is equivalent to
$$
\| \omega \| \leq c_p \langle \omega , p \rangle.
$$
The norm of $\omega A \in W$ can be written in terms of its eigenvalues $\alpha_{1},\ldots,\alpha_{s}$: $\|\omega\|=\|\omega A\|=\sqrt{2\alpha_{1}^{2}+\ldots+2\alpha_{s}^{2}}$; it follows that every eigenvalue of $\omega A$ is smaller than the norm of $\omega A$. Since $\omega$ is a Lagrange multiplier, if $u=u_{k_1}+\ldots+u_{k_l}$ then the $k_j$ are integer eigenvalues of $i\omega A.$ Since the energy of a geodesic $u$ associated to $\omega$ is $J(u)=\langle \omega , p \rangle$, we have
\begin{equation} \label{eq:starcity}
k_j \leq \| \omega \| \leq c_p \langle \omega , p \rangle \leq c_p s.
\end{equation}
\end{proof}

Now we go back to the proof of Lemma \ref{durazzo}.
\begin{proof}[Proof of Lemma \ref{durazzo}]
Suppose on the contrary that the constant bounding $\langle \omega , p \rangle/\|\omega\|$ from below doesn't exist, so that we can find a sequence of Lagrange multipliers $\omega_n$ such that, setting $\hat{\omega}_n \doteq \omega_{n}/\|\omega_{n}\|$, the sequence $\hat{\omega}_n (p) \rightarrow 0$. Since the sequence $\hat{\omega}_{n}$ is contained in $S^{l-1}$ which is compact, we can assume (up to subsequences) that it converges, with limit $\lambda$ such that $\langle \lambda,p\rangle=0$ by hypotesis. Up to subsequences we can also assume that every Lagrange multiplier $\omega_{n}$ has the same number of integer eigenvalues (all distinct by Lemma \ref{nomultiple}), say $\nu$. For every Lagrange multiplier $\omega_{n}$ we have the cone of the endpoints of the geodesics  associated to $\omega_{n}$; the Lagrange multiplier $\omega_{n}$ is contained in the intersection of $\nu$ hypersurfaces of matrices with constant eigenvalue equal the imaginary integers $i k_{1}(n),\ldots,i k_{r}(n)$. The direct sum $E(\omega)$ of the associated eigenvalues contains all the geodesics with Lagrange multiplier $\omega$ and $q(E(\omega))$ is the cone spanned by the normal vectors to each of these $\nu$ surfaces (Proposition \ref{distinct}), where the normal vector has to be chosen with positive scalar product with $\omega_{n}$ (since the Energy is positive).

Let us call $\lpj$ the normal vectors to the surfaces with eigenvalue equal to $k_j(n)$ respectively; up to subsequences again we can assume that every direction $\lpj(n)$ converges to some $\lpj$. The point $p$ is contained in the interior of every cone (Theorem \ref{structure}) and it can be written as $p=\sum_{j=1}^{\nu}c_{j}(n)\lpj(n)$ with $c_{j}(n) > 0$ for every $j$ and every $n$. Since $\langle \lambda,p\rangle=0$, we have
$$
0=\langle \lambda,p\rangle=\sum_{j=1}^{\nu}\langle \lambda, \lpj\rangle=\lim_{n \rightarrow \infty}\sum_{j=1}^{\nu}c_{j}(n)\langle\hat{\omega}_{n},\lpj(n)\rangle;
$$
the terms of the sum above must converge to $0$ one by one because they are non-negative.
Not every $c_{j}(n)$ can converge to $0$, otherwise $p$ would be $0$ as well. Therefore at least one of the terms $\langle\hat{\omega}_{n},\lpj(n)\rangle$ converges to $0$; in the limit $p$ is a linear combination of directions $\lpj$ with $j \in I$ for a set of indexes $I$ such that $\langle \lambda, \lpj \rangle=0$ for every $j \in I$.

Now we are going to see what happens to Lagrange multipliers associated to directions $\lpj$ such that $\lim_{n \rightarrow \infty} \langle\hat{\omega}_{n},\lpj(n)\rangle =0$. Let us take the smooth hypersurface $S_{\alpha} \subset W$ of the matrices with an eigenvalue equal to a given $i\alpha \in i\R$ and all the other eigenvalues different from $i\alpha$. Then we take a point $\omega A \in S_\alpha$ and we compute the angle between $\omega A$ and the normal vector to the surface $S_\alpha$ at $\omega A$. The surface $S_\alpha$ can be given as a zero locus of the real valued function $s_\alpha(\eta)\doteq\det (\eta A-i \alpha\Id)$ in the even-dimensional case and $i \det (\eta A-i \alpha\Id)$ in the odd-dimensional case (here for simplicity we discuss the case $d$ is even, but the proof for the odd-dimensional case is analogous).

We may assume that the matrices $A^{1},\ldots,A^{l}$ form an \emph{orthonormal} basis for $W$; moreover we can choose an orthonormal basis for $\Delta$ such that the matrix $\omega A$ is written in canonical form, i.e.~$\omega A=\diag(\alpha J_{2},\alpha_2 J_2,\ldots,\alpha_{d/2}J_2)$.
Now we compute the differential of $s_\alpha$ at $\omega A$:
$$
(ds_\alpha)_{\omega A}=\sum_{i=1,j=1}^{d}\sum_{k=1}^{l}\frac{\partial s_\alpha}{\partial m_{ij}}\frac{\partial m_{ij}}{\partial \eta_k}d\eta_k=\sum_{i=1,j=1}^{d}\sum_{k=1}^{l}\textrm{adj}(\omega A - i \alpha \Id)^{ij}a_{ij}^{k}\,d\eta_k
$$
where $m_{ij}$ are the variables for the entries of the matrices, $\eta_k$ are the coordinates on $W$ given by the components of the covectors in $\dds$, $\textrm{adj}(\omega A - i \alpha \Id)^{ij}$ is the $ij$ entry of the adjugate matrix of $\omega A - i \alpha \Id$ and $a_{ij}^{k}$ are the entries of the matrix $A^{k}$. The matrix $\omega A - i \alpha \Id$ takes the form
$$
\omega A - i \alpha \Id = \diag\left(
\left( \begin{array}{cc}
-i \alpha & \alpha \\
-\alpha & -i \alpha \\
\end{array}\right)
,
\left( \begin{array}{cc}
-i \alpha & \alpha_2 \\
-\alpha_2 & -i \alpha \\
\end{array}\right), \ldots,
\left( \begin{array}{cc}
-i \alpha & \alpha_{d/2} \\
-\alpha_{d/2} & -i \alpha \\
\end{array}\right)
\right);
$$
setting $\beta \doteq \prod_{i=2}^{d/2}(a_{i}^{2}-a^{2})$, the adjugate matrix is
$$
\textrm{adj}(\omega A - i \alpha \Id) = \diag\left(
\left( \begin{array}{cc}
-i \alpha \beta & \alpha \beta \\
- \alpha \beta & -i \alpha \beta \\
\end{array}\right),
\left(\begin{array}{cc}
0 & 0 \\
0 & 0 \\
\end{array}\right),\ldots,
\left(\begin{array}{cc}
0 & 0 \\
0 & 0 \\
\end{array}\right)
\right),
$$
and so we get
$$
(ds_\alpha)_{\omega A} = 2\alpha \beta a_{12}^{k}\, d\eta_k.
$$
Therefore we have $$\langle (ds_\alpha)_{\omega A}, \omega A \rangle=2\alpha\beta \omega_k a_{12}^{k}=2\alpha^2\beta.$$ 
Since the basis $(A^{1},\ldots,A^{l})$ is orthonormal for $W$ and so is $(d\lambda_1,\ldots,d\lambda_l)$ for $W^*$, the norm $\|(ds_\alpha)_{\omega A}\|$ is easily computed: $$\|(ds_\alpha)_{\omega A}\| = 2|\alpha \beta|\sqrt{(a_{12}^{1})^2+\ldots+a_{12}^{l})^2} = 2|\alpha \beta| \|a_{12}\|$$ where $a_{12}\doteq (a_{12}^{1},\ldots,a_{12}^{l}).$
Now we can compute the cosinus of the angle $\theta$ between $\omega$ and the normal to $S_\alpha$ at $\omega A$:
$$
\cos\theta = \frac{\langle (ds_\alpha)_{\omega A}, \omega A \rangle}{\|(ds_\alpha)_{\omega A}\|\,\|\omega A\|} = \frac{2\alpha^2\beta}{2|\alpha \beta| \, \|a_{12}\|\,\|\omega\|} = \pm \frac{\alpha}{\|a_{12}\|\,\|\omega\|}.
$$

Let us go back to the directions $\lpj$ such that $\langle \lambda , \lpj \rangle = \lim_{n \rightarrow \infty}\langle \hat{\omega}_n, \lpj(n) \rangle = 0$; recall that the imaginary integers $ik_j(n)$ are the eigenvalue corresponding to the eigenspaces mapped on $\R \lpj(n)$. Notice that the constant $\|a_{12}\|$ in the previous computation depends on the space $W$ and on the basis we choose in order to put $\omega A$ in canonical form, but now the basis is a priori different for every $\omega_n A$; indeed we have that $\|a_{12}\| \leq \|A\|$ where the norm $\|A\|$ is the one induced by the norm on $\Delta$ on its exterior algebra. It follows that
$$
|\langle \hat{\omega}_n, \lpj(n) \rangle| \geq \left| \frac{\alpha_i(n)}{\|A\| \,\|\omega_n\|} \right|,
$$
and since the first term converges to $0$, so does the second one. 

Notice that the vectors $\lpj(c)$, orthogonal to the surfaces with eigenvalue equal to $k_i(n)$, are orthogonal to the surfaces with eigenvalue equal to $\frac{k_i(n)}{\|\omega_n\|}$ as well, since the surfaces are the same up to homothety. The point $p$ is a linear combination of the directions $\lpj$ with $j \in I$ with the property that $\langle \lambda, \lpj \rangle = 0$: these directions are the images of the limit eigenspaces $E_{j}(\lambda)$ of the sequences $E_{j}(\hat{\omega_n})$. Since their eigenvalues are $\frac{k_j(n)}{\|\omega_n\|}$ which converge to $0$, the eigenvalues of the eigenspaces $E_j(\lambda)$ for $j \in I$ are 0. It follows that $p$ belongs to the image by $q$ of the kernel of $\omega Q$, which means that $p$ is a critical value for $q$, and this is absurd by hypotesis. 
\end{proof}


We prove now the theorem that gives an upper bound for the number of critical manifolds with energy bounded by $s$.

\begin{teo}\label{criticalcount}
For the generic choice of the Carnot group structure $W\subset \sod$ and of $p\in \dd$: 
$$\emph{\textrm{Card}}\{\textrm{critical manifolds with energy less than $s$} \}\leq O(s)^l. $$
\end{teo}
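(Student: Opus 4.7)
The plan is to enumerate critical manifolds by their associated Lagrange multipliers $\omega\in\Lambda(p)$, exploiting the bijection established in Theorem \ref{structure}. By Lemma \ref{durazzo} every such $\omega$ with energy $\langle\omega,p\rangle\leq s$ satisfies $\|\omega\|\leq c_p s$, and by inequality \eqref{eq:starcity} each positive integer eigenvalue $n_j$ of $i\omega A$ is likewise bounded by $c_p s$.

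First I would partition the relevant Lagrange multipliers according to the tuple $\vec n=(n_1,\ldots,n_\nu)$ of their nonzero positive integer eigenvalues. By Lemma \ref{nomultiple} these entries are distinct for the generic $p$, and by Theorem \ref{structure} we have $\nu\leq l$. The number of admissible tuples with all entries bounded by $c_p s$ is at most $\sum_{\nu=0}^{l}\binom{\lfloor c_p s\rfloor}{\nu}$, which is $O(s^l)$.

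It then remains to bound, uniformly in $\vec n$, the number of Lagrange multipliers lying in a fixed stratum $\Lambda_{\vec n}$ and whose cone $q(E(\omega))$ contains $p$. Using Lemma \ref{distinct}, this condition can be recast as a real semialgebraic system in the variables $\omega\in W$, in eigenvectors $v_j\in V_j(\omega)$, and in positive scalars $c_j$ satisfying $p=\sum_{j=1}^\nu c_j\, q(v_j)$. The defining equations---namely the eigenvector relations $(\omega A-in_j\mathbbm{1})v_j=0$ (bilinear in $(\omega,v_j)$), the quadratic cone equation, and suitable normalizations for the $v_j$---all have total degree depending only on $d$ and $l$ and \emph{not} on $\vec n$. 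A Bezout-type estimate, or equivalently a semialgebraic complexity bound in terms of the degrees of the defining polynomials, then bounds the number of isolated solutions by a constant $C=C(d,l)$.

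Multiplying the two estimates gives $\textrm{Card}\{\text{critical manifolds with } J\leq s\}\leq C\cdot O(s^l)=O(s)^l$. The main obstacle is making the uniform bound in the third step rigorous: one should verify that for $p$ outside a suitable exceptional set (contained in $\Sigma_1\cup\Sigma_2\cup\Sigma_3$ from the earlier proofs) the relevant solutions are indeed isolated, and that the polynomial complexity of the semialgebraic system can be taken to depend only on $d$ and $l$, not on the coefficients $n_j$. Once these points are in place, the two counts combine to yield the claimed polynomial bound.
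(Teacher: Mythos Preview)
Your proposal follows essentially the same architecture as the paper's proof: bound the integer eigenvalues $n_j$ by $c_p s$ via Proposition~\ref{starcity}, count the $O(s^l)$ admissible tuples $\vec n$, and then argue that for each fixed $\vec n$ the number of Lagrange multipliers is bounded by a constant depending only on $d$ and $l$.

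The one substantive difference is in how the uniform bound on multipliers per tuple is obtained. You propose a Bezout-type complexity bound on a semialgebraic system in $(\omega, v_j, c_j)$, and you correctly flag the difficulty: one must check that the solutions are isolated and that the degree data are genuinely independent of $\vec n$. The paper sidesteps this by a cleaner device: it views each $f_{n_j}(\omega)=i^d\det(\omega A - in_j\mathbbm{1})$ as a point in the fixed finite-dimensional space $P_{l,d}$ of degree-$d$ polynomials in $l$ variables, sets up a universal semialgebraic family
\[
Y_\nu=\{(f,V,\omega)\in P_{l,d}^\nu\times G(l-\nu,l)\times\mathbb{R}^l : \omega\in Z(f)\setminus\mathrm{Sing}(Z(f)),\ V=(T_\omega Z(f))^\perp,\ p\in V\},
\]
and then invokes Hardt's semialgebraic triviality for the projection $Y_\nu\to P_{l,d}^\nu$. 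Since only finitely many fiber types occur, $b_0(\pi^{-1}(f))\leq\beta_\nu$ uniformly over all $f$, and in particular over all $\vec n$. This automatically handles both issues you raise: it does not require the fiber to be zero-dimensional (it bounds $b_0$ regardless), and uniformity in $\vec n$ is immediate because the $f_{n_j}$ vary inside a fixed semialgebraic parameter space. Your Bezout route can be made to work, but the paper's parametric-triviality argument is the more robust way to close the gap you identified.
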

\begin{proof}
Let us start by considering the vector space $P_{l,d}$ of real polynomials of degree $d$ in $l$ variables. For every $\nu=1, \ldots, l$ we set $f=(f_1, \ldots, f_\nu)\in P_{l,d}^\nu$ and :
$$Y_\nu=\{(f, V, \omega)\in P_{l,d}^\nu\times G(l-\nu, l) \times \mathbb{R}^l\,|\,\omega \in Z(f)\backslash \textrm{Sing}(Z(f)),\, V=(T_\omega Z(f))^\perp,\, p\in V\}.$$
Since $Y_\nu$ is semialgebraic, we can consider the semialgebraic projection to the first factor $\pi:Y_\nu\to P_{l,d}^\nu$ and stratify $P_{l,d}^\nu=\coprod_{j=1}^s P_j$ such that $\pi$ is semialgebraically trivial over each stratum (see \cite{BCR}). In particular since there are finitely many strata, there exists a number $\beta_\nu$ such that for every $f\in P_{l,d}^\nu$:
$$b_0(\pi^{-1}(f))\leq \beta_\nu$$
(only a finite number of fibers, up to semialgebraic homeomorphism, appear).
In particular, in the case when there are finitely many $\omega$ with normals $V= (T_\omega Z(f))^\perp$ containing $p$, this construction implies their number is bounded by $\beta_\nu$.

Consider now for every $n\in \mathbb{N}$ the polynomial $f_n\in P_{l,d}$ defined by:
$$f_n(\omega)=i^{d}\det(\omega A-in\mathbbm{1})$$
(the $i^d$ factor has the only scope of turning $f_n$ into a \emph{real} polynomial in the case $d$ is odd).
We know from Theorem \ref{nomultiple} that for the generic choice of $W\subset \sod$ and $p\in \dd$ each Lagrange multiplier belongs to some $Z(f_{k_1}, \ldots, f_{k_\nu})$, where $\nu$ is the number of positive integer eigenvalues of the matrix $i\omega A$. Now if $u=u_1+\cdots+u_{\nu}$ is the control associated to a geodesic with final point $p$ and energy less than $s$, Proposition \ref{starcity} implies:
$$k_j\leq s c_p\quad \textrm{for } j=1, \ldots, \nu$$
for a constant $c_p$ depending only on $W$ and $p$.

Thus the way to get all \emph{possible} Lagrange multipliers associated to geodesics ending at $p$ with energy less than $s$  is by intersecting $\nu$ of the hypersurfaces $Z(f_k)$ for $k=1,\ldots, \lfloor sc_p\rfloor$ and $\nu=1, \ldots, l$ and considering the points in this intersection where the normal space contains $p$. There are 
$$\binom{\lfloor s c_p\rfloor}{\nu}=O(s)^\nu$$
possible ways of choosing the hypersurfaces and the above argument implies each such choice can contribute by at most $\beta_\nu$ Lagrange multipliers.

In particular the set of Lagrange multipliers for $p$ with energy less than $s$ is bounded by:
$$\textrm{Card}\{\textrm{Lagrange multipliers $\omega$ such that $\omega(p)\leq s$}\}\leq \sum_{\nu=1}^l O(s)^\nu=O(s)^l.$$
To each critical manifold with energy less than $s$ there corresponds one and only one Lagrange multiplier whose scalar product with $p$ is less than $s$, hence the conlcusion follows.  
\end{proof}

As a corollary we derive now Morse-Bott inequalities; as already stated the following bound will be improved in the next section.

\begin{coro}[Morse-Bott inequalities]\label{MorseBottinequalities} For the generic choice of $W\subset \sod$ and of the point $p\in \dd$ we have:
$$b(\xps)\leq \sum_{J(C_\omega)\leq s}b(C_\omega)\leq O(s)^l.$$
\end{coro}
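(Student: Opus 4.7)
The plan is to derive this corollary by assembling three ingredients that have already been established in the paper, with the only subtlety being the verification that the classical Morse--Bott inequalities are applicable in our infinite-dimensional setting.

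First I would invoke the standard Morse--Bott inequalities for the functional $f = J|_{\Omega_p}$. Theorem \ref{morsebottpalaissmale} provides everything needed: the critical set decomposes into nondegenerate compact critical manifolds $C_\omega$, the Palais--Smale condition holds, and by Remark \ref{finiteindex} each $C_\omega$ has \emph{finite} Morse--Bott index. Combined with Theorem \ref{regularvalue} ensuring that $\Omega_p$ is a Hilbert manifold for the generic $p$, one can apply the Morse--Bott handle attachment procedure to $\Omega_p^s = \{f \leq s\}$: as $s$ increases across a critical value $c$ whose critical manifold $C_\omega$ has negative normal bundle $D^-_\omega$ of finite rank, the pair $(\Omega_p^{c+\delta}, \Omega_p^{c-\delta})$ has the same $\mathbb{Z}_2$-homology as $(D_\omega^-, \partial D_\omega^-)$, whose total Betti number is $b(C_\omega)$ (using $\mathbb{Z}_2$ coefficients bypasses orientability of the negative bundle). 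Summing the long exact sequences of these pairs from $s=0$ to $s$ yields the weak Morse--Bott inequality
\[
b(\Omega_p^s) \;\leq\; \sum_{J(C_\omega) \leq s} b(C_\omega),
\]
which is the first inequality in the statement.

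Next I would bound $b(C_\omega)$ uniformly. By Theorem \ref{structure}, each critical manifold is diffeomorphic to a torus $(S^1)^\nu$ with $\nu \leq l$, so $b(C_\omega) = 2^\nu \leq 2^l$. This bound depends only on $l$, not on $\omega$ or $s$. Combining this with the counting result of Theorem \ref{criticalcount}, which states that the number of critical manifolds with energy at most $s$ is bounded by $O(s)^l$, we obtain
\[
\sum_{J(C_\omega) \leq s} b(C_\omega) \;\leq\; 2^l \cdot \operatorname{Card}\{C_\omega : J(C_\omega) \leq s\} \;\leq\; 2^l \cdot O(s)^l \;=\; O(s)^l,
\]
which is the second inequality.

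The genuinely nontrivial content was already absorbed into the three cited results (Theorems \ref{morsebottpalaissmale}, \ref{structure}, and \ref{criticalcount}); the only place where one needs to pay attention is the first step, since Morse--Bott inequalities in infinite dimensions are not completely automatic. The hard part, conceptually, is to be sure the handle-attachment picture works: this requires each $C_\omega$ to be compact with finite negative-index bundle and the Palais--Smale condition to hold on each sublevel set. All three conditions were established in Theorem \ref{morsebottpalaissmale} and Remark \ref{finiteindex}, so once these are in hand, the corollary follows by a direct assembly.
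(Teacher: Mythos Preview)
Your proof is correct and follows essentially the same route as the paper's: apply the Morse--Bott inequalities (whose hypotheses are verified in Theorem \ref{morsebottpalaissmale} and Remark \ref{finiteindex}), bound $b(C_\omega)$ uniformly, and invoke Theorem \ref{criticalcount}. The only cosmetic difference is that the paper bounds $b(C_\omega)$ by observing each $C_\omega$ is an intersection of $l$ quadrics in $\mathbb{R}^{2d}$ (hence finitely many homeomorphism types), whereas you use the explicit torus description from Theorem \ref{structure} to get the sharper constant $2^l$; both arguments are valid and yield the same conclusion.
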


\begin{proof} By Lemma \ref{distinct} and Theorem \ref{structure} we know that each critical manifold is an intersection of $l$ quadrics in $\mathbb{R}^{2d}$ (it is the preimage of $p$ under $q|_{E(\omega)}$): in particular the possible homeomorphism types of such manifold are finite and there is a constant $\beta$ such that $b(C_\omega)\leq \beta$ for every critical manifold $C_\omega.$

Since the sum $\sum_{J(C)\leq s}b(C)$ contains at most $O(s)^l$ terms (by Theorem \ref{criticalcount}), the conlcusion follows from Morse-Bott inequalities (see Appendix \ref{app:morsebott}).
\end{proof}

\subsection{Asymptotic total Betti number}
The aim of this section is to refine the bound for $b(\xps)$ given in Corollary \ref{MorseBottinequalities}.

We start by proving some technical results.
\begin{propo} \label{boundedindex}
For the generic choice of $W \in \sod$ and the generic point $p \in \dd$ we have: (a) the critical manifolds of $J$ on $\Omega_p$ with Energy less than $s$ coincide with the critical manifolds of $J$ restricted to $\Omega_{p} \cap T^{\lfloor s c_p\rfloor}$; (b) their index is the same either if they are considered critical manifolds for $J$ or if they are considered critical manifolds for $J$ restricted to $\Omega_{p} \cap T^{\lfloor s c_p \rfloor}$.
\end{propo}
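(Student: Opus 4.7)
The plan is to exploit two structural facts established earlier: Proposition~\ref{starcity}, which confines the control of every geodesic $u\in\Omega_p$ with $J(u)\leq s$ to the finite-dimensional subspace $T^{L}$ with $L=\lfloor s c_p\rfloor$; and Proposition~\ref{operatorQstructure}(b), which says each $T_k$ (hence also $T^L$ and its orthogonal complement) is invariant under every operator $\omega Q$ with $\omega\in(\dd)^*$.

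For part (a), Proposition~\ref{starcity} immediately places any critical point of $J|_{\Omega_p}$ with energy at most $s$ inside $\Omega_p\cap T^L$. It remains to verify that the Lagrange multiplier conditions for the restricted and unrestricted problems coincide at such a point $u$. The critical condition for $J|_{\Omega_p}$ reads $u\in\spann\{Q_1u,\ldots,Q_lu\}$; when $u\in T^L$ the $T^L$-invariance of the $Q_i$ yields $Q_iu\in T^L$, and since $\nabla(J|_{T^L})_u=u$ has normal space $\spann\{Q_iu\}$ inside $T^L$, this is exactly the criticality condition for $J|_{\Omega_p\cap T^L}$. Hence the two critical sets coincide below the level $s$.

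For part (b), at such a critical point $u$ with multiplier $\eta$ the Hessian corresponds, as in the proof of Theorem~\ref{morsebottpalaissmale}, to the self-adjoint operator $\mathbbm{1}-\eta Q$, restricted either to $T_u\Omega_p$ or to $T_u(\Omega_p\cap T^L)=T_u\Omega_p\cap T^L$. By Proposition~\ref{eigens} the nonzero eigenvalues of $\eta Q$ on $T_k$ are $\pm\alpha_i(\eta)/k$, with $\alpha_i(\eta)\leq\|\eta\|$. Combining Lemma~\ref{durazzo} with $J(u)=\langle\eta,p\rangle\leq s$ gives $\|\eta\|\leq c_p s$; since $c_p s\leq L+1\leq k$ for every $k>L$, it follows that $|\alpha_i(\eta)/k|<1$ there, so $\mathbbm{1}-\eta Q$ is positive definite on the $\eta Q$-invariant orthogonal complement $(T^L)^\perp=\bigoplus_{k>L}T_k$.

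To deduce equality of the two indices, let $V\subset T_u\Omega_p$ be a subspace on which the Hessian is negative definite. Writing $w\in V$ as $w_1+w_2$ with $w_1\in T^L$ and $w_2\in(T^L)^\perp$, the $\eta Q$-invariance renders $T^L$ and $(T^L)^\perp$ orthogonal also with respect to $\mathbbm{1}-\eta Q$, so the Hessian quadratic form splits as a sum of its values on $w_1$ and $w_2$, with the $w_2$-contribution non-negative. This forces the orthogonal projection $\pi_L\colon V\to T^L$ to be injective. The identity $\langle Q_iu,w\rangle=\langle Q_iu,w_1\rangle$, valid because $Q_iu\in T^L$, places $\pi_L(V)$ inside $T_u(\Omega_p\cap T^L)$; the same split shows that the Hessian value on $\pi_L(w)=w_1$ does not exceed the Hessian value on $w$, so $\pi_L(V)$ remains negative definite. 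Taking $V$ of maximal dimension gives $\mathrm{ind}(J|_{\Omega_p})\leq\mathrm{ind}(J|_{\Omega_p\cap T^L})$, the reverse inequality being obvious from restriction. The main subtlety is precisely this last step, since restricting a quadratic form to a subspace can in general enlarge its index; the argument works here only because $T^L$ and $(T^L)^\perp$ are Hessian-orthogonal and the complement is Hessian-positive.
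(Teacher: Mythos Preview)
Your proof is correct and follows the same strategy as the paper's: invoke Proposition~\ref{starcity} for part (a), and for part (b) bound the eigenvalues $\alpha_i(\eta)$ by $\|\eta\|\leq c_p s$ via Lemma~\ref{durazzo} to conclude that $\mathbbm{1}-\eta Q$ is positive definite on each $T_k$ with $k>L$. The paper's argument for (b) stops at the eigenvalue count on the ambient space $H$, whereas you go further and explicitly handle the restriction of the Hessian to $T_u\Omega_p$ via the projection argument; this extra care is justified, since the index in question is that of $(\mathbbm{1}-\eta Q)|_{T_u\Omega_p}$ rather than on all of $H$. A slightly shorter route to the same conclusion is to observe directly that $T_u\Omega_p=(T_u\Omega_p\cap T^L)\oplus(T^L)^\perp$ as a Hessian-orthogonal splitting (using $Q_iu\in T^L$), with the second summand Hessian-positive, so the index is carried entirely by the first summand; but your projection argument is equivalent and equally valid.
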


\begin{proof} We have already proved that there exists a constant $c_p$ such that all the critical points with energy less than $s$ are contained in $\Omega_{p} \cap T^{\lfloor s c_p \rfloor}$ (Proposition \ref{starcity}); since the spaces $T_{k}$ are orthogonal with respect to both the quadratic maps $J$ and $q$, the critical points of $J|_{\Omega_p \cap T^{\lfloor s c_p \rfloor}}$ are given by the same equations as for critical points of $J|_{\Omega_p}$ using the Lagrange multipliers rule.

Let $\omega$ be a Lagrange multiplier for $p$ with energy less than $s$. The Hessian of the energy $J$ is $\langle \textrm{Id}-\omega Q \cdot,\cdot \rangle$, so its eigenvalues are $1-\frac{\alpha_{i}(\omega)}{k}$ with $k \in \mathbb{N}_0$ and $\alpha_{i}(\omega)$ eigenvalues of $\omega A$ as usual. Therefore we have negative eigenvalues for every integer $k$ such that $k < \alpha_{i}(\omega)$ for at least one $i$; since $\alpha_i(\omega) \leq \|\omega\|$, it follows from Lemma \ref{durazzo} (as in the proof of Proposition \ref{starcity}) that
$$
k \leq \| \omega \| \leq c_p \langle \omega , p \rangle \leq  s c_p
$$
with the same constant $c_p$ as in Proposition \ref{starcity}.
\end{proof}

The next proposition tells that if we want to compute $b(\xps)$ we can restrict ourselves to the intersection with a finite dimensional subspace of the form $T^L$ (and indeed we have a quantitative control on the dimension).

\begin{propo}\label{vector}
For a generic choice of $W\subset \sod$ and of $p\in \dd$ there exists a constant $r_p>0$ such that for every $m\in \mathbb{N}$:
$$\xps \quad \textrm{deformation retracts to} \quad \xps \cap T^{\lfloor s r_p\rfloor+m}.$$
In particular: $H_*(\xps)\simeq H_*(\xps \cap T^{\lfloor s r_p\rfloor+m}).$
\end{propo}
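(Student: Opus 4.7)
The plan is to construct the deformation retract via the negative gradient flow, on $\Omega_p^s$, of the \emph{high-frequency energy}
\[
g(u)=\tfrac{1}{2}\|u^{>L}\|^2, \qquad u^{>L}=\text{orthogonal projection of }u\text{ onto }\bigoplus_{k>L}T_k.
\]
Since $\{g=0\}\cap\Omega_p=\Omega_p\cap T^L$, once I rule out critical points of $g|_{\Omega_p^s}$ with $g>0$, a standard Morse-theoretic argument yields a strong deformation retract of $\Omega_p^s$ onto $\Omega_p^s\cap T^L$. I take $r_p=2c_p$, with $c_p$ the constant of Lemma \ref{durazzo}, so that $L=\lfloor sr_p\rfloor+m\geq 2sc_p$.

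First, by the Lagrange multiplier rule, a critical point $u\in\Omega_p$ of $g$ must satisfy $u^{>L}=\omega Qu$ for some $\omega\in(\dd)^*$. Since each $Q_i$ preserves every $T_k$ (Proposition \ref{operatorQstructure}), it preserves $T^{\leq L}$ and $T^{>L}$ separately, and this equation splits as $u^{>L}=\omega Qu^{>L}$ on $T^{>L}$ together with $\omega Qu^{\leq L}=0$ on $T^{\leq L}$. By Proposition \ref{eigens}, the first relation forces $\alpha_i(\omega)/k=1$ for some $i$ and some $k>L$, and hence $\|\omega\|\geq\alpha_i(\omega)=k>L$.

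Next, $u^{>L}$ is itself the control of a genuine geodesic, with Lagrange multiplier $\omega$, ending at $q(u^{>L})=p-q(u^{\leq L})$. Using $\omega Qu^{\leq L}=0$ one has $\omega q(u^{\leq L})=\langle u^{\leq L},\omega Qu^{\leq L}\rangle=0$, whence
\[
\omega(p)=\omega q(u^{>L})=\langle u^{>L},\omega Qu^{>L}\rangle=\|u^{>L}\|^2\leq 2s.
\]
The heart of the argument---and its main obstacle---is then to obtain $\|\omega\|\leq c_p\,\omega(p)$ in this partial-multiplier setting. My plan is to derive it either (i) by invoking Lemma \ref{durazzo} on the geodesic $u^{>L}$ ending at $p'=q(u^{>L})$ together with a compactness argument bounding the constants $c_{p'}$ uniformly (by $c_p$, after shrinking $r_p$ if necessary) as $p'$ varies in the bounded semialgebraic set $\{p-q(v):v\in T^{\leq L},\,\|v\|^2\leq 2s\}$, or (ii) by rerunning the contradiction argument of Lemma \ref{durazzo} verbatim with $\omega$ treated as a Lagrange multiplier of the partial control $u^{>L}$. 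Granted this bound, $\|\omega\|\leq 2sc_p\leq L$ contradicts $\|\omega\|>L$ from the previous paragraph, so no critical point of $g|_{\Omega_p^s}$ with $g>0$ exists.

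To close the argument, I verify the Palais-Smale condition for $g|_{\Omega_p^s}$ along the blueprint of Theorem \ref{morsebottpalaissmale} (compactness of the operators $Q_i$ makes this routine), and then use the negative gradient flow of $g|_{\Omega_p}$---reflected tangentially along the boundary $\{J=s\}$ whenever the flow would otherwise exit $\Omega_p^s$---to obtain a strong deformation retract of $\Omega_p^s$ onto $\{g=0\}\cap\Omega_p^s=\Omega_p^s\cap T^L$. The homology isomorphism $H_*(\Omega_p^s)\simeq H_*(\Omega_p^s\cap T^L)$ then follows at once.
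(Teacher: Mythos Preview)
Your overall strategy coincides with the paper's: use the high-frequency energy $g$ (the paper calls it $f_L$), show it has no critical points on $\Omega_p^s$ with $g>0$ once $L$ is large enough, verify Palais--Smale, and retract along the gradient flow. Your critical-point computation is also the paper's: $u^{>L}=\omega Qu^{>L}$ and $\omega Qu^{\leq L}=0$.

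The gap is precisely at the step you flag as the ``main obstacle''. Neither of your two proposals closes it. For (i), Lemma \ref{durazzo} only furnishes a constant $c_{p'}$ when $p'$ is a \emph{regular} value lying outside the exceptional sets $\Sigma_i$; there is no reason $p'=q(u^{>L})$ should be regular, and even on the regular locus the constants $c_{p'}$ are not uniformly bounded as $p'$ approaches the critical set, so no compactness argument of the type you sketch is available. For (ii), the proof of Lemma \ref{durazzo} terminates with ``hence the endpoint is a critical value of $q$, contradiction''; rerun verbatim with endpoint $p'$, this final contradiction simply fails, since nothing prevents $p'$ from being critical. In short, you cannot import the inequality $\|\omega\|\leq c_p\,\omega(p)$ to this setting.

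The paper's resolution uses an ingredient you have already written down but do not exploit: from $\omega Qu^{\leq L}=0$ with $\omega\neq 0$, the differentials $Q_iu^{\leq L}$ are linearly dependent, so $u^{\leq L}$ is a \emph{critical point} of $q$ and $q(u^{\leq L})\in\Sigma_1$. On the other hand
\[
\|p-q(u^{\leq L})\|=\|q(u^{>L})\|\le \frac{\|P\|}{L}\,\|u^{>L}\|^2\le \frac{2s\,\|P\|}{L},
\]
so if no constant $r_p$ worked one would obtain a sequence of critical values $q(u^{\leq L_n})$ converging to $p$, contradicting that $p$ lies in the \emph{open} complement of $\Sigma_1$. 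Equivalently, one may take $r_p$ so that $L\ge 2s\|P\|/d(p,\Sigma_1)$ and argue directly. Either way, the correct constant is governed by $d(p,\Sigma_1)$, not by the $c_p$ of Lemma \ref{durazzo}; your choice $r_p=2c_p$ is unjustified. A minor further point: rather than reflecting the flow at $\{J=s\}$, the paper works on the open manifold $\Omega_p\cap\{J<s+\delta\}$ for small $\delta$, which avoids boundary technicalities.
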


\begin{proof}
Given $L \in \mathbb{N}$ we can define the function $f_L$, ``distance from $T^L$'' in the following way: every $u \in \xps$ can be uniquely written as $u = \bar{u} + v$ where $\bar{u} \in T^L$ and $v \in \left(T^L \right)^{\bot}$; then we define $f_L(u) \doteq \|v\|^2$. Assume that for a suitable $L$ there are no critical points for $f_L$ outside $\xps \cap T^L$ and the function $f_L$ satisfies the Palais-Smale condition: then we can retract the manifold $f_{L}^{-1}(\left[0,s\right])=\xps$ on the sublevel set $f_{L}^{-1}(0) = \xps \cap T^L$ by Theorem \ref{morsebottdeformation} (the function $f_L$ is bounded on $\omegat$ since $f_L \leq J \leq s$). The manifolds we are considering have boundary, but we can apply the argument above to $\Omega_p \cap \{J < s+ \delta \}$: it deformation retracts to $\xps$ again by Theorem \ref{morsebottdeformation} if we choose $\delta$ small enough not to have new critical values for the Energy; we can indeed choose such $\delta$ small enough not to have new critical values for $f_L$ as well.

We are now going to prove that $f_L$ satisfies the Palais-Smale condition and to find for which $L$ we are sure not to have critical points for $f_L$ in $\xps\setminus( \xps \cap T^L)$.

The gradient of the function $f_L$ at $u=\bar{u}+v$ is $2v$ restricted to $T_u\Omega_p$: this means that there exists $\eta \in \dds$ such that $\nabla_u F_L=2v-2\eta Qu$. For every critical point $u$ for $f_L$ there exists $\eta$ such that $v-\eta Qu=v-\eta Qv - \eta Q\bar{u}=0$: since the space $T^L$ and its orthogonal are invariant with respect to $\eta Q$ we have the equivalent couple of conditions
$$
v=\eta Qv, \quad \eta Q\bar{u}=0;
$$
the first condition tells that $v$ is a geodesic going somewhere, the second one tells that $q(\bar{u})$ is a critical value for $q$.

We need $f_L$ to satisfy the Palais-Smale condition: having the explicit expression of its gradient $\nabla f_L$, we omit this verification whose proof is analogous to the one for $J$ in Theorem \ref{morsebottpalaissmale}.

We are going to prove that there exists a constant $r_p$ such that if we take $L = \left \lfloor s r_p \right \rfloor$ there are no critical points for $f_L$ in $\xps \setminus (\xps \cap T^L)$.

Assume that such a constant does not exist: then for every term $\rho_n$ of a diverging sequence of positive real numbers, we find $s_n > 0$ and a critical point $u(n)$ for the function $f_{\lfloor s_n \rho_n\rfloor}$ outside $T^{\lfloor\rho_ns_n\rfloor}$ with Energy less than $s_n$. 
By hypotesis $v(n) \in \left(T^{\lfloor \rho_nsn_n\rfloor}\right)^{\bot}$ so that its Fourier expansion is $v(n)=\sum_{k > \rho_ns_n} v_k(n)$. Recall that $P\doteq Q|_{T_{1}}$: then we have:
$$
\|q(v(n))\|= \left\|\sum_{k > \rho_ns_n} \frac{1}{k}\langle P v_k(n),v_k(n)\rangle \right\| \leq \sum_{k > \rho_ns_n} \frac{1}{k} \|P\| \|u_k(n)\|^2 \leq
$$
$$
\leq \frac{1}{\rho_ns_n}\|P\| \|v(n)\|^2 \leq \frac{1}{\rho_ns_n}\|P\| s_n = \frac{\|P\|}{\rho_n}.
$$
The last term of the chain of inequalities converges to zero: it follows that $q(v(n))$ goes to zero as well. Since $p=q(\bar{u}(n))+q(v(n))$, we get
$$
\lim_{n\rightarrow \infty}q(\bar{u}(n)) = p,
$$
and as we noticed before $\bar{u}(n)$ is a critical point for $q$. This means that we have a sequence of critical values for $q$ converging to $p$, which is impossible since we picked our $p$ into an open subset of regular values (i.e. $p\in \dd\backslash \Sigma_1$). We end the proof by noticing that the condition for having critical points for $f_{\lfloor r_ps_n\rfloor}$ has to be verified on every $T_k$ separately (they are orthogonal and invariant with respect to $Q$): since there are no critical points of $f_{\lfloor r_ps_n\rfloor}$ outside $T^{\lfloor r_ps_n\rfloor}$, there are no critical points of the same function restricted to $T^{\lfloor r_ps_n\rfloor + m}$ (similarly to what happens to the critical points of $J$ in Proposition \ref{boundedindex}) and it follows eventually that $\xps \cap T^{\lfloor r_ps_n\rfloor + m}$ deformation retracts onto $\xps \cap T^{\lfloor r_ps_n\rfloor}$ for every $m \in \mathbb{N}$.
\end{proof}

As a corollary we get the following interesting result, that controls the growth rate of the index of the highest nonzero Betti number of $\xps.$
\begin{coro}\label{maxbetti}For a generic $W\subset \sod$ and $p\in \dd$, there exists a constant $r_p>0$ such that:
$$\max_{i}\{i\,|\,b_i(\xps)\neq 0\}\leq 2d\left\lfloor r_p s\right\rfloor.$$
\end{coro}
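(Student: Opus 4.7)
The plan is to deduce the corollary as a direct consequence of Proposition \ref{vector} together with the elementary observation that the singular (or \v{C}ech) homology of a subspace of a finite-dimensional manifold vanishes above its dimension.

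More precisely, I would first invoke Proposition \ref{vector} with $m=0$: it gives a constant $r_p>0$ such that $\xps$ deformation retracts onto the intersection $\xps \cap T^{\lfloor s r_p\rfloor}$, and in particular
$$H_i(\xps) \;\simeq\; H_i\bigl(\xps \cap T^{\lfloor s r_p\rfloor}\bigr) \qquad \text{for every } i\geq 0.$$
Thus to bound the top nonvanishing Betti number of $\xps$ it is enough to bound that of $\xps \cap T^{\lfloor s r_p\rfloor}$.

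Next I would compute the dimension of $T^{\lfloor s r_p\rfloor}$. By Proposition \ref{operatorQstructure} each subspace $T_k$ equals $\Delta\otimes \mathrm{span}\{\cos kt,\sin kt\}$ and therefore has real dimension $2d$. Since
$$T^{\lfloor s r_p\rfloor} \;=\; \bigoplus_{k=1}^{\lfloor s r_p\rfloor} T_k,$$
we obtain $\dim T^{\lfloor s r_p\rfloor} = 2d \lfloor s r_p\rfloor$. Consequently $\xps \cap T^{\lfloor s r_p\rfloor}$ is a (semialgebraic) subset of a topological manifold of dimension $2d\lfloor s r_p\rfloor$, and its singular homology vanishes in degrees strictly larger than this dimension. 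Combining with the isomorphism above yields
$$b_i(\xps) \;=\; 0 \qquad \text{for all } i > 2d\lfloor s r_p\rfloor,$$
which is precisely the claim. There is no genuine obstacle here once Proposition \ref{vector} is available; the only thing to be careful about is to make the genericity assumption on $W$ and $p$ compatible with the one used in Proposition \ref{vector}, which is automatic since we inherit the same $r_p$.
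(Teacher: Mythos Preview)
Your proof is correct and follows essentially the same route as the paper: invoke Proposition \ref{vector} to replace $\xps$ by its intersection with the finite-dimensional space $T^{\lfloor r_ps\rfloor}$, then use that a semialgebraic subset of $\mathbb{R}^{2d\lfloor r_ps\rfloor}$ has no homology above degree $2d\lfloor r_ps\rfloor$. The only addition you make is spelling out why $\dim T^{\lfloor r_ps\rfloor}=2d\lfloor r_ps\rfloor$, which is helpful but does not change the argument.
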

\begin{proof}
By Proposition \ref{vector} there exists $c_p>0$ such that $H_*(\xps)\simeq H_*(\xps \cap T^{\lfloor c_ps\rfloor}).$ In particular $\xps$ has the homology of a semialgebraic subset of $\mathbb{R}^{2d\lfloor c_ps \rfloor}$ (namely $\xps\cap T^{\lfloor c_ps\rfloor}$) and its $j$-th Betti number must be zero for $j> 2d\lfloor c_ps\rfloor.$ 
\end{proof}

Everything is now ready for the proof of the main theorem of this section.

\begin{teo}[Strong Morse-Bott inequalities]\label{bettiorder}
For the generic choice of $W\subset \sod$ and of $p\in \dd$ we have:
$$b(\xps)\leq O(s)^{l-1}.$$
\end{teo}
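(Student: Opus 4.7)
The plan is to combine three tools: the homotopical reduction $H_*(\xps) \simeq H_*(\partial \xps)$ from Proposition \ref{boundary}, the finite-dimensional truncation of Proposition \ref{vector}, and the general spectral-sequence bound on the Betti numbers of an intersection of real quadrics (Appendix \ref{appendixquadrics}, cf.\ \cite{AgrachevLerarioSystems}). The saving of one power of $s$ over the naive Morse-Bott bound of Corollary \ref{MorseBottinequalities} will come from the fact that the quadric estimate is phrased on the $(l-1)$-sphere of \emph{normalized} linear combinations of the defining quadrics, whereas Morse-Bott counts Lagrange multipliers in the whole $l$-dimensional cone $W$.

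First I would pick $L = \lfloor s r_p \rfloor + m$ with $m$ large, and argue that
\[
b(\xps) \;=\; b(\partial \xps) \;=\; b(\partial \xps \cap T^L).
\]
The first equality is Proposition \ref{boundary}. For the second I would rerun the $-J$ argument of that proposition \emph{inside} the finite-dimensional manifold $\Omega_p \cap T^L$: by Propositions \ref{starcity} and \ref{boundedindex} the critical manifolds of $\pm J|_{\Omega_p \cap T^L}$ with energy $\leq s$ coincide with those of $\pm J|_{\Omega_p}$, while the $-J$-index of each such critical manifold computed inside $T^L$ is $\dim T^L$ minus its finite $J$-index and so tends to infinity with $m$. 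Combined with the a priori bound of Corollary \ref{maxbetti} on the top nonzero degree of $H_*(\xps \cap T^L)$, choosing $m$ large enough guarantees that each $-J$-cell attached at a critical passage has dimension strictly above the top nontrivial Betti number, so passing it leaves every $H_i$ unchanged.

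After rescaling $u = \sqrt{2s}\,v$, the set $\partial \xps \cap T^L$ is identified, inside $T^L \simeq \R^N$ with $N = 2dL = O(s)$, with the intersection of the unit sphere $S^{N-1}$ and the $l$ quadrics
\[
\tilde q_i(v) \;=\; q_i(v) - \frac{p_i}{2s}\,\|v\|^2 \;=\; 0, \qquad i = 1,\ldots, l.
\]
I would then invoke the bound from \cite{AgrachevLerarioSystems} which estimates the total Betti number of such a system by a constant $C_l$ depending only on $l$ times the sum over $j$ of the Betti numbers of the semialgebraic level sets $\Omega^{\geq j} = \{\omega \in S^{l-1} : i^+(\omega \tilde Q) \geq j\}$ of the positive inertia index function on $S^{l-1}$. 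Each such level set is cut out by the discriminant locus $\Sigma = \{\omega \in S^{l-1} : \omega \tilde Q \text{ is degenerate on } T^L\}$.

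Finally, I would estimate $\Sigma$ explicitly using Proposition \ref{operatorQstructure}: the spectrum of $\omega Q$ on $T^L$ consists of the numbers $\pm\alpha_i(\omega)/k$ for $k = 1,\ldots, L$ and $i$ bounded, so $\Sigma$ lies in a union of $O(L) = O(s)$ algebraic hypersurfaces of uniformly bounded degree in $S^{l-1}$, one per equation of the form $k\,\omega(p)/(2s) = \pm\alpha_i(\omega)$. By the classical Oleinik--Petrovskii--Milnor--Thom bound, the complement of $O(s)$ such hypersurfaces in the $(l-1)$-sphere has at most $O(s^{l-1})$ connected components, and each $b(\Omega^{\geq j})$ is controlled by this same quantity. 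Summing over the finitely many $j$ at which $b(\Omega^{\geq j})$ can be nonzero yields $b(\xps) \leq O(s)^{l-1}$. The main obstacle I expect is not the quadric estimate but the first step: verifying in detail that the $-J$ argument can be carried out inside $T^L$ uniformly across all homological degrees, which is exactly where Corollary \ref{maxbetti} plays the crucial role.
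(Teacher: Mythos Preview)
Your overall strategy is sound, but it is considerably more circuitous than the paper's, and the final counting step contains a genuine gap.

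\textbf{Comparison with the paper.} The paper never passes through Proposition~\ref{boundary}. It uses only Proposition~\ref{vector} to get $b(\xps)=b(\xps\cap T^{L})$ with $L=\lfloor r_p s\rfloor$, and then a short Mayer--Vietoris argument: writing $X=\{q=p\}\cap T^L$ and $A=X\cap\{\|x\|^2\le 2s+\epsilon\}$, $B=X\cap\{\|x\|^2\ge 2s-\epsilon\}$, one gets $b(\xps)=b(A)\le b(A\cup B)+b(A\cap B)$. Now $A\cup B=X$ is an intersection of $l$ quadrics in $\R^{N}$ with $N=O(s)$, and $A\cap B$ retracts onto $\partial\xps\cap T^L$, an intersection of $l$ quadrics on $S^{N-1}$. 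The black-box bound~\eqref{bettibound} from Proposition~\ref{propo:quadrictopology} gives $O(N)^{l-1}=O(s)^{l-1}$ for both pieces, and the proof is over. In particular the paper never needs your step~2 (the finite-dimensional $-J$ argument): Mayer--Vietoris replaces it in one line. Your step~2 can be made to work along the lines you sketch, but it is unnecessary.

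\textbf{The gap.} Your steps 4--5 do not yield $O(s)^{l-1}$. You claim $b(Y)\le C_l\sum_j b(\Omega^{\ge j})$, then bound each $b(\Omega^{\ge j})$ by $O(s)^{l-1}$ via Milnor--Thom, and finally ``sum over the finitely many $j$''. But the index function on $T^L$ ranges over $\{0,\dots,N\}$ with $N=2dL=O(s)$, so there are $O(s)$ values of $j$ for which $\Omega^{\ge j}$ is nonempty; the naive sum gives $O(s)\cdot O(s)^{l-1}=O(s)^{l}$, which is no better than Corollary~\ref{MorseBottinequalities}. The spectral-sequence bound in \cite{AgrachevLerarioSystems} is in terms of the \emph{relative} groups $H^*(\Omega^{\ge j},\Omega^{\ge j+1})$, and the saving comes from the fact that summing those over $j$ counts each chamber of $S^{l-1}\setminus\Sigma$ essentially once, not once per level of the index. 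If you want to run the argument at that level of detail you must use the relative version and the telescoping; otherwise, simply invoke~\eqref{bettibound} directly, as the paper does.
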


\begin{proof}
First we know from Proposition \ref{vector} that there exists $r_p>0$ such that:
$$b(\xps)=b(\xps\cap T^{\lfloor r_ps\rfloor})\quad \textrm{for all $s>0$ and $m\in \mathbb{N}$}.$$
It means that $\Omega_p^s$ has the same Betti numbers as $$\{x\in  T^{\lfloor r_ps\rfloor}\,| \, q_1(x)=p_1, \ldots, q_l(x)=p_l\}\cap \{\|x\|^2\leq 2s\}.$$ Notice that the dimension of $ T^{\lfloor r_ps\rfloor}$ is a $O(s).$ 

Let us consider the semialgebraic set (a level set of a quadratic map with $l$ components):
$$X=\{x\in  T^{\lfloor r_ps\rfloor}\,|\, q_1(x)=p_1, \ldots, q_l(x)=p_l\}$$   and $\epsilon>0$ small enough such that $X\cap \{2s-\epsilon\leq\|x\|^2\leq2s+ \epsilon\}$ deformation retracts onto $X$ (the existence of such $\epsilon$ is guaranteed by semialgebraic triviality, see \cite{BCR}). Let also $A=X\cap \{\|x\|^2\leq 2s+\epsilon$ (which deformation retracts onto $X\cap \{\|x\|^2\leq 2s\}$ and $B=X\cap \{2s-\epsilon\leq \|x\|^2\}.$ The Mayer-Vietoris exact sequence of the pair $(A, B)$ gives $b(A)+b(B)\leq b(A\cap B)+b(A\cup B)$, which implies:
$$b(\Omega_p^s)=b(A)\leq b(A\cap B)+b(A\cup B).$$
Since $A\cap B$ deformation retracts onto $X\cap \{\|x\|^2=2s\},$ then it is defined by $l$ quadratic equations on a sphere of dimension $\dim(T^{\lfloor r_ps\rfloor})-1=O(s)$; on the other hand $X=A\cup B$ is given by $l$ quadratic equations in a vector space of dimension $\dim(T^{\lfloor r_ps\rfloor})=O(s)$; hence by Proposition \ref{propo:quadrictopology} the total Betti numbers of these spaces are bounded by:
$$b(A\cap B)\leq O(s)^{l-1}\quad\textrm{and}\quad b(A\cup B)\leq O(s)^{l-1}$$
and the conclusion follows.

\end{proof}

\subsection{A topological coarea formula}
In this section we compute the first order asymptotic of $b(\xps)$ in $s$, for the case $l=2.$ It turns out that for a generic choice of the Carnot group structure $W\subset \sod$ and the point $p\in \Delta^2$, the leading term is a \emph{real} number and can be analytically computed using only the data $W=\textrm{span}\{A_1, A_2\}.$

Consider a unit circle $S^1 \subset W.$ For a generic $W$ the eigenvalues of $\omega A$ are distinct and differentiable almost everywhere (the set of matrices in $\sod$ with multiple eigenvalues is a cone with codimension 3,  and the eigenvalues are semialgebraic funtions of the parameter $\omega \in S^1$). Thus there exist semialgebraic functions $\alpha_j:S^1\to \mathbb{R}$ such that the $\alpha_j(\omega),$ for $j=1, \ldots, d,$ are the coefficients of the canonical form of $\omega A$. Given $p\in \Delta$ we consider the \emph{rational} functions $\lambda_j:S^1\to \mathbb{R}\cup \{\infty\}$ given by:
$$\lambda_j:\omega \mapsto \left|\frac{\alpha_j(\omega)}{\langle \omega , p \rangle}\right|\quad \textrm{for}\quad j=1, \ldots, d.$$
Notice that when $\omega$ approaches $p^{\perp}$ these functions might explode, that is why they are rational in $\omega$; on the other hand they are semialgebraic and differentiable almost everywhere and it makes sense to consider the integral:
\begin{equation} \label{eq:tcf}
\tau(p)\doteq\frac{1}{2}\int_{S^1}\sum_{j=1}^{d}\left|\dot{\lambda}_j(\omega)\right|-\left|\sum_{j=1}^d \dot{\lambda}_j(\omega)\right|d\omega.
\end{equation}
The convergence of the integral follows from the fact that where the derivatives of the $\lambda_j$s explode, they all have the same sign and the integrand vanishes.

The next theorem proves that for a fixed $p,$ as a function of $s$:
$$b(\xps)=\tau(p)s+o(s)\quad\textrm{as $s\to \infty$}. $$
\begin{teo}\label{coarea}
If the corank $l=2$, for a generic choice of $W\subset \sod$ and $p\in \dd$ we have:
$$\lim_{s\to \infty}\frac{ b(\xps)}{s}=\tau(p).$$
\end{teo}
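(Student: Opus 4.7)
My plan is to reduce the computation of $b(\Omega_p^s)$ to a finite-dimensional one via Proposition \ref{vector}, then apply the spectral theory of intersections of quadrics from the appendix, specialised to $l=2$, and finally identify the leading order with $\tau(p)$ through the eigenvalue formula of Proposition \ref{eigens}.

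First, by Proposition \ref{vector} there exists $r_p>0$ such that $b(\Omega_p^s)=b(\Omega_p^s\cap T^{L})$ for $L=L(s)=\lfloor r_p s\rfloor+m$. Inside $T^L\simeq\R^{2dL}$ the set $\Omega_p^s\cap T^L$ is cut out by the two quadratic equations $q_1(u)=p_1,\ q_2(u)=p_2$ together with the ball $\tfrac12\|u\|^2\leq s$. I would rerun the argument of Proposition \ref{boundary} inside $T^L$: by Proposition \ref{starcity} every geodesic of energy $\leq s$ uses only wave numbers $k\leq c_p s$, and choosing $r_p>c_p$ ensures that at every critical manifold of $J|_{\Omega_p\cap T^L}$ the function $-J$ has index growing linearly in $L$, so adding them does not change homotopy type in the relevant dimensions. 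This gives
\[
b(\Omega_p^s)=b\bigl(\{u\in T^L:\ q(u)=p,\ \|u\|^2=2s\}\bigr)+o(s).
\]

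Second, I would apply the quadratic-inequality machinery of Appendix \ref{appendixquadrics} in the case of two quadrics plus a sphere. The natural pencil is $\omega_1 q_1+\omega_2 q_2-\mu\cdot\tfrac12\|u\|^2$ with $(\omega,\mu)\in S^1\times\R_{>0}$; normalising by the requirement that the form vanish on the target level $\{q=p,\,J=s\}$ forces $\mu(\omega)=\langle\omega,p\rangle/s$, leaving $S^1$ as the parameter space. By Proposition \ref{eigens}, on $T_k$ the operator $\omega Q-\mu\, \mathrm{Id}$ has eigenvalues $\pm\alpha_j(\omega)/k-\mu$ with multiplicity two, so for $L$ large enough the positive inertia index on $T^L$ is
\[
i^{+}(\omega,s)=2\sum_{j=1}^{d}\#\bigl\{k\in\{1,\ldots,L\}:|\alpha_j(\omega)|/k>\langle\omega,p\rangle/s\bigr\}=2\sum_{j=1}^{d}\bigl\lfloor s\,\lambda_j(\omega)\bigr\rfloor,
\]
which exhibits precisely the functions $\lambda_j$ appearing in the definition of $\tau(p)$.

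Third, the appendix expresses the Betti numbers of intersections of quadrics in terms of the jumps of $i^{+}$ along the parameter circle. Intuitively, on an arc of $S^1$ where all derivatives $\dot\lambda_j$ have the same sign the integer sequence $(\lfloor s\lambda_j\rfloor)_j$ is comonotone and contributes nothing new, whereas on an arc where some $\dot\lambda_j$ have opposite signs each integer crossing generates exactly one new generator. Counting the net number of ``anti-monotone'' integer crossings per unit length of $S^1$ yields $\tfrac12(\sum_j|\dot\lambda_j|-|\sum_j\dot\lambda_j|)\cdot s+O(1)$ per arc, so converting the sum to a Riemann integral gives
\[
b(\Omega_p^s)=\frac{s}{2}\int_{S^1}\Bigl(\sum_{j=1}^{d}|\dot\lambda_j(\omega)|-\Big|\sum_{j=1}^{d}\dot\lambda_j(\omega)\Big|\Bigr)\,d\omega+o(s)=\tau(p)\,s+o(s).
\]

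The main obstacle is the last step: rigorously extracting the leading coefficient from the inertia-index count. Two delicate points need attention. The first is to control the error in $i^{+}(\omega,s)=2s\sum_j\lambda_j(\omega)+O(1)$ uniformly across the codimension-one loci of $S^1$ where two eigenvalues $\alpha_j(\omega)$ collide or where $\langle\omega,p\rangle\to 0$; fortunately, near those loci all $\dot\lambda_j$ acquire a common sign so the integrand of $\tau(p)$ vanishes, which is exactly what makes the integral in \eqref{eq:tcf} convergent. The second is to show that critical manifolds with $\nu=2$ resonances (which sit on the discrete intersection $\Lambda_{n_1}\cap\Lambda_{n_2}$) and boundary effects from the truncation level $k=L$ together contribute only $o(s)$, so neither perturbs the leading coefficient. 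Once these uniform estimates are in place, the explicit formula for $\tau(p)$ drops out from the bookkeeping of integer crossings of the semialgebraic functions $\lambda_j$ on $S^1$.
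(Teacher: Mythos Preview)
Your strategy and the paper's agree on the decisive ingredients: the homogenised pencil $\omega Q-\frac{\langle\omega,p\rangle}{s}\Id$, the identity $\mathrm{ind}^{-}(\omega)=\sum_j\lfloor s\lambda_j(\omega)\rfloor$ coming from Proposition~\ref{eigens}, and the reduction of the total Betti number to a count of integer crossings of the semialgebraic functions $\lambda_j$ on $S^1$. Where you diverge is in the order of operations, and that divergence produces a real gap.

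The paper first applies Proposition~\ref{boundary} in \emph{infinite} dimensions to get an exact isomorphism $H_*(\Omega_p^s)\simeq H_*(\partial\Omega_p^s)$, then rescales onto the unit sphere of $H$ and applies the infinite-dimensional index formula~(\ref{indexformulainfinite}) from Theorem~\ref{teo:quadrictopologyinfinite}. This yields directly the closed expression $b(\Omega_p^s)=2\mu(s)+O(1)$, where $\mu(s)$ is the number of local maxima of $\mathrm{i}^{-}_s$ on the open half-circle $P=\{\langle\omega,p\rangle<0\}$; no truncation appears and the index formula $\sum_j\lfloor s\lambda_j\rfloor$ is exact on all of $P$. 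The combinatorics is then finished by the identity $\sigma_j(s)-2\mu_j(s)=|\mathrm{i}^{-}_s(\omega_{j+1})-\mathrm{i}^{-}_s(\omega_j)|$ on each interval of common monotonicity, which after dividing by $s$ converges to $\int(\sum|\dot\lambda_i|-|\sum\dot\lambda_i|)$.

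You instead reduce first to $T^L$ via Proposition~\ref{vector} and then try to rerun the argument of Proposition~\ref{boundary} inside $T^L$. This step is not justified as written. In finite dimensions the critical manifolds of $-J$ have \emph{finite} coindex, so crossing them does change the relative homology; your observation that the coindex is $\gtrsim (r_p-c_p)s$ only gives $H_i(\Omega_p^s\cap T^L)\simeq H_i(\partial\Omega_p^s\cap T^L)$ in a range $i<D\sim s$, and says nothing about the contribution of $\partial\Omega_p^s\cap T^L$ in degrees $\ge D$, which a priori is $O(s)$ rather than $o(s)$. Relatedly, your formula $i^{+}(\omega,s)=2\sum_j\lfloor s\lambda_j(\omega)\rfloor$ is only valid where $s\lambda_j(\omega)\le L$; near $p^{\perp}$ the true index saturates at the truncation level, and one must argue that this boundary region generates no local maxima---the paper handles this automatically because in infinite dimensions those $\omega$ simply fall outside $P$. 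Both issues disappear if you invert the order: use Proposition~\ref{boundary} and formula~(\ref{indexformulainfinite}) first, and never truncate.
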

\begin{proof}
In order to compute the asymptotic for $b(\xps)$ for $s\to \infty$ we use (\ref{indexformulainfinite}). In fact we have seen that $\xps$ is homotopy equivalent to $\{v\in H\,|\, q(v)= p/s,\, \|v\|^2=1\}$ and the latter can be rewritten as:
$$\left\{v\in H\,|\, q(v)- \frac{ \|v\|^2}{s}p=0\right\}\cap S$$
where $S$ is the infinite dimensional sphere $S=\{\|v\|^2=1\}.$
In particular we can present our set as the intersection of two quadrics in $H$ on the unit sphere $S$; thus we let $\ii^-_s$ for the index function of the quadratic map $q- p \|\cdot\|^2/s$ (we are using the notation of \ref{teo:quadrictopologyinfinite}). In this setting the set $P\subset S^1$ coincides with $\{\omega \in S^1\,|\, \langle \omega , p \rangle<0\}.$ In fact if we let $p=(p_1, p_2)$, here the two quadrics we are considering are $q_1-p_1\|\cdot\|^2/s$ and $q_2- p_2\|\cdot\|^2/s$ and for every $\omega$ the selfadjoint operator on $H$ corresponding to the quadratic form $\omega q$ is $\omega Q- \omega (p)\mathbbm{1}/s.$ In particular the spectrum of $\omega Q-\omega (p)\mathbbm{1}/s$ is obtained by translating the spectrum of $\omega Q$ by $\langle \omega , p \rangle/s$ and since $\omega Q$ is compact and its spectrum is symmetric with respect to the origin, we see that in order to have finitely many negative eigenvalues we need $\langle \omega , p \rangle<0.$

On the other hand the subspaces $T_k$ are invariant by both $\omega Q$ and $ \langle \omega , p \rangle\mathbbm{1}/s,$ thus the index function can be computed as: 
\begin{align*}\ii^-_{s}(\omega)&=\sum_{k\geq 1}\ii^-\left(\omega Q- \frac{\langle \omega , p \rangle}{s}\mathbbm{1}|_{T_k}\right)=\sum_{k\geq 1}\ii^-\bigg(\frac{\omega Q_0}{k}-\frac{ \langle \omega , p \rangle}{s}\mathbbm{1}\bigg)=\\
&=\sum_{j=1}^d \bigg\lfloor \frac{s\alpha_j(\omega)}{\langle \omega , p \rangle}\bigg\rfloor=\sum_{j=1}^d \lfloor s \lambda_j(\omega)\rfloor
\end{align*}
where in the second line we have used the fact that the spectrum of $\frac{\omega Q_0}{k}-\frac{\langle \omega , p \rangle}{s}\mathbbm{1}$ is of the form $\frac{\alpha_j(\omega)}{k}-\frac{\langle \omega , p \rangle}{s}.$ In the sequel we also identify $P\subset S^1$ with a subset of $[0, 2\pi]$ in the standard way.

Denoting now by $\mu(s)$ the number of local maxima of $\ii^-_{s}$ on $P$, we see that formula (\ref{indexformulainfinite}) implies:
\begin{equation}\label{eq:formulaindex}b(\xps)=2\mu(s)+1-b_0(P_0)=2\mu(s)+O(1)\end{equation}
In fact, using the long exact sequence of the pair $(P_{j+1}, P_j)$, we can rewrite $b_0(P_{j+1}, P_j)=b_0(P_{j+1})-b_0(P_j)+b_1(P_{j+1}, P_j);$ substituting these identities into $b(\hat{\Omega}_{\epsilon p})=1+\sum_{j\geq 1}b_0(P_{j+1}, P_j)+ b_1(P_{j+2}, P_{j+1})$ we get $b(\hat{\Omega}_{\epsilon p})=1-b_0(P_0)+2 \sum_{j\geq 1}b_1(P_{j+1}, P_j)$. Since each local maximum of $\ii^-_{\epsilon}$  contributes by $1$ to one of the $b_1(P_{j+1}, P_j)$ and $b(P_0)\leq 1$ (since $P_0$ is convex), then (\ref{eq:formulaindex}) follows.

In order to compute the asymptotic of the number of maxima of $\ii^-_{\epsilon}$ we introduce the following auxiliary data. First we let $\lambda=\sum_{j=1}^d\lambda_j(\omega)$ and notice that this is a semialgebraic function. In particular we can divide $P$ into a finite number of intervals (arcs):
$$P=(\omega_0,\omega_1]\cup[\omega_1,\omega_2]\cup\cdots\cup[\omega_m, \omega_{m+1}]\cup [\omega_{m+1}, \omega_{m+2})$$
such that for every $j,k$ the functions $\alpha_j$ as well as $\alpha$ are monotone on $(\omega_k, \omega_{k+1})$. Labeling $I_k=[\omega_k, \omega_{k+1}]$ we see that also each $\lfloor s\lambda_j \rfloor$ is monotone on $I_k$. On the other hand monotonicity of $\ii^-_{s}$ is granted only where the signs of the derivatives of the $\lambda_j$ all agree. Since for the generic choice of $p$ the functions $\alpha_j$ do not vanish on $\{\omega_0, \omega_{m+2}\}$ (the orthogonal complement of $p$ on $S^1$), $\lambda_j$ approaches infinity when approaching $\omega_0$ or $\omega_{m+2}$; in particular $\ii^-_{s}$ \emph{is} monotone on $I_0$ and $I_{m+1}$ and has no local maxima on them.

For every $j\in \{1, \ldots, m\}$ let us denote respectively by $\mu_j(s)$ and $\sigma_j(s)$ the number of local maxima of $\ii^-_s$ on $I_j$ and the number of subintervals of $I_j$ where $\ii^-_s$ is constant (thus $\sigma_j(s)$ equals the number of  ``jumps" of the integer valued function $\ii^-_{s}$ on $I_j$).

For every interval $I_j=[\omega_j, \omega_{j+1}]$ we see that:
$$|\ii^-_s(\omega_{j+1})-\ii^-_s(\omega_j)|=\sigma_j(s)-2\mu_{j}(s)$$
In particular summing all these equations and using the fact that $\ii^-_{s}$ is monotone on $I_0$ and $I_{m+1}$, combining with (\ref{eq:formulaindex}) we get:
\begin{align*}\frac{ b(\xps)}{s}&=2\frac{ \mu(\epsilon)}{s}+O(1/s)=2\sum_{j=1}^m\frac{\mu_j(s)}{s}+O(1/s)=\\
&=\sum_{j=1}^m\frac{ \sigma_j(s)}{s}-\sum_{j=1}^m \left|\frac{\ii^-_s(\omega_{j+1})- \ii^-_s(\omega_j)}{s}\right|+O(1/s).
\end{align*}
Now we notice that as $s\to \infty$, the function $ \ii^-_{s}/s$ converges uniformly to $\lambda$, thus:
\begin{equation}\label{eq:first}\lim_{s\to \infty}\sum_{j=1}^m \left|\frac{\ii^-_s(\omega_{j+1})- \ii^-_s(\omega_j)}{s}\right|+O(1/s)=\sum_{j=1}^m|\lambda(\omega_{j+1})-\lambda(\omega_{j})|=\int_{\omega_1}^{\omega_{m+1}}\left|\dot{\lambda}(\omega)\right|d\omega.
\end{equation}
It remains to evaluate $\lim_{s}\sum_{j}\frac{ \sigma_j(s)}{s}.$ To this end we let $\sigma_j^i(s)$ be the number of jumps of $\left\lfloor s\lambda_i/\right\rfloor$ on the interval $I_j$. We notice that $\sigma_j^i(s)=\sum_{i=1}^d\sigma_j^i(s)+O(1):$ in fact $\ii^-_{s}$ jumps exactly when one of the $\lfloor s \lambda_j\rfloor$ jumps and these function all jump at different points (except for the points where two eigenvalues are in resonance, but these are in finite number bounded independently of $s$); we also notice that each function $\left\lfloor s\lambda_i\right\rfloor/s$ converges uniformly to $\lambda_i.$ Thus we get:
\begin{align}\nonumber\label{eq:second}\lim_{s\to \infty}\sum_{j=1}^m\frac{\sigma_j(s)}{s}&=\lim_{s\to \infty}\sum_{j=1}^m\left(\sum_{i=1}^d\frac{\sigma_j^i(s)}{s}\right)=\lim_{s\to \infty}\sum_{j=1}^m\left(\sum_{i=1}^d    \left|\frac{\lfloor s \lambda_i(\omega_{j+1})\rfloor}{s} -\frac{\left\lfloor s\lambda_i(\omega_{j})\right\rfloor}{s}\right|\right)=\\ \nonumber
&=\sum_{j=1}^m\left(\sum_{i=1}^d\left|\lambda_i(\omega_{j+1})-\lambda_{i}(\omega_j)\right|\right)=\sum_{j=1}^m\int_{\omega_j}^{\omega_{j+1}}\left(\sum_{i=1}^d\left|\dot{\lambda}_i(\omega)\right|\right)d\omega=\\
&=\int_{\omega_1}^{\omega_{m+1}}\left(\sum_{i=1}^d\left|\dot{\lambda}_i(\omega)\right|\right)d\omega.
\end{align} 
Combininig (\ref{eq:first}) and (\ref{eq:second}) we finally get:
$$\lim_{s\to \infty}\frac{b(\xps)}{s}=\int_{\omega_1}^{\omega_{m+1}}\sum_{i=1}^d\left|\dot{\lambda}_i(\omega)\right|-\left|\sum_{i=1}^d\dot{\lambda}_i(\omega)\right|d\omega=\int_P\sum_{i=1}^d\left|\dot{\lambda}_i(\omega)\right|-\left|\sum_{i=1}^d\dot{\lambda}_i(\omega)\right|d\omega$$
where the last identity follows from the fact that on $I_0$ and $I_{m+1}$ the two functions $\sum_{i=1}^d\left|\dot{\lambda}_i(\omega)\right|$ and $\left|\sum_{i=1}^d\dot{\lambda}_i(\omega)\right|$ are equal. The limit of the statement simply follows by noticing that $\alpha_i(\omega)=\alpha_i(-\omega)$ (i.e. the positive eigenvalues of $i\omega A$ are $\pi$-periodic).
\end{proof}
As a corollary we get the following result: it says that the topology of the set of paths reaching the point $\epsilon p$ with energy $J\leq 1$ explodes; in particular the number of geodesics gets unbounded as well.

\begin{coro}For a generic choice of $W\subset \sod$ and the point $p\in \dd:$
$$\lim_{\epsilon\to 0}b(\Omega_{\epsilon p}\cap \{J\leq 1\})=\infty.$$
\end{coro}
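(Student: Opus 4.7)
The argument proceeds in three steps: a rescaling reduction, an application of the topological coarea formula just established, and a verification that the leading coefficient $\tau(p)$ is strictly positive for generic choices of $(W,p)$.

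First, since $p\in\dd$ is vertical, the anisotropic dilation satisfies $\delta_\lambda(p)=\lambda^2 p$. Taking $\lambda=\sqrt{\epsilon}$ one has $\epsilon p=\delta_{\sqrt{\epsilon}}(p)$, and applying \eqref{eq:scaling2} with parameter $\sqrt{\epsilon}$ (and $c=1$) gives the homeomorphism
$$\Omega_{\epsilon p}\cap\{J\leq 1\}\;\cong\;\Omega_p\cap\{J\leq 1/\epsilon\}.$$
In particular $b(\Omega_{\epsilon p}\cap\{J\leq 1\})=b(\Omega_p^{1/\epsilon})$, so it suffices to show $b(\Omega_p^s)\to\infty$ as $s\to\infty$. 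Second, Theorem \ref{coarea} yields $b(\Omega_p^s)=\tau(p)\, s+o(s)$, hence the corollary reduces to the strict positivity $\tau(p)>0$ for generic $W\subset\sod$ and $p\in\dd$.

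For the positivity, observe that the integrand in \eqref{eq:tcf},
$$\Phi(\omega):=\sum_{j=1}^{d}\bigl|\dot\lambda_j(\omega)\bigr|-\Bigl|\sum_{j=1}^{d}\dot\lambda_j(\omega)\Bigr|,$$
is pointwise nonnegative by the triangle inequality, with $\Phi(\omega)=0$ precisely when all $\dot\lambda_j(\omega)$ share the same sign. The functions $\lambda_j(\omega)=|\alpha_j(\omega)/\langle\omega,p\rangle|$ blow up at each of the two points of $S^1\cap p^{\perp}$ (for the generic $p$ the eigenvalues $\alpha_j$ do not vanish there, as was already used at the end of the proof of Theorem \ref{coarea}); consequently on each of the two open arcs of $S^1\setminus p^{\perp}$ every $\lambda_j$ attains an interior minimum $\omega_j^\ast$, at which $\dot\lambda_j$ changes sign. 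I claim that for generic $(W,p)$ the critical points $\omega_j^\ast$ are pairwise distinct: the equality $\omega_i^\ast=\omega_j^\ast$ imposes a nontrivial semialgebraic relation on the pair $(W,p)$ (it forces the semialgebraic functions $\alpha_i$ and $\alpha_j$ on $S^1$ to be proportional to the same linear form $\langle\omega,p\rangle$ at a common critical point), cutting out a set of positive codimension in the moduli space. Whenever the $\omega_j^\ast$'s are distinct, on a short arc separating two consecutive minima $\omega_i^\ast<\omega_j^\ast$ one has $\dot\lambda_i>0$ while $\dot\lambda_j<0$, so $\Phi>0$ on a nonempty open set; hence $\tau(p)>0$.

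The main obstacle is the genericity statement in the last step: one must verify that the condition ``all $\omega_j^\ast$ coincide on at least one arc'' really does carve out a proper semialgebraic subset of the parameter space $G(2,\sod)\times\dd$. Once this is in place, the conclusion is immediate by combining the rescaling identity with the asymptotic $b(\Omega_{\epsilon p}\cap\{J\leq1\})\sim\tau(p)/\epsilon$.
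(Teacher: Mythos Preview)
Your proof is correct and follows essentially the same route as the paper: rescaling via \eqref{eq:scaling2}, reduction to $\tau(p)>0$ through Theorem~\ref{coarea}, and then finding an arc where two of the $\dot\lambda_j$ have opposite signs. The only difference is in the last step: you locate the sign change by looking at the interior minima $\omega_j^\ast$ of the $\lambda_j$ on an arc of $S^1\setminus p^\perp$ and arguing that generically at least two of them are distinct, whereas the paper instead picks a maximum $\overline\omega$ of one eigenvalue function $\alpha_i$ on $S^1$ and imposes the generic condition that $\overline\omega$ is not critical for a second eigenvalue function $\alpha_j$. The paper's version has the minor advantage that the genericity condition involves $W$ alone (the $\alpha_j$ do not depend on $p$), so the semialgebraic set to be avoided lives in $G(2,\sod)$ rather than in $G(2,\sod)\times\dd$; this makes the codimension check you flagged as ``the main obstacle'' slightly more transparent. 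In either formulation the genericity step is left at the level of a plausibility claim, so your argument is on the same footing as the paper's.
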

\begin{proof}First notice that $\Omega_{\epsilon p}\cap \{J\leq 1\}$ is homeomorphic to $\Omega_p\cap \{J\leq 1/\epsilon\}$; thus we can apply the above theorem.

In order to prove the limit it is enough to show that for the generic choice of $W$ and $p$ the integral $\tau(p)$ is not zero. Since the integrand function is always nonnegative, it's enough to prove it doesn't vanish identically. Pick two distinct eigenvalues (functions), say $i\alpha_1$ and $i\alpha_2$, for the family $\{\omega A\}_{\omega\in S^1}$. Since these functions are continuous semialgebraic, $i\alpha_1$ has a maximum point $\overline{\omega}$ and we can assume this is not a critical point for $\alpha_2$ also (this is a generic condition). Then in a neighborhood of $\overline{\omega}$ the derivatives of the corresponding $\lambda_1$ and $\lambda_2$ have different signs and the integrand is nonzero. 
\end{proof}

We conclude the section with an example where the topological coarea formula can be computed directly.

\begin{example}[Commuting matrices, corank $l=2$]
Let us fix the corank $l=2$. If the matrices $A_1$ and $A_2$ commute, they can be written simultaneously in their canonical form
$$
A_i=\textrm{diag}\left(v_{1}^{i}J_2,\ldots,v_{k}^{i},0_h \right),
$$
where as usual $J_2$ is the $2\times 2$ sympletic matrix and $0_h$ is the $h \times h$ zero matrix (possibly with $h=0$. Setting $v_{j}\doteq (v_{j}^{1},v_{j}^{2})$ and given $\omega \in \R^{2*}$, the eigenvalues of the matrix $\omega A$ are $\pm \langle\omega,v_j\rangle$. Now we pick a generic $p \in \dd$: having parametrized by $t$ the unit circle in $R^{2*}$, the functions we need in order to compute $\tau(p)$ are $\lambda_j(t) \doteq \left|\frac{\langle \omega(t),v_j \rangle}{\langle \omega(t),p \rangle} \right|$, their derivatives being
$$
\dot{\lambda}_j(t) = \frac{\sgn\left(\langle \omega(t),v_j \rangle\right)}{\sgn(\langle \omega(t),p \rangle)}  \cdot \frac{\langle \dot{\omega}(t),v_j \rangle \langle \omega(t), p  \rangle - \langle \omega(t),v_j \rangle \langle \dot{\omega}(t), p \rangle}{\langle \omega(t),p\rangle^2}.
$$
Since the curve $\omega(t)$ is the arc-length parametrization of the unit circle, for every $t$ the covectors $\omega(t),\dot{\omega}(t)$ form an orthonormal basis for $\dds$; it follows that the term
$$
m_j \doteq \langle \dot{\omega}(t),v_j \rangle \langle \omega(t), p  \rangle - \langle \omega(t),v_j \rangle \langle \dot{\omega}(t), p \rangle
$$
is the determinant of the matrix $\left(\begin{array}{cc} p^1 & a^{1}_{j} \\ p^2 & a^{2}_{j} \end{array}\right)$ which does not depend on $t$.
Since the functions $\lambda_j$ are periodic by $\pi$, we can modify the formula \eqref{eq:tcf} by integrating on the semicircle $\{\langle\omega(t),p\rangle >0\}$ and eliminating the coefficient of $1/2$ before the integral sign. 

These two remarks allow us to simplify the expression of $\dot{\lambda}(t)$:
$$
\dot{\lambda}_j(t)= \sgn(\langle \omega(t),v_j \rangle) \cdot \frac{m_j}{\langle \omega(t),p\rangle^2}.
$$
Now, in order to have the term $\tau(p)>0$, the integrand of
\begin{align*}
\tau(p) & =  \int_{\langle\omega(t),p\rangle > 0}\sum_{j=1}^{d}\big|\dot{\lambda}_j(\omega)\big|-\bigg|\sum_{j=1}^d \dot{\lambda}_j(\omega)\bigg|d\omega = \\
& = \int_{\langle\omega(t),p\rangle > 0}\frac{1}{\langle \omega(t),p\rangle^2}\cdot\left(\sum_{i=1}^{d}|m_j | - \bigg|\sum_{i=1}^{d}\sgn(\langle \omega(t),v_j \rangle)\ m_j  \bigg| \right)d\omega
\end{align*}
must be strictly positive somewhere. This happens if and only if there exist $i$ and $j$ such that for some $t_0$ the terms $\sgn(\langle \omega(t_0),v_i\rangle)m_i$ and $\sgn(\langle \omega(t_0),v_j\rangle)m_j$ have opposite sign. If $m_i \cdot m_j > 0$ we must have $t_0$ such that $\sgn(\langle \omega(t),v_i\rangle)\cdot\sgn(\langle \omega(t),v_j\rangle) < 0$: if such a $t_0$ does not exist, we have $\sgn(\langle \omega(t),v_i\rangle)\cdot\sgn(\langle \omega(t),v_j\rangle) > 0$ for all $t$ such that $\langle\omega(t),p\rangle > 0$, meaning that $v_i$ is proportional to $v_j$; a similar argument holds if $m_i \cdot m_j < 0$.

In order to have $\tau(p)=0$ the only possibility is that every $v_i$ is proportional to every other $v_j$, which is equivalent to say that $A_1$ and $A_2$ are proportional: this implies that $\dim \dd - \dim \Delta =1$ which contradicts the hypothesis.
\end{example}

\appendix


\section{Stratifications of $\sod$} \label{app:sodstrat}
Here we construct a useful stratification of $\sod$, generalizing the results from the appendix of \cite{BoscainGauthier}; for general results on stratifications and semialgebraic sets the reader is referred to \cite{BCR}.

We are interested in studying the dimensions of the semi-algebraic sets with generalized eigenvalues of given multiplicities and with given dimension of the kernel.
Every skew-symmetric matrix $A$ can be written in its canonical form as a block matrix, with blocks on the diagonal of the form
$$
\alpha J_2=\begin{pmatrix}
0 & \alpha \\
-\alpha  & 0 
\end{pmatrix},
$$
($J_2$ being the canonical symplectic matrix in $\mathfrak{so}(2)$) and a 0-block of the dimension of the kernel. By \emph{generalized eigenvalues} we mean the entries like $\alpha$.

We introduce the set:
$$
\gammas\subset \sod
$$
defined to be the set of skew-symmetric matrices in $\sod$ with: (a) dimension of the kernel equal to $k$ and (b) multiplicities of the generalized eigenvalues $m_{1},\ldots,m_{r}$ (with $m_1\geq m_2\geq\cdots m_r$).

By acting with $SO(d)$ on a matrix $A \in \gammas$ with eigenvalues $\alpha_{1},\ldots,\alpha_{r}$ (corresponding to the ordered multiplicities), we can put it in the form:
$$
\diag \left(\,\alpha_{1}J_{2m_{1}}, \ldots, \alpha_{r}J_{2m_{r}}, 0_{k}\right)\,,
$$
Let us look at the stabilizer $SO(d)_{A}$ of $A$: first of all it has to fix every eigenspace of $A$, since eigenspaces with different eigenvalues are orthogonal. On the kernel $K$ the stabilizer is the restriction of $SO(d)$ on $K$, i.e.~a copy of $SO(k)$; on the eigenspace with eigenvalue $\alpha_i$ the restriction of $SO(d)$ is a copy of $SO(2m_{i})$, but the stabilizer has to fix the symplectic matrix; it follows that the stabilizer act as a copy of:
$$
SO(2m_{i}) \cap Sp(2m_{i})=U(m_{i}).
$$
where $J_{2n}$ is the symplectic matrix in $\R^{2n}$ and $0_{k}$ is the null matrix on $\R^{k}$.

Now it is possible to compute the codimension of the orbit $\textrm{Ad}(SO(d))A$ of $A$ by the adjoint action $\textrm{Ad}$ of $SO(d)$ on $\sod$ with known stabilizer:
\begin{align*}
&\textrm{codim}\,\textrm{Ad}(SO(d))A =\dim \sod - \dim \textrm{Ad}(SO(d))A = \\
&=\dim \sod - \dim SO(d) + \dim SO(d)_{A}=\dim SO(d)_{A}\,.
\end{align*}
We know that the stabilizer $SO(d)_{A}$ is:
$$
SO(d)_{A}=SO(k) \times U(m_{1}) \times \ldots \times U(m_{r})\,,
$$
and its dimension is:
$$
\dim SO(d)_{A}=\frac{k(k-1)}{2}+\sum_{i=1}^{r}m_{i}^{2}.
$$
Let us now consider the eigenvalues $\alpha_{i}$: as long as they are distinct (so they preserve their multiplicities) they are smooth functions of the matrices \cite{Kato}. On the set $\gammas$ this condition holds true (by definition), hence we have a smooth map:
$$\psi: \gammas \to \R^{r}$$
given by $A  \mapsto  (\alpha_{i}).$
This map is indeed a submersion on the open subset $\mathcal{O}$ of vectors in $\R^{r}$ with distinct entries. The fibers of the map $\psi$ are the orbits of the adjoint action and they are diffeomorphic to a fixed manifold $SO(d) / SO(d)_{A}$; in particular $\gammas$ is a fiber bundle over $\mathcal{O}$ with fibers diffeomorphic to $SO(d) / SO(d)_{A}$. Now we can compute the codimension of $\gammas$ in $\sod$:
\begin{align} \label{eq:gammas}
\textrm{codim}_{\sod} \gammas =\ & \textrm{codim}_{\sod} \textrm{Ad}(SO(d))A - r=\frac{k(k-1)}{2} -r+\sum_{i=1}^{r}m_{i}^{2} \\
& = \frac{k(k-1)}{2} + \sum_{i=1}^{r}\left(m_{i}^2-1 \right).
\end{align}

Since we are interested in the matrices with integer eigenvalues, we will need to stratify $\gammas$ in infinite semialgebraic sets with given integer eigenvalues. Take $\vec{n}=(n_1,\ldots,n_r) \in \mathbb{N}^r$ with non-negative entries such that all the non-zero entries are distinct. Then by $\gammasn$ we will mean the stratum in $\gammas$ with the eigenvalue of multiplicity $m_i$ equal to $\ii n_i$ if and only if $n_i > 0$. The eigenvalues corresponding to zero entries of $\vec{n}$ vary in $\R$. Since by fixing an eigenvalue we drop the dimension of the stratum by 1, we have the following:
\begin{propoa} \label{stratiappendix}
Given $\vec{n} \in \mathbb{N}^r$ with the properties described above and with $\nu$ non-zero entries, the submanifold $\gammasn$ has codimension $\nu$ in $\gammas$, thus its codimension in the set of all matrices $\sod$ is
\begin{equation} \label{eq:codimresonances}
\emph{codim}_{\sod} \gammasn  = \frac{k(k-1)}{2} + \sum_{i=1}^{r}\left(\mu_{i}^{2}-1 \right) + \nu.
\end{equation}
\end{propoa}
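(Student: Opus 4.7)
The plan is to deduce the codimension count directly from the submersion $\psi$ already constructed in the paragraph preceding the proposition, by interpreting $\gammasn$ as the preimage of an affine subspace of $\mathcal{O} \subset \mathbb{R}^r$ and invoking the preimage theorem.

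First I would make precise the description of $\gammasn$ as a preimage. By definition, fixing $\vec{n} = (n_1, \ldots, n_r)$ with $\nu$ non-zero entries (say, at indices $i_1 < \cdots < i_\nu$) amounts to imposing the $\nu$ linear conditions $\alpha_{i_j} = n_{i_j}$ on the eigenvalue function $\psi$. Thus, setting
$$L_{\vec{n}} = \{(\alpha_1, \ldots, \alpha_r) \in \mathcal{O} \mid \alpha_{i_j} = n_{i_j},\ j=1,\ldots,\nu\},$$
one has $\gammasn = \psi^{-1}(L_{\vec{n}})$. The set $L_{\vec{n}}$ is the intersection of $\mathcal{O}$ with a codimension-$\nu$ affine subspace of $\mathbb{R}^r$; it is nonempty because the admissibility assumption on $\vec{n}$ (distinct nonzero entries) lets the remaining coordinates be chosen in $\mathbb{R}$ so as to avoid coincidences with the fixed $n_{i_j}$'s, and so $L_{\vec{n}}$ is a smooth submanifold of $\mathcal{O}$ of codimension $\nu$.

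Second, since $\psi : \gammas \to \mathcal{O}$ is a submersion, the preimage $\psi^{-1}(L_{\vec{n}})$ is a smooth (semialgebraic) submanifold of $\gammas$ of the same codimension $\nu$. This is the content of the first claim of the proposition:
$$\textrm{codim}_{\gammas} \gammasn = \nu.$$

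Finally, combining with the codimension formula \eqref{eq:gammas} already established, namely
$$\textrm{codim}_{\sod} \gammas = \frac{k(k-1)}{2} + \sum_{i=1}^{r}(m_i^2 - 1),$$
and using additivity of codimensions for the chain $\gammasn \subset \gammas \subset \sod$, one obtains the claimed total codimension. There is essentially no obstacle here: the only point requiring care is the verification that $L_{\vec{n}} \cap \mathcal{O}$ is nonempty and has the expected dimension, which is immediate from the admissibility condition on $\vec{n}$, and that $\psi$ is indeed a submersion onto $\mathcal{O}$, which was already established in the construction of $\psi$ in the preceding discussion.
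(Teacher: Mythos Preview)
Your proposal is correct and follows essentially the same approach as the paper. The paper's argument is the single sentence ``Since by fixing an eigenvalue we drop the dimension of the stratum by 1, we have the following,'' and your proof is a precise formalization of exactly this: you make explicit that fixing $\nu$ eigenvalues means taking the $\psi$-preimage of a codimension-$\nu$ affine slice of $\mathcal{O}$, and then invoke the preimage theorem for the submersion $\psi$ already established in the preceding discussion.
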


\section{Morse-Bott functions} \label{app:morsebott}
In this section we give a short review of Morse-Bott theory; the interested reader is referred to the original paper by Bott \cite{Bott1} and to the books \cite{Klingenberg} and \cite{chang} for more details (especially for the infinite dimensional case).\\
We recall that a function $f:X \to \mathbb{R}$ on the Hilbert manifold $X$ is called a \emph{Morse-Bott function} if: (a) the critical set is the disjoint union of compact smooth manifolds; (b) if $x$ is a critical point belonging to the critical manifold $C$ then $\ker \he_{x}f = T_{x}C;$ (c) for every sequence $\left\{x_{k}\right\}$ on $X$ such that $f(x_{k})$ is bounded and $\|\nabla f_{x_{k}}\|\to 0$, then the sequence $\left\{x_{k}\right\}$ has limit points and every limit point is critical for $f$.\\
Condition (c) is usually referred as \emph{Palais-Smale condition} and is automatically satisfied in the finite dimensional case.
The smooth manifolds of critical points are called \emph{nondegenerate critical manifolds}; notice that admitting only zero-dimensional critical manifolds we get classical Morse functions.\\
The second condition is equivalent to the non-degeneracy of the Hessian on the normal space $N_{x}C$ for $x \in C$. The \emph{index} of the critical manifold $C$ is defined as the maximum of the dimensions of subspaces $V \subset N_{x}C$ where the Hessian is negative-definite; since the Hessian is nondegenerate in the all normal bundle, this number does not depend on the point $x \in C$ and it is denoted by $\textrm{ind}(C)$. \newline
Under this assumptions it is still possible to describe what happens to the topology of the sublevels $X^c \doteq f^{-1}(-\infty,c)$ of the Morse-Bott function $f$ by increasing $c$.

Let us consider a Morse-Bott function $f:X \to \mathbb{R}$ together with a Riemannian metric $g$ on $X$ (the choice of $g$ it's only a technical convenience and in fact the following results do not depend on it). 
The first fundamental theorem of Morse theory describe how the sublevels (don't) change when $c$ increases without passing critical values:
\begin{teoa}
If $f:X\rightarrow \R$ is a Morse-Bott function on $X$ satisfying the Palais-Smale condition and the interval $\left[a,b\right] \subset \R$ does not contain critical values, $X^b$ is diffeomorphic to $X^a$.
\end{teoa}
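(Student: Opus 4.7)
The plan is to construct an explicit diffeomorphism $X^{b}\to X^{a}$ by flowing along a suitable rescaling of the negative gradient of $f$. Fix the auxiliary Riemannian metric $g$ that is part of the setup and write $\nabla f$ for the gradient of $f$ with respect to $g$. On the strip $S\doteq f^{-1}([a,b])$, which by hypothesis contains no critical points of $f$, the vector field $\nabla f$ never vanishes. The whole argument then amounts to showing that we may rescale $-\nabla f$ so that the resulting flow traverses $S$ in uniformly bounded time, and then reading off that its time-$(b-a)$ map sends $X^{b}$ to $X^{a}$.

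First I would establish the key analytic estimate: there exists $\delta>0$ such that $\|\nabla f_x\|\ge \delta$ for every $x\in S$. This is where the Palais--Smale condition (c) in the definition of a Morse--Bott function is essential. Indeed, if no such $\delta$ existed, one could pick a sequence $\{x_k\}\subset S$ with $\|\nabla f_{x_k}\|\to 0$; since $f(x_k)\in [a,b]$ is bounded, Palais--Smale forces a subsequence to converge to some $x_\infty\in S$ with $\nabla f_{x_\infty}=0$, contradicting the assumption that $[a,b]$ contains no critical values.

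Next I would set up the flow. Choose a smooth bump function $\rho:X\to[0,1]$ that equals $1$ on $f^{-1}([a,b])$ and vanishes outside a slightly larger strip $f^{-1}([a-\varepsilon,b+\varepsilon])$ containing no critical points (such $\varepsilon$ exists by the same Palais--Smale argument applied to a slightly larger interval). Define the vector field
\[
Y_x \doteq -\,\rho(x)\,\frac{\nabla f_x}{\|\nabla f_x\|^{2}},
\]
which is smooth on $X$, compactly supported in a critical-point-free region, and uniformly bounded in $g$-norm by $\delta^{-1}$. Standard ODE theory on Hilbert manifolds (together with the uniform norm bound which prevents escape in finite time) gives a complete flow $\varphi_t:X\to X$. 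The crucial computation is
\[
\frac{d}{dt}\,f(\varphi_t(x)) \;=\; g\bigl(\nabla f,\,Y\bigr)\bigl|_{\varphi_t(x)} \;=\; -\,\rho(\varphi_t(x))
\]
which equals $-1$ as long as $\varphi_t(x)$ remains inside $S$. Consequently, for any $x\in X^{b}$, the value $f(\varphi_t(x))$ decreases at unit rate until it reaches $a$, at which point the trajectory crosses into $X^{a}$.

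Finally I would conclude the diffeomorphism statement. Define $\Phi:X^{b}\to X^{a}$ by $\Phi(x)=\varphi_{\max(0,\,f(x)-a)}(x)$; by the computation above, $\Phi(x)\in X^{a}$. Smoothness follows from smoothness of $\varphi_t$ and of $f$. A smooth inverse is constructed by reversing the flow in the same way using $-Y$ (or, more symmetrically, by a gradient \emph{ascent} rescaling), and the two constructions are mutual inverses on the respective sublevel sets because the $-\nabla f$ trajectories through $S$ are unique and cross every level in $[a,b]$ exactly once. I expect the main technical point to be the Palais--Smale-based lower bound on $\|\nabla f\|$: the rest is formal manipulation of the gradient flow once completeness and the unit-rate descent are in hand.
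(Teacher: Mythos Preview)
Your approach is exactly what the paper indicates: the paper does not actually prove this statement but only remarks that ``$X^b$ is deformed to $X^a$ along the integral curves of the gradient flow $\nabla f$'' and refers to the literature. Your use of Palais--Smale to obtain a uniform lower bound $\|\nabla f\|\ge\delta$ on the strip, followed by the rescaled field $Y=-\rho\,\nabla f/\|\nabla f\|^2$ with unit-rate descent, is the standard implementation of that sketch.

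One small technical point deserves attention. Your explicit map $\Phi(x)=\varphi_{\max(0,\,f(x)-a)}(x)$ is not smooth across the hypersurface $\{f=a\}$: the time function $\tau(x)=\max(0,f(x)-a)$ has a corner there, and since $\partial_t\varphi_t(x)|_{t=0}=Y_x\neq 0$ on $\{f=a\}$, the differential of $\Phi$ jumps. The clean remedy is to use the \emph{fixed-time} map $\varphi_{b-a}$ instead. Because $df(Y)=-\rho\in[-1,0]$ everywhere and $\rho\equiv 1$ on $f^{-1}([a,b])$, one checks directly that $\varphi_{b-a}(X^b)\subset X^a$ (trajectories starting in $[a,b]$ reach level $a$ in time at most $b-a$ and then continue nonincreasing) and $\varphi_{-(b-a)}(X^a)\subset X^b$ (the backward rate is at most $+1$); moreover $\varphi_{b-a}$ carries $\{f=b\}$ exactly onto $\{f=a\}$. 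Thus the global diffeomorphism $\varphi_{b-a}$ of $X$ restricts to a diffeomorphism of manifolds with boundary $X^b\to X^a$.
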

Roughly speaking $X^b$ is deformed to $X^a$ along the integral curves of the gradient flow $\nabla f$. Moreover if we let $a$ be a critical value for the Morse-Bott function $f$ the sublevels aren't diffeomorphic one to another anymore, but still there exists a deformation.
\begin{teoa} \label{morsebottdeformation}
If $f:X \rightarrow \R$ is a $C^1$ function satisfying the Palais-Smale condition and $\left(a,b\right] \subset \R$ does not contain critical values, $X^a$ is a strong deformation retract of $X^b$.
\end{teoa}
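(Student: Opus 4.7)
The approach is to construct the strong deformation retraction as the flow of a renormalized negative gradient of $f$. The main preliminary, supplied by the Palais-Smale condition together with the absence of critical values in $(a,b]$, is that for every $\delta>0$ the norm $\|\nabla f\|$ is bounded below by a positive constant on every strip $f^{-1}([a+\delta,b])$: otherwise one could extract from such a strip a Palais-Smale sequence whose limit would be a critical point at a level in $(a,b]$, contradicting the hypothesis. An analogous argument gives some $\delta_0>0$ with $f^{-1}((b,b+\delta_0])$ free of critical points.

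I would then fix a smooth cutoff $\chi:\R\to[0,1]$ equal to $1$ on $[a,b]$ and supported in $(a-\delta_0,b+\delta_0)$, and define the vector field
\[
Y(x) \;=\; -\,\chi(f(x))\,\frac{\nabla f(x)}{\|\nabla f(x)\|^2}
\]
where $\nabla f(x)\neq 0$, extended by zero elsewhere. By the choice of $\delta_0$ and the preliminary bound, the support of $\chi\circ f$ meets no critical point, so $Y$ is well-defined and continuous on all of $X$. Along any integral curve $\gamma$ of $Y$ passing through $f^{-1}([a,b])$ one computes $\tfrac{d}{dt}f(\gamma(t))=-1$, so $f$ drops at unit speed on that strip. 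Global forward-time existence of the flow $\phi_t$ follows from a further application of Palais-Smale: a finite escape time would force $\|\nabla f\|\to 0$ along $\gamma$ while $f(\gamma)$ remains bounded, yielding a Palais-Smale sequence with no limit point.

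Finally, I would set
\[
H(s,x) \;=\; \phi_{s\,(f(x)-a)_+}(x), \qquad s\in[0,1],\ x\in X^b,
\]
so that $H(0,\cdot)=\mathrm{id}_{X^b}$, $H(s,\cdot)$ fixes $X^a$ pointwise for every $s$, and $H(1,x)\in X^a$ for every $x\in X^b$. The main obstacle is verifying continuity of $H$ along the boundary $\{f=a\}$, in particular at critical points of $f$ at level $a$ (which the hypothesis does not forbid): near such points the factor $\|\nabla f\|^{-1}$ in $Y$ blows up, so orbits starting just above level $a$ could in principle wander a macroscopic distance in the short time $f(x)-a$. One controls this by estimating the arc-length of an orbit segment of duration $T=f(x)-a$ as $\int_0^T\|\nabla f(\phi_s(x))\|^{-1}\,ds$ and invoking the uniform lower bounds on $\|\nabla f\|$ on the complement of small neighborhoods of the critical set at level $a$, once again a consequence of Palais-Smale. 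This continuity verification is the delicate point; once it is in place, the remaining properties of a strong deformation retract follow from the construction.
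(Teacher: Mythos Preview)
The paper does not prove this result; it simply cites Lemma~3.2 in Chapter~1 of \cite{chang}. Your outline follows the standard approach found there (the normalized negative-gradient flow), so there is nothing to compare strategically.

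There is, however, an internal inconsistency in your sketch. You assert that ``the support of $\chi\circ f$ meets no critical point, so $Y$ is well-defined and continuous on all of $X$,'' but your $\delta_0$ was chosen only to avoid critical points \emph{above} level $b$; nothing prevents critical points at level $a$, and you yourself note later that the hypothesis does not forbid them. At such a critical point $p$ one has $\chi(f(p))=\chi(a)=1$ while $\nabla f(p)=0$, so $Y$ is undefined there and $\|Y\|\to\infty$ nearby. Thus the flow $\phi_t$ is not globally defined, and the formula $H(s,x)=\phi_{s(f(x)-a)_+}(x)$ is ill-posed precisely where you later flag continuity as ``the delicate point.'' Your closing paragraph correctly identifies the obstacle but does not resolve it: bounding the arc-length by $\int_0^T\|\nabla f\|^{-1}\,ds$ via lower bounds on $\|\nabla f\|$ \emph{away from} the critical set says nothing once the orbit enters a neighborhood of that set, where the integrand blows up. In Chang's proof (this is the content of the ``second deformation lemma'') one works on the open set of regular points, shows that an orbit from $f^{-1}((a,b])$ either reaches $\{f=a\}$ at a regular point in time exactly $f(x)-a$ or converges to a critical point at level $a$ as $t\uparrow f(x)-a$, and then verifies continuity of the resulting extension by a separate argument.

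A secondary point: since $f$ is only $C^1$, the field $\nabla f$ is merely continuous and need not generate a unique flow; one should pass to a locally Lipschitz pseudo-gradient before normalizing.
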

For the proof see Lemma 3.2 in Chapter 1 of \cite{chang}.

It only remains to recall what happens when we pass a critical value for a Morse-Bott function.
Given a critical manifold $C$ we restrict the tangent bundle $TX$ to $C$ and  consider the sub-bundle:
$$
E_C^{-}=\left\{ \textrm{directions where the Hessian of the function $f$ is negative-definite} \right\}\, .
$$
together with the unit disk bundle $D_C^{-} \subset E_C^{-}$. With this notation the following theorem generalizes the classical one (the statement we present here is actually the one in \cite{Klingenberg}).
\begin{teoa}[Bott]\label{bott} 
Let $f:X \to\mathbb{R}$ be a Morse-Bott function, and let $c$ be a critical value. For $\delta>0$ sufficently small the sublevel $X^{c+\delta} = X \cap \left\{f \leq c+\epsilon \right\}$ is homotopic to the sublevel $X^{c-\epsilon}$ with the unit disk bundle $D_C^{-}$ glued along the boundary.
\end{teoa}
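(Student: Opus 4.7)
The plan is to combine a Morse--Bott normal form near the critical manifold with a modified gradient-flow argument away from it, in the spirit of the classical proof of the Morse handle-attaching lemma. Because the critical set at level $c$ is a disjoint union of compact manifolds (condition (a)), its components are separated by a positive distance, so it suffices to treat one critical manifold $C\subset f^{-1}(c)$ at a time.

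\emph{Local normal form.} First I would establish a parametric Morse lemma along $C$: there exist a tubular neighbourhood $U$ of $C$ in $X$, diffeomorphic to an open neighbourhood of the zero section in the normal bundle $NC=E_C^{+}\oplus E_C^{-}$, together with fibrewise smooth coordinates $(x,v^{+},v^{-})$ with $x\in C$ and $v^{\pm}\in (E_C^{\pm})_x$, in which
$$
f(x,v^{+},v^{-})=c+\tfrac{1}{2}\|v^{+}\|^{2}-\tfrac{1}{2}\|v^{-}\|^{2}.
$$
This uses non-degeneracy of $\he_x f$ on normal directions (condition (b)) and compactness of $C$. In these coordinates, after a rescaling so that $D_C^{-}$ corresponds to $\{v^{+}=0,\ \|v^{-}\|^{2}\leq 2\delta\}$, the slice $X^{c+\delta}\cap U$ is cut out by $\|v^{+}\|^{2}-\|v^{-}\|^{2}\leq 2\delta$ and $X^{c-\delta}\cap U$ by the analogous inequality with $-2\delta$. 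A standard piecewise-linear deformation in the quadrant of $(\|v^{+}\|^{2},\|v^{-}\|^{2})$, pulled back fibrewise along $C$, exhibits $X^{c+\delta}\cap U$ as deformation retracting onto $(X^{c-\delta}\cap U)\cup D_C^{-}$, with $\partial D_C^{-}$ lying in $\{f=c-\delta\}$.

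\emph{Global extension by gradient flow.} Outside a slightly smaller neighbourhood $U'$ of $C$, with $\overline{U'}\subset U$, there are no critical points of $f$ in the band $f^{-1}([c-\delta,c+\delta])$ once $\delta$ is small. By the Palais--Smale condition (c), $\|\nabla f\|$ is then bounded below by a positive constant on the closure of this set. Using a smooth cutoff supported outside a yet smaller neighbourhood of $C$, I would construct a vector field $V$ on $X$ that vanishes near $C$, equals $-\nabla f/\|\nabla f\|^{2}$ on $X^{c+\delta}\setminus U'$, and satisfies $V(f)=-1$ there; its time-$2\delta$ flow --- complete thanks to the positive lower bound on $\|\nabla f\|$ --- yields a strong deformation retraction of $X^{c+\delta}\setminus U'$ onto its image inside $X^{c-\delta}\cup (U\cap X^{c+\delta})$, exactly as in the argument behind Theorem \ref{morsebottdeformation}.

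\emph{Gluing and main obstacle.} Patching the local retraction with the global one over the annular region $U\setminus\overline{U'}$, choosing the cutoffs so that the flow lines from the two constructions agree fibrewise, produces a strong deformation retraction of $X^{c+\delta}$ onto $X^{c-\delta}\cup D_C^{-}$ glued along $\partial D_C^{-}$, which is the claim. The hard part is the global step: in infinite dimensions the field $-\nabla f/\|\nabla f\|^{2}$ is not integrable without a positive lower bound on $\|\nabla f\|$, and it is precisely the Palais--Smale condition that provides this bound; ensuring that the global flow is compatible with the product structure coming from the normal form on the overlap requires a careful choice of cutoffs so that the two retractions glue continuously.
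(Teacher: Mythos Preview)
The paper does not give its own proof of this theorem: it is stated in the appendix as a classical result of Bott, with the formulation taken from Klingenberg \cite{Klingenberg}, and no argument is supplied. So there is nothing in the paper to compare your attempt against.

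That said, your sketch is the standard route one finds in those references: a parametric Morse lemma along $C$ to obtain the quadratic normal form on a tubular neighbourhood, a normalized gradient flow outside that neighbourhood (with Palais--Smale furnishing the lower bound on $\|\nabla f\|$ needed for completeness of the flow in infinite dimensions), and a careful gluing on the overlap. The outline is correct; the genuinely delicate point, which you identify, is making the two retractions compatible on the annular region. In the literature this is usually handled not by matching two separate flows but by replacing $f$ with an auxiliary function $F$ (built from $f$ and a cutoff of $\|v^-\|^2$ in the normal-form coordinates) whose sublevel $\{F\le c-\delta\}$ is already $X^{c-\delta}\cup D_C^-$ and which has no critical values in $(c-\delta,c+\delta]$; a single application of the deformation lemma then suffices. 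Your two-step version works too, but the auxiliary-function trick avoids the compatibility bookkeeping you flag as the main obstacle.
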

Since in our setting we consider homology with $\mathbb{Z}_{2}$ coefficients, it follows from the Thom isomorphism that:
$$
H_{*}(X^{c+\delta},X^{c-\delta})\simeq H_{*}(D_C^{-},\partial D_C^{-}),
$$
where in the last equation we allow the critical manifold $C$ to be nonconnected, in which case we actually have a disjoint union of different bundles (with possibly different rank, corresponding to the possibly different indexes of the components of $C$).\\
Moreover we can state \emph{Morse-Bott inequalities} in terms of the Poincar\'e polynomial of $X^s$ and Morse (Bott) polynomial of $f$, which is defined by
$$
M_{f}^s(t)=\sum_{\{C\,|\,f(C)\leq s\}}P_{C}(t)\, t^{\textrm{ind}(C)}\,,
$$
where the sum is taken amongst the critical manifolds $C$ contained in $\{f\leq s\}$; evaluations of this sum at $t=1$ gives the following.
\begin{propoa} [Morse-Bott inequalities]
$$b(X^s)\leq\sum_{\{C\,|\,f(C)\leq s\}}b(C).$$
\end{propoa}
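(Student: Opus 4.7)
The plan is to run the usual inductive Morse‑theoretic argument, passing one critical value of $f$ at a time and bounding the change in total Betti number by the Betti number of the critical manifold that was crossed. The two ingredients we will rely on are Theorem~\ref{bott} (which identifies the relative homology across a critical value with the Thom space of the negative bundle) and Theorem~\ref{morsebottdeformation} (which says the homotopy type does not change on intervals free of critical values). We work throughout with $\mathbb{Z}_2$ coefficients, which conveniently bypasses any orientability issue for the bundle $E_C^-$.

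First I would show that the sum on the right-hand side is finite. Under the Palais--Smale condition, critical values cannot accumulate in a bounded interval: if $\{C_n\}$ were an infinite sequence of distinct critical manifolds contained in $\{f\leq s\}$, picking one point $x_n\in C_n$ would produce a Palais--Smale sequence with bounded values; its limit point would lie in some $C$ and eventually on $C$ itself, contradicting disjointness of the critical manifolds. Hence we may enumerate the critical values below $s$ as $c_1<c_2<\cdots<c_N$ and denote by $\mathcal{C}_i$ the (finite) union of critical manifolds of $f$ at level $c_i$.

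Next I would set up the filtration. Choose $\delta>0$ small enough that no $c_i$ lies in $(c_{i-1}+\delta,c_{i+1}-\delta)$ for $i=1,\ldots,N$, and set $X_0=\emptyset$, $X_i=X^{c_i+\delta}$, $X_N=X^s$ (the last equality up to homotopy by Theorem~\ref{morsebottdeformation}, since $[c_N+\delta,s]$ contains no critical values). By Theorem~\ref{morsebottdeformation} again, $X^{c_i-\delta}$ is homotopy equivalent to $X_{i-1}$. Theorem~\ref{bott} and the Thom isomorphism with $\mathbb{Z}_2$ coefficients give, for each $i$,
\begin{equation*}
H_*(X_i,X_{i-1})\;\cong\;H_*(D^-_{\mathcal{C}_i},\partial D^-_{\mathcal{C}_i})\;\cong\;H_{*-\mathrm{ind}(\mathcal{C}_i)}(\mathcal{C}_i),
\end{equation*}
where the last iso is the Thom isomorphism (valid without orientability hypotheses over $\mathbb{Z}_2$, and well-defined because $\mathrm{ind}(\mathcal{C}_i)<\infty$ by Remark~\ref{finiteindex}). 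In particular $b(X_i,X_{i-1})=b(\mathcal{C}_i)$.

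Finally I would iterate the elementary inequality extracted from the long exact sequence of the pair $(X_i,X_{i-1})$: for any pair $(A,B)$ one has $b(A)\leq b(B)+b(A,B)$. Applying this at each stage yields
\begin{equation*}
b(X^s)=b(X_N)\;\leq\;b(X_{N-1})+b(\mathcal{C}_N)\;\leq\;\cdots\;\leq\;\sum_{i=1}^{N}b(\mathcal{C}_i)\;=\sum_{\{C\,|\,f(C)\leq s\}}b(C),
\end{equation*}
which is the claim. The only step that really requires care is the identification $b(X_i,X_{i-1})=b(\mathcal{C}_i)$: one must check that Theorem~\ref{bott} produces a genuine good pair (so that excision applies) and that the Thom isomorphism is available in the possibly infinite‑dimensional ambient space. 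Both points are standard once one notes that $E_{\mathcal{C}_i}^-$ is a finite‑rank vector bundle over a compact base, so the Thom isomorphism reduces to the classical finite‑dimensional case.
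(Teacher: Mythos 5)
Your proof is correct and follows essentially the same route as the paper: the paper states the inequality as the evaluation at $t=1$ of the comparison between the Morse--Bott polynomial and the Poincar\'e polynomial, which is exactly the filtration-by-critical-values argument you spell out (Theorem~\ref{morsebottdeformation} between critical levels, Theorem~\ref{bott} plus the $\mathbb{Z}_2$ Thom isomorphism across each level, and subadditivity of Betti numbers from the long exact sequence of a pair). The only point worth stating slightly more carefully is the finiteness/discreteness of critical values below $s$: disjointness of the critical manifolds alone does not forbid accumulation, and one should invoke nondegeneracy of the Hessian in the normal directions (the Morse--Bott lemma) to conclude that each critical manifold is isolated in the critical set, after which your Palais--Smale argument closes the contradiction.
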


\section{The cohomology of the intersection of real quadrics}\label{appendixquadrics}

In this section we present useful results from \cite{AgrachevLerarioSystems, LerarioConvex, LerarioComplexity} for the study of the Betti numbers of the intersection of real quadrics on the sphere $S^n$. The motivating example is the case of the zero locus $Y$ of one single nondegenerate quadratic form $q$ on the sphere $S^n$: if $\ii^-(q)$ denotes the negative inertia index of $q$, then:
$$Y\simeq S^{\ii^-(q)-1}\times S^{n-\ii^{-}(q)}.$$
In particular we see that the knowledge of the \emph{index} function on the whole line spanned by $q$ in the space of all quadratic forms determines the topology (since by nondegeneracy $n-\ii^{-}(q)=\ii^{-}(-q)-1$).\\
More generally if we have $l$ quadratic forms $q_1, \ldots, q_l$ in $n+1$ variables, then we consider the function $\eta\mapsto \ii^{-}(\eta q);$ in the generic case this function is the restriction of the negative inertia index function to the span of $q_1, \ldots, q_l$ in the space of all quadratic forms. Although the general theory is more detailed, for our purposes we need explicit computations only in the case $l=2$ and in the general case it will suffice to have quantitative bounds on the topology of: $$Y=\{x\in S^n\,|\, q_1(x)=\cdots=q_l(x)=0\}.$$
In the case $l=2$ we consider a unit circle $S^1$ in $\R^2$ and the restriction $\ii^-|_{S^1}$; also for $j\geq 0$ we let:
$$P_{j}=\{\eta \in S^1\,|\, \ii^{-}(\eta q)\leq j\}.$$
The following formula (\ref{indexformula}) is proved in \cite{LerarioConvex} and relates the Betti numbers of $Y$ to the topology of the sets $P_j$; we denote by $\tilde{b}_j(Y)$ the rank of $\tilde{H}^j(Y;\mathbb{Z}_2)$. The general bound (\ref{bettibound}) for the topology of $Y$ in the case $l\geq 2$ is proved in \cite{LerarioComplexity}. The reader is referred to \cite{AgrachevLerarioSystems, LerarioConvex, LerarioComplexity} for more details.

\begin{propoa}\label{propo:quadrictopology}If $Y$ is the intersection of \emph{two} quadrics on the sphere $S^n$ and $0\leq j\leq n-3$, then:
\begin{equation}\label{indexformula}\tilde{b}_{j}(Y)=\tilde{b}_{n-j-1}(S^{n}\backslash Y)=b_{0}(P_{j+1},P_{j})+b_{1}(P_{j+2},P_{j+1}).\end{equation}
Moreover if $Y$ is defined by $l\geq 2$ quadratic equations on $S^n$, on $\mathbb{R}P^n$ or in $\R^n$, then:
\begin{equation}\label{bettibound}b(Y)\leq O(n)^{l-1}.\end{equation}
\end{propoa}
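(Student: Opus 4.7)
The plan is to deploy the Agrachev spectral sequence, which relates the cohomology of an intersection of real quadrics on the sphere to the cohomology of a ``positive-eigenspace'' auxiliary bundle over the sphere in the pencil of quadrics. Writing $q = (q_1,\ldots,q_l):\R^{n+1}\to \R^l$, one first sets $B^+ = \{(\eta,x) \in S^{l-1} \times S^n : \eta q(x) \geq 0\}$ and recalls the basic isomorphism
$$\tilde{H}^{j+l-1}(B^+, S^{l-1}) \cong \tilde{H}^{j}(Y)$$
obtained from a deformation of $S^n \setminus Y$ onto $\{x : \eta q(x) > 0 \text{ for some } \eta\}$ together with Alexander duality on $S^n$; the same duality yields $\tilde{b}_{j}(Y) = \tilde{b}_{n-j-1}(S^n \setminus Y)$. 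The first projection $\pi : B^+ \to S^{l-1}$ has fiber over $\eta$ homotopy equivalent to a sphere of dimension $n - \ii^-(\eta q)$, so the filtration of the base by the sublevel sets $P_j$ induces a Leray-type spectral sequence whose $E_2$ page is a direct sum over $j$ of the relative cohomologies $H^*(P_{j+1}, P_j)$, each concentrated in the single degree equal to the corresponding fiber dimension.

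For the first part of the statement, the assumption $l=2$ makes the base $S^{l-1}=S^1$ one-dimensional, so the spectral sequence has only two nontrivial columns $p=0,1$ and degenerates at $E_2$. The two entries contributing to $\tilde{H}^j(Y)$ are then $H^0(P_{j+1}, P_j)$ (from the column $p=0$, with fiber shift $j+1$) and $H^1(P_{j+2}, P_{j+1})$ (from the column $p=1$, with fiber shift $j+2$), summing to
$$\tilde{b}_j(Y) = b_0(P_{j+1}, P_j) + b_1(P_{j+2}, P_{j+1}).$$
The range restriction $0\leq j\leq n-3$ is precisely the range in which no contribution from the top cell of the fiber sphere collides with that of the base $S^1$, so no additional spectral-sequence corrections appear.

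For the general bound with $l\geq 2$, the spectral sequence has at most $l$ nontrivial columns, and $b(Y)$ is dominated by the sum of dimensions of all $E_2^{p,q}$ terms. Each $P_j$ is semialgebraic in $S^{l-1}\subset \R^l$, cut out by sign conditions on principal minors of $\eta Q$, which are polynomials of degree $O(n)$ in $\eta$. The main obstacle is that naively bounding $b(P_j) = O(n)^{l-1}$ via the Ole\u{i}nik-Petrovsky-Thom-Milnor inequality and multiplying by the $O(n)$ possible values of $j$ yields only $O(n)^l$. The required saving of one power of $n$ comes from treating all $P_j$ simultaneously as a single semialgebraic object, for instance $T = \{(\eta,t) \in S^{l-1}\times [0,n+1] : \ii^-(\eta q) \leq t\}$, and applying the polynomial Betti bound to $T$ at once; being $l$-dimensional in the $\eta$-variables and sliced by one extra parameter of bounded complexity, $T$ contributes $O(n)^{l-1}$ in total. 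This is the technical core of \cite{LerarioComplexity}, which I would invoke as a black box rather than reprove here.
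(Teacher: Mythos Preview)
The paper does not actually prove this proposition: it is stated in Appendix~\ref{appendixquadrics} as a quotation of results from \cite{LerarioConvex} (for formula~\eqref{indexformula}) and \cite{LerarioComplexity} (for the bound~\eqref{bettibound}), with the reader referred to \cite{AgrachevLerarioSystems, LerarioConvex, LerarioComplexity} for details. So there is no ``paper's own proof'' to compare against; the relevant comparison is with the approach in those references.

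Your sketch is a faithful outline of that approach. The auxiliary set $B^+$, the Alexander duality step giving $\tilde b_j(Y)=\tilde b_{n-j-1}(S^n\setminus Y)$, the Leray-type spectral sequence for the projection $B^+\to S^{l-1}$ with the filtration by the $P_j$, and the degeneration at $E_2$ when $l=2$ (the base being one-dimensional forces all higher differentials to vanish) are exactly the ingredients of \cite{AgrachevLerarioSystems, LerarioConvex}. Your identification of the obstacle in the general bound---that a naive column-by-column estimate loses a factor of $n$---and of the remedy---treating the whole family $\{P_j\}_j$ as a single semialgebraic object of controlled degree---is precisely the point of \cite{LerarioComplexity}, which you rightly invoke as a black box. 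One small caution: the isomorphism $\tilde H^{j+l-1}(B^+,S^{l-1})\cong \tilde H^j(Y)$ and the exact bookkeeping of the degree shifts (which column contributes which $P_j$) require some care with conventions, and the range restriction $0\le j\le n-3$ is there to avoid boundary effects in the spectral sequence; your one-line justification of that range is a bit loose, but the substance is right.
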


It is possible to apply the above technique also in the case $Y$ is the intersection of quadrics on the unit sphere in some (infinite dimensional) Hilbert space $H$. The main differences for this infinite dimensional case are the following: $Y$ must be nonsingular; $\check{b}_i$ denotes the rank of the $i$-th \emph{Cech} cohomology group; the negative inertia index might be infinite for some $\eta\in S^1$, but these $\eta$ are already excluded by the condition $\ii^{-}(\eta)\leq j<\infty$. With these modification we have the following result from \cite{AgrachevQuadrics}; formula (\ref{indexformulainfinite}) is the analogue of (\ref{indexformula}), but the condition that $H$ is infinite dimensional allows to remove the restriction on the range for $j$.

\begin{teoa}\label{teo:quadrictopologyinfinite}Let $q_1, \ldots, q_l$ be \emph{continuous} quadratic forms on the Hilbert space $H$ and $Y$ be their (nondegenerate) common zero locus on the unit sphere. Then:
\begin{equation}\label{bettidirect}H_{*}(Y)=\varinjlim_{V\in \mathcal{F}}\{H_*(Y\cap V)\}.\end{equation}
where $\mathcal{F}$ denotes the family of all \emph{finite} dimensional subspace of $H$. Moreover in the case $l=2$:
\begin{equation}\label{indexformulainfinite}\tilde{b}_j(Y)=\check{b}_{0}(P_{j+1},P_{j})+\check{b}_{1}(P_{j+2},P_{j+1})\end{equation}
\end{teoa}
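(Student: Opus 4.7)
The plan is to reduce the infinite-dimensional statement to the finite-dimensional one (Proposition \ref{propo:quadrictopology}) by a finite-dimensional approximation argument, and then transport the identity (\ref{indexformula}) to the limit.

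First I would establish the direct-limit formula (\ref{bettidirect}). Order $\mathcal{F}$ by inclusion; for $V\subset V'$ in $\mathcal{F}$ the obvious inclusion $Y\cap V\hookrightarrow Y\cap V'$ gives a directed system, and I need to show the induced map $\varinjlim H_*(Y\cap V)\to H_*(Y)$ is an isomorphism. Surjectivity uses that any singular cycle in $Y$ has compact image, and compact subsets of the unit sphere of $H$ can be pushed arbitrarily close to a finite-dimensional subspace; combined with nondegeneracy of $Y$ (which provides, via the implicit function theorem applied to the quadratic map $q=(q_1,\ldots,q_l)$, a tubular neighborhood of $Y$ inside $S(H)$), such cycles can be deformed into some $Y\cap V$. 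Injectivity is analogous: a bounding chain in $Y$ is compact and can be approximated inside some larger $V'$. The nondegeneracy hypothesis is what makes these retractions possible: it guarantees that for $V$ large enough the finite-dimensional intersection $Y\cap V$ is itself a smooth manifold and sits inside $Y$ as a deformation retract of a neighborhood.

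Next, fix $l=2$ and $j\geq 0$. For every $V\in\mathcal{F}$ with $\dim V\geq j+4$, Proposition \ref{propo:quadrictopology} gives
\begin{equation*}
\tilde b_j(Y\cap V)=b_0(P_{j+1}^V,P_j^V)+b_1(P_{j+2}^V,P_{j+1}^V),
\end{equation*}
where $P_k^V=\{\eta\in S^1\mid \ii^-(\eta q|_V)\leq k\}$. I would then analyze how $P_k^V$ behaves as $V$ grows: since restricting a quadratic form to a subspace can only decrease its negative inertia index, the function $V\mapsto \ii^-(\eta q|_V)$ is monotone non-decreasing in $V$, so the sets $P_k^V$ form a \emph{decreasing} family of closed semialgebraic subsets of $S^1$, with intersection $P_k=\{\eta\mid \ii^-(\eta q)\leq k\}$. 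The continuity theorem for \v{C}ech cohomology of inverse limits of compact Hausdorff spaces then yields $\check H^*(P_k)=\varinjlim H^*(P_k^V)$, and similarly for the pairs $(P_{k+1}^V,P_k^V)$.

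Combining the two steps, the left-hand side $\tilde b_j(Y\cap V)$ stabilizes to $\tilde b_j(Y)$ by the first step (applied with the coefficient field $\mathbb{Z}_2$, which commutes with direct limits), while the right-hand side converges to $\check b_0(P_{j+1},P_j)+\check b_1(P_{j+2},P_{j+1})$, proving (\ref{indexformulainfinite}).

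The main technical obstacle I anticipate is the first step: one needs to verify cleanly that the inclusions $Y\cap V\hookrightarrow Y$ are, up to finite-dimensional ``thickenings'' in the nondegenerate direction, cofibrations whose colimit realizes $Y$ homotopically. The nondegeneracy assumption is essential here — without it, the local model for $Y$ near the sphere is not a smooth submanifold and the gradient-type deformations used to push a cycle into a finite-dimensional slice need not exist. Once that approximation is in place, the algebraic bookkeeping (monotonicity of the index, passage to \v{C}ech cohomology, and the unrestricted range of $j$ coming from the fact that $\dim V$ can be taken arbitrarily large) is formal.
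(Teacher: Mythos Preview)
Your sketch is correct and follows essentially the same route as the paper: tubular neighborhood from nondegeneracy to get the direct-limit formula, then apply the finite-dimensional index formula for each $V$, observe that the $P_j^V$ decrease to $P_j$, and invoke continuity of \v{C}ech cohomology. The one point the paper makes explicit and you leave to ``bookkeeping'' is the naturality step: under the finite-dimensional isomorphism $\tilde H_j(Y\cap V)\simeq H^0(P_{j+1}^V,P_j^V)\oplus H^1(P_{j+2}^V,P_{j+1}^V)$, the inclusion-induced maps on the left correspond to the restriction maps on the right (the paper cites \cite{AgrachevQuadrics} for this), which is what actually lets you pass to the limit on both sides simultaneously rather than just matching numbers.
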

 
 \begin{proof}[Sketch]
Since $Y$ is assumed to be nonsingular, then it has a tubular neighborhood $U$ in $H$ and $H_*(Y)\simeq H_*(U).$ In particular every singular chain in $U$ is homotopic to one whose image is contained in a finite dimensional subspace and (\ref{bettidirect}) follows.
To prove (\ref{indexformulainfinite}) we fix a $j\geq 0$; then using (\ref{indexformula}) we have:
$$\tilde{b}_{j}(Y\cap V)=b_{0}(P_{j+1}(V),P_{j}(V))+b_{1}(P_{j+2}(V),P_{j+1}(V))$$
where $V\subset H$ is a sufficiently big finite dimensional subspace (the condition required on the dimension is $\dim(V)-3\geq j$) and $P_j(V)=\{\eta\in S^1\,|\, \ii^{-}(\eta q|_{V})\leq j\}.$
Now the sets $\{P_j(V)\}_{V\in \mathcal{F}}$ are also partially ordered by inclusion: if $V_1\subset V_2$, then $P_j(V_2)\subset P_j(V_1)$. It is not difficult to show that under the isomorphism $\tilde{H}_j(Y\cap V)\simeq H^0(P_{j+1}(V),P_{j}(V))\oplus H^1(P_{j+2}(V),P_{j+1}(V))$ the inclusion morphism on the homology $H_j(Y\cap V_1)\to H_{j}(Y\cap V_2)$ is induced by the restriction morphism (see \cite{AgrachevQuadrics}):
\begin{equation}\nonumber
\bigoplus_{i=0,1}H^i(P_{j+i+1}(V_1),P_{j+i}(V_1))\to\bigoplus_{i=0,1}H^i(P_{j+i+1}(V_2),P_{j+i}(V_2))
\end{equation}
Since the sets $\{P_j(V)\}_{V\in \mathcal{F}}$ are Euclidean Neighborhood Retracts (being semialgebraic sets), then by the continuity property of Cech cohomology:
$$\tilde{H}_*(Y)=\varprojlim_{V\in \mathcal{F}}\{H^*(P_{j+1}(V),P_{j}(V))\}=\check{H}^*\big(\bigcap_{V\in \mathcal{F}}P_{j+1}(V), \bigcap_{V\in \mathcal{F}}P_{j}(V)\big).$$
Finally $P_j$ equals by construction $\bigcap_{V\in \mathcal{F}}P_{j}(V)$ and the conclusion follows.
\end{proof}

{\bf Acknowledgements.} The first author is supported by Grant of the Russian Federation for the State Support of Researches (Agreement No 14.B25.31.0029)


\begin{thebibliography}{9}

 \bibitem{AgrachevQuadrics} A.~A.~Agrachev: \emph{Topology of quadratic maps and hessians of smooth maps},  Itogi nauki. VINITI. Algebra. Topologiya. Geometriya, 1988, v.26, 85--124 (Translated in J. Soviet Math. 49, 1990, no. 3, 990Ã©1013).
 \bibitem{AgrachevBarilariBoscain} A.~A.~Agrachev, U.~Boscain, D.~Barilari: \emph{Introduction to Riemannian and sub-Riemannian geometry},     lectures notes, available at: http://www.cmapx.polytechnique.fr/~barilari/Notes.php         
\bibitem{AgrachevLerarioSystems} A.~A.~Agrachev, A.~Lerario: \emph{Systems of quadratic inequalities},     Proceedings of the London Mathematical Society, (2012) 105 (3).
\bibitem{BarilariBoscainGauthier}D.~Barilari, U.~Boscain, J.~P.~Gauthier: \emph{On 2-step, corank 2 sub-Riemannian metrics}, SIAM Journal of Control and Optimization, 50 (2012), no. 1, 559-582
\bibitem{BCR} J.~Bochnak, M.~Coste, M-F.~Roy: \emph{Real Algebraic Geometry}, Springer-Verlag, 1998. 
\bibitem{BoscainGauthier} U.~Boscain, J.~P.~Gauthier: \emph{On the Spherical Hausdorff Measure in Step 2 Corank 2 sub-Riemannian Geometry},     arXiv:1210.2615
\bibitem{Bott1}R.~Bott: \emph{Nondegenerate critical manifolds}, Annals of Mathematics , Second Series, Vol. 60, No. 2 (Sep., 1954), pp. 248-261
\bibitem{Bott2}R.~Bott: \emph{Lectures on Morse Theory, old and new} Bullettin of the American Mathematical Society, Volume 7, Number 2, September 1982 
\bibitem{chang} K.~Chang: \emph{Infinite dimensional Morse Theory and Multiple Solution Problems}, Birkh\"auser, 1993
\bibitem{Ge} Z. Ge: \emph{Horizontal path spaces and Carnot-Caratheodory metrics}, Pacific Journal of Mathematics, volume 161, no.2, 1993
\bibitem{Kato} T.~Kato: \emph{Perturbation theory for linear operators}, Springer, 1995
\bibitem{Klingenberg} W.~Klingenberg: \emph{Lectures on closed geodesics},  Springer, 1978
\bibitem{LerarioConvex}A.~Lerario: \emph{Convex Pencils of real quadratic forms}, Discrete and computational Geometry, Volume 48, Number 4 (2012), 1025-1047.
\bibitem{LerarioComplexity} A.~Lerario: \emph{Complexity of intersection of real quadrics and topology of symmetric determinantal varieties},     arXiv:1211.1444
\bibitem{Milnor} J. W. Milnor: \emph{Morse theory}, Princeton University Press
\bibitem{OnishchickVinberg} A.~L.~Onishchick, E.~B.~Vinberg: \emph{Lie groups and algebraic groups}, Springer, 1980
\bibitem{palais} R.~Palais: \emph{Morse theory on Hilbert manifolds}
\bibitem{Pansu}P.Pansu \emph{Metriques de Carnot-Caratheodory et quasi isometries des espaces symetriques de rang un}, Annals of Math. 129 , 1-60, 1989
\bibitem{Semmes} S. Semmes: \emph{An Introduction to Heisenberg Groups in
Analysis and Geometry},  Notices of the AMS, volume 50, number 6. 
\end{thebibliography}
\end{document}